\DeclareMathOperator{\prox}{Prox}
\newtheorem{ass}[theorem]{Assumption}
\newtheorem{example}[theorem]{Example}
\newtheorem{algo}[theorem]{Algorithm}
\newcommand{\h}[1]{\mathbf{#1}}
\DeclareMathOperator{\Rbb}{\mathbb{R}}
\DeclareMathOperator{\R}{\mathbb{R}}
\DeclareMathOperator*{\proj}{Proj}
\DeclareMathOperator*{\dom}{dom}
\DeclareMathOperator*{\inte}{int}
\newcommand{\nn}{\nonumber}
\algrenewcommand\algorithmicrequire{\textbf{Input:}}
\crefname{hypothesis}{Hypothesis}{Hypotheses}
\title{A full splitting algorithm  for fractional programs with structured numerators and denominators \thanks{Submitted to the editors DATE.
\funding{The research of RIB has been partially supported by the Austrian Science Fund (FWF), project number W1260-N35. The research of MT has been partially supported by the Natural Science Foundation of China (No.
12471289),  by the China Scholarship Council (202006195015) and it has been conducted during her research visit at the University of Vienna in 2023.}}}
\author{Radu Ioan Bo\c{t} \thanks{Faculty of Mathematics, University of Vienna, A-1090 Vienna, Austria, \email{radu.bot@univie.ac.at}.}
\and
Guoyin Li \thanks{Department of Applied Mathematics, University of New South Wales, Sydney 2052, Australia, \email{g.li@unsw.edu.au}.}
\and
{Min Tao} \thanks{School of Mathematics, National Key Laboratory for Novel Software Technology, Nanjing University, Nanjing, 210093, Republic of China, \email{taom@nju.edu.cn}}.}
\begin{document}

\maketitle

\begin{abstract}
In this paper, we
consider a class of nonconvex and nonsmooth fractional programming problems, that involve the sum of a convex, possibly nonsmooth function composed with a linear operator and a differentiable, possibly nonconvex function in the numerator and a convex, possibly nonsmooth function composed with a linear operator in the denominator. These problems have applications in various fields.  We propose an adaptive full-splitting proximal subgradient algorithm  that addresses the challenge of decoupling the composition of the nonsmooth component with the linear operator  in the numerator. We specifically evaluate the nonsmooth function in the numerator using its proximal operator of its
conjugate function. Furthermore, the smooth component in the numerator is evaluated through its gradient, and the nonsmooth in the denominator is managed using its subgradient. We demonstrate subsequential convergence toward an approximate lifted stationary point and ensure global convergence under the Kurdyka-\L ojasiewicz property, all achieved  without full-row rank assumptions on the linear operators.  We provide further discussions on {\it the tightness of the convergence results of the proposed algorithm and its related variants, and the reasoning behind aiming for an approximate lifted stationary point}.  We construct a series of counter-examples to show that the proposed algorithm and its variant might diverge when seeking exact solutions. A practical version  incorporating a nonmonotone line search is also developed to enhance its performance significantly. Our theoretical findings are validated through simulations involving limited-angle CT reconstruction and the robust sharp-ratio-type minimization problem.
 \end{abstract}
\vspace{-0.1cm}

 \begin{keywords}  structured fractional programs, full splitting algorithm, convergence analysis, lifted stationary points, Kurdyka-\L ojasiewicz property, nonmonotone line search\end{keywords}
\vspace{-0.1cm}

\begin{MSCcodes}
90C26, 90C32, 49M27, 65K05
\end{MSCcodes}

\vspace{-0.2cm}
\section{Introduction}\label{sec1}
In this paper, we consider the following class of nonsmooth and  nonconvex fractional programs:
\vspace{-0.1cm}
\begin{eqnarray}\label{PForm} \min_{\h x\in{\cal S}} F({\h x}):=\frac{g(A{\h x})+h({\h x})}{f(K{\h x})}, \end{eqnarray}
where ${\cal S}$ is a nonempty convex and compact subset of ${\mathbb R}^n$,  $f:{\mathbb R}^p\rightarrow{{\overline{\mathbb R}}:={\mathbb R}\cup\{+\infty\}}$ and $g: {\mathbb R}^s\rightarrow{\overline{\mathbb R}}$ are proper, convex and lower semicontinuous functions;  $A:{\mathbb R}^{n}\rightarrow{\mathbb R}^s$ and  $K: {\mathbb R}^{n}\rightarrow {\mathbb R}^p$ are linear operators; $h: {\mathbb R}^n \rightarrow {\mathbb R}$  is a (possibly nonconvex) differentiable function over an open set containing ${\cal S}$  and its derivative $\nabla h$ is Lipschitz continuous over this open set with a  Lipschitz constant $L_{\nabla h}$.   To ensure \eqref{PForm} is well-defined,  we assume for the denominator that $K{\h x} \in {\rm dom} f$ and $f(K{\h x})>0$ for all $\h x \in {\cal S}$. For more detailed assumptions, we direct the reader to our subsequent sections.

Problem (\ref{PForm}) falls into the category of single ratio fractional programming problems. However, it showcases a more intricate structure compared to, for instance, the problems discussed in \cite{BDL} and \cite{BC17}. In the specific scenario where the linear operators $A$ and $K$ are identity mappings (represented as $I$), the model (\ref{PForm}) simplifies to the problem addressed in \cite{LSZ}.

The model (\ref{PForm}) encompasses a variety of optimization problems stemming from diverse domains. These include the limited-angle CT reconstruction problem \cite{KS01,WTNL}, robust Sharpe ratio minimization \cite{CHZ11}, the first stage of the single-period optimal portfolio selection problem involving a risk-free asset \cite{Pang80}, the recently introduced scale-invariant sparse signal reconstruction problem \cite{Tao22,LSZ,ZengYuPong20}, and their respective extensions. Subsequently, we offer a few illustrative examples to demonstrate the nature of the model problem (\ref{PForm}).

(a)  The {\it limited-angle CT reconstruction problem} aims at reconstructing the true image from limited-angle scanning measurements, with prescribed known bounds. By representing an image as an $(n \times n)$ matrix, it can be mathematically formulated as
\begin{eqnarray}\label{CTmodel}
\min_{{\h x}\in{\cal B}}\frac{\tau\|\nabla {\h x}\|_1+\frac{1}{2}\| P{\h x}-{\h f}\|^2}{\|\nabla{\h x}\|_p},
\end{eqnarray}
where $1< p  < \infty$,  $P$ is the projection operator, ${\h f}$ the given measurement data and $\tau>0$ a regularization parameter. The linear operator ${\nabla}:{\mathbb R}^{n\times n}\mapsto {\mathbb R}^{n\times n}\times {\mathbb R}^{n\times n}$ denotes the discrete gradient operator defined as ${\nabla {\h v}} =\left(\nabla_{\h x}{\h v},\nabla_{\h y}{\h v}\right),$ where ${\nabla}_{\h x},{\nabla}_{\h y}:{\mathbb R}^{n\times n}\mapsto {\mathbb R}^{n\times n}$ are the forward horizontal
and vertical difference operators, respectively. Regarding the box constraint  ${\cal B} := [{\h l}, {\h u}] \subseteq \mathbb{R}^{n \times n}$, representing the prescribed bounds for the true image to be reconstructed \cite{WTNL}, we assume that ${\cal B} \cap {\text{span}}({\bf E}) =\emptyset$, where ${\bf E}$ is the matrix with all entries equal to one.
By identifying the matrix space $\mathbb{R}^{n \times n}$ as the
Euclidean space $\mathbb{R}^{n^2}$, problem \eqref{CTmodel} can be written as a special case of
 (\ref{PForm}) with
$g({\h x}) :=\tau\|{\h x}\|_1$, $f({\h x}):=\|{\h x}\|_p$, $A=K=\nabla$, $h({\h x}):=\frac{1}{2}\| P{\h x}-{\h f}\|^2$, and ${\cal S}:= {\cal B}$. Here, $\|\cdot\|_p$ denotes the usual  $\ell_p$-norm for $1 < p < \infty$, while for $p=2$ we will simply write $\|\cdot\|_2$ for $\|\cdot\|$.

(b) The {\it robust sharp-ratio-type optimization problem} under scenario data uncertainty, which arises in finance, takes the following form:
\vspace{-0.2cm} \begin{equation}\label{muleq}
\min_{{\h x}\in \Delta} \frac{\max_{1\le i\le m_1}\{r_i-{\h a}_i^\top {\h x}\}}{\max_{1\le i \le m_2}{\h x}^\top C_i{\h x}},
\end{equation}
where $\Delta =\{{\h x} \in \mathbb{R}^n \, | \, {\h e}^\top {\h x}=1,{\h x}\ge 0\}$ with ${\h e}=(1,\ldots,1) \in \mathbb{R}^n$, $({\h a}_i, r_i)\in{\mathbb R}^n\times {\mathbb R}$, $i=1,\ldots,m_1$, are such that $r_i-{\h a}_i^\top {\h x} \ge 0$ for all $\h x \in \Delta$, and $C_i$, $i=1,\ldots,m_2$, are positive definite matrices. The standard Sharpe ratio optimization problem without data uncertainty reads as (see \cite{CHZ11}) $\max_{\h x \in{ \Delta}}\frac{{\h a}^\top{\h x}-{r}}{\sqrt{{\h x}^\top C{\h x}}}.$
 Another closely related equivalent model is
$\max_{\h x \in{ \Delta}}\frac{{\h a}^\top{\h x}-{r}}{{{\h x}^\top C{\h x}}}$,
where  $C \in \R^{n \times n}$ is a symmetric positive definite matrix and $(\h a, r) \in \mathbb{R}^n \times \mathbb{R}$.
Here, without loss of generality, we assume that  ${\h a}^\top{\h x}-{r}\ge 0$ for all  $\h x \in \Delta$. Suppose that the data $(\h a,  r)$ and $C$ are subject to scenario uncertainty, that is, $(\h a, r) \in \mathcal{U}_1=\{(\h a_1, \overline{r}_1),\ldots, (\h a_{m_1}, \overline{r}_{m_1})\} \mbox{ and } C \in \mathcal{U}_2=\{C_1,\ldots,C_{m_2}\}, $
where $({\h a}_i, \overline{r}_i)\in{\mathbb R}^n\times {\mathbb R}$, $i=1,\ldots,m_1$, are such that
${\h a}_i^\top {\h x}- \overline{r}_i \ge 0$ for all ${\h x}\in \Delta$ and $C_i$, $i=1,\ldots,m_2$, are positive definite matrices.
Then, the robust counterpart of the above Sharpe ratio optimization problem is $$\displaystyle \max_{{\h x}\in \Delta} \min_{\substack{(a,r) \in \mathcal{U}_1,\\ C \in \mathcal{U}_2}} \frac{{\h a}^\top {\h x}-r}{{\h x}^\top C{\h x}}=\max_{{\h x}\in \Delta}\frac{\min_{1\le i\le m_1}\{{\h a}_i^\top {\h x}-\overline{r}_i\}}{\max_{1\le i \le m_2}{\h x}^\top C_i{\h x}},$$ which can be further equivalently rewritten as
$\displaystyle \min_{{\h x}\in \Delta} \frac{\max_{1\le i\le m_1}\{\overline{r}_i-{\h a}_i^\top {\h x}\}}{\max_{1\le i \le m_2}{\h x}^\top C_i{\h x}}$. By adding a positive constant if necessary (without affecting the solutions), we  obtain  (\ref{muleq}).

Direct verification shows that (\ref{muleq}) is a special case of our model problem (\ref{PForm}) with $f({\h x}_{\bf 1},\cdots,{\h x}_{m_2}):=\max_{1 \le i \le m_2}\|{\h x}_i\|^2_{2}$, $K: \h x \mapsto (C_1^{1/2}\h x,\cdots,C^{1/2}_{m_2} \h x)$, $g(\h x):=\|{\h r}-{\h x}\|_{\infty}$ with ${\h r}:=(r_1,\ldots,r_{m_1})$, $A: \h x \mapsto ({\h a}_1^\top \h x,{\h a}_2^\top \h x,\cdots,{\h a}_{m_1}^\top \h x)^\top$,
  $h({\h x})=0$ and ${\cal S} :=\Delta$.



The conventional approach to tackling single ratio fractional programming problems commonly involves utilizing Dinkelbach's method or its variants  \cite{DW67,IB83}. The recent monograph \cite{CuiPang} comprehensively explores Dinkelbach's algorithm, incorporating surrogation as a mechanism to overcome the inherent nonconvexity of the resultant subproblems. However, solving the problem (\ref{PForm})   via Dinkelbach's method (or its variants)
might prove to be expensive and challenging due to the presence of
linear compositions both in numerator and denominator.

For solving simple single ratio problems, where compositions of nonsmooth functions with linear operators do not occur, various splitting algorithms have been proposed in a series of recent works \cite{BDL,BDL23,BC17,LSZ,ZLSIAM}. These methods share the feature that at every iterative step, instead of invoking an inner loop aimed at solving the resulting Dinkelbach's scalarization of the fractional program, they execute only one iteration of a suitable splitting algorithm and update the sequence of function values.  On the other hand, direct adaptations of these techniques for solving (\ref{PForm}) often lead to double-loop algorithms again, because of the presence of
linear compositions.

Our objective is to develop a single-loop, fully splitting algorithm with global convergence guarantees under reasonable assumptions (such as the Kurdyka-\L{}ojasiewicz (KL)  property) for solving the problem (1.1) efficiently. Here, full splitting means that one relies on only the proximity operators of  $g$ or $g^*$, and $f$ or $f^*$.
To tackle this challenge, we take inspiration from recent works such as \cite{SB19,LSZ}, developing an {\it adaptive full splitting proximal subgradient algorithm with an extrapolated step} (Adaptive FSPS).   In this approach, the linear operator in the numerator is evaluated through forward assessments. Using the technique of conjugate unfolding, it is disentangled from the nonsmooth component, which is evaluated via its proximal operator. Furthermore, the smooth component in the numerator is evaluated through its gradient, the nonsmooth component in the denominator is managed using its subgradient, and the linear operator in the denominator is also assessed through forward evaluations.

The {\it adaptive strategy},  involves a {\it backtracking approach} implemented to ensure the preservation of positivity in the sequence of augmented function values. This property plays a crucial role in the convergence analysis. We incorporate an {\it extrapolated step} \cite{Luo2022,ZYZ22} to update the primal sequence, differing from the inertial steps in  \cite{BDL,BDL23}. This step fosters positivity in the sequence of augmented function values and enhances the convergence performance of the algorithm.
 We prove the whole sequential convergence to an exact lifted stationary point under the merit function with the KL property when $g$ is smooth. In the case of $g$  nonsmooth, we prove \textit{subsequential convergence} toward an approximate lifted stationary point and establish \textit{global convergence} under a different merit function with the KL property.
Additionally, we provide insight into why we turn to an approximate lifted stationary point when $g$ is nonsmooth.
 To substantiate this reasoning, we construct  counter-examples demonstrating that the proposed algorithm, when pursuing an exact stationary solution, can easily diverge if $g$ is nonsmooth, when the smoothing parameter ($\gamma_k$) converges to zero.

Therefore, our theoretical results are tight under our proposed philosophy: to solve the problem (1.1) using a single-loop, fully splitting algorithm with global convergence under the KL property. Unlike existing primal-dual splitting approaches for nonconvex problems \cite{BCN19,HongLuoRazaviyayn16,LiPong15,PTSIAM,WangYinZeng15}, the global convergence of Adaptive FSPS is established without imposing full-row rank assumptions on the involved linear operators.
Additionally, we devise a practical version  by integrating a nonmonotone line search \cite{WNF09, ZLSIAM} to enhance its performance via adopting larger step sizes.

The rest of this paper is structured as follows. Section \ref{sec2} presents the necessary preliminary notions and results, while Section \ref{sec3} introduces the stationarity notions used in this paper and explores their relationships.
In Section \ref{sec4}, we introduce Adaptive FSPS and discuss the benefits of integrating the extrapolated step and adopting an adaptive version. Section \ref{sec5} and Section \ref{sec6} are dedicated to establishing subsequential and global convergence properties of Adaptive FSPS under the KL assumption, respectively. In Section \ref{sect7} we discuss several relevant aspects related to the conceptual algorithm FSPS.
Section \ref{sec8} introduces a practical adaptation of Adaptive FSPS by integrating nonmonotone line search, aiming to reduce dependence on unknown parameters, and conduct numerical experiments demonstrating the promising performance. Lastly, Section \ref{sec9} contains our conclusions.

\section{Preliminaries and calculus rules}\label{sec2}
 Finite-dimensional spaces within the paper will be equipped with the Euclidean norm, denoted by $\|\cdot\|$, while $\langle \cdot, \cdot \rangle$ will represent the Euclidean scalar product. Given a set ${\cal C} \subseteq {\mathbb R}^n$,  ${\text{ri}}({\cal C})$ and ${\text{int}}({\cal C})$ denote its {\it relative interior} and its {\it interior}, respectively. The function $\iota_{\cal C} : {\mathbb R}^n \rightarrow {\overline{\mathbb R}}:={\mathbb R}\cup\{+\infty\}$, defined by $\iota_{\cal C}({\h x}) = 0$, for ${\h x} \in {\cal C}$, and $\iota_{\cal C}({\h x}) = +\infty$, otherwise, denotes the {\it indicator function} of the set ${\cal C}$.

For a function $f:{\mathbb R}^n\rightarrow{\overline{\mathbb R}}$, we denote by $\dom f:=\{{\h x} \in {\mathbb R}^n: f({\h x}) < +\infty \}$ its {\it effective domain} and say that it is {\it proper} if $\dom f \neq \emptyset$. For $\overline{\h x} \in \dom f$, the set
\begin{equation*}
		{\hat\partial} f({\overline{\h x}}):=\left\{{\h v} \in {\mathbb R}^n :{\liminf \limits_{\h x\to{\overline{\h x}}\;{\h x}\neq {\overline{\h x}}}} \frac{f(\h x)-f(\overline{\h x})-\langle {\h v},{\h x}-{\overline{\h x}}\rangle}{\|{\h x}-\overline{\h x}\|}\ge 0\right\}
\end{equation*}
is the so-called {\it Fr\'{e}chet subdifferential} of $f$ at $\overline{\h x}$. The {\it limiting subdifferential} of $f$ at $\overline{\h x}$ is defined as
\begin{equation*}
		\partial f({\overline{\h x}}):=\left\{{\h v} \in \Rbb^n \! : \! \exists\; \{{\h x}^k\} \rightarrow {\overline{\h x}}, \ f({\h x}^k)\rightarrow f(\overline{\h x}), \{{\h v}^k \} \rightarrow{\h v} \ \text{as} \ k \rightarrow +\infty, \
 {\h v}^k\in{\hat\partial} f({\h x}^k) \right\}.
		\end{equation*}
If $f$ is  proper, convex and lower semicontinuous function and $\varepsilon \geq 0$, we denote by
\begin{equation}
\partial_{\varepsilon} f (\overline{\h x}) := \{{\h v} \in \Rbb^n : f({\h x}) \geq f(\overline{\h x}) + {\h v}^\top ({\h x} - \overline{\h x}) - \varepsilon \ \forall {\h x} \in \Rbb^n \}
\end{equation}
the {\it $\varepsilon$-subdifferential} of $f$ at $\overline{\h x}$. It holds
${\h v} \in \partial_{\varepsilon} f(\overline{\h x})$ if and only if $ f^*({\h v}) + f(\overline{{\h x}}) - \langle {\h v}, \overline{\h x} \rangle \leq \varepsilon$, where
$f^* : \Rbb^n \rightarrow {\overline{\mathbb R}}$, $f^*({\h v}) = \sup_{{\h x} \in \Rbb^n} \{\langle{\h v},{\h x} \rangle -f({\h x}) \}$, denotes the {\it (Fenchel) conjugate function} of $f$. The {\it convex subdifferential} of $f$ at $\overline{\h x}$ is defined by $\partial f (\overline{\h x}):=\partial_{0} f (\overline{\h x})$. The {\it domain} of the convex subdifferential is defined as $\dom(\partial f):=\{{\h x} \in \R^n : \partial f({\h x}) \neq \emptyset\}$.
For a proper, convex lower semicontinuous function  $f:{\mathbb R}^n\rightarrow{\overline{\mathbb R}}$, its {\it proximal operator of modulus $\gamma >0$}
is defined as \cite{RockWets}
$$\prox_{\gamma f} : \Rbb^n \rightarrow \Rbb^n, \ \prox_{\gamma f}({\h x}) = \arg\min_{\h y} \Big\{ f({\h y}) + \frac{1}{2\gamma}\|{\h y}-{\h x}\|^2\Big\}.$$
We further define
$ f_{\gamma}(\h x):=f \Box\left( \frac{1}{2\gamma} \|\cdot\|^2\right)(\h x)=\inf_{\h y}\left\{f({\h y})+\frac{1}{2\gamma}\|{\h x}-{\h y}\|^2\right\}.$
A proper function $f:{\mathbb R}^n\rightarrow{\overline{\mathbb R}}$ is called {\it essentially strictly convex} if it is strictly convex on every convex subset of
$\dom(\partial f)$. 
For a proper, convex and lower semicontinuous function $f$, $f$ is essentially strictly convex  if and only if its conjugate $f^*$ is essentially smooth \cite{Rock70}.
 Given a linear operator $A: {\mathbb R}^n \rightarrow {\mathbb R}^m$, we denote by $A^*: {\mathbb R}^m \rightarrow {\mathbb R}^n$ its {\it adjoint operator}. 
Next, we review the Kurdyka-\L ojasiewicz (KL) property  \cite{ABRS,ABS,BDL07} and the concept of calmness \cite{RockWets}.

\begin{definition} \label{def:KL}
A proper and lower semicontinuous function
$f: \mathbb R^n\rightarrow \overline \R$ is said to satisfy the Kurdyka-\L ojasiewicz (KL) property at a point ${\hat {\h x}}\in {\text{\rm dom}}(\partial f)$ if there exist a constant $\mu \in(0,+\infty]$, an open neighborhood $U$ of ${\hat {\h x}},$ and a desingularization function $\phi:\;[0,\mu)\rightarrow[0,+\infty)$, which is continuous and concave, and continuously differentiable on $(0,\mu)$ with $\phi(0)=0$ and $\phi'>0$ on $(0,\mu)$, such that for every $\h x\in U$ with $f({\hat {\h x}})< f(\h x)< f(\hat {\h x}) +\mu$ it holds
		$\phi'(f(\h x)-f(\hat {\h x})){\text{\rm{dist}}}(\h 0,\partial f({\h x}))\ge 1.$
\end{definition}


\begin{definition}\label{calm}
A proper function $f: {\mathbb R}^n \rightarrow {\overline{\mathbb R}}$ is said to be {\it calm} at ${\h x}\in {\text{\rm dom}}f$
 if there exist $\varepsilon >0$ and $\kappa>0$ such that
$ | f({\h y})-f({\h x})|\le \kappa \|{\h y}-{\h x}\|$
for all ${\h y}\in B({\h x},\varepsilon):=\{{\h z} \in \R^n : \|{\h z} - {\h x} \| < \varepsilon \}$.
\end{definition}

The following provides the Fr\'{e}chet subdifferential formula of the ratio functions.
\vspace{-0.4cm}
\begin{lemma}\label{ratioC2}
Let $O \subseteq \R^n$ be an open set, and $f_1 : O \rightarrow \overline{\mathbb \R}$ and $f_2 : O \rightarrow \mathbb \R$ be two functions which are finite at ${\h x} \in O$ with $f_2({\h x})>0$. Suppose that $f_1$ is continuous at ${\h x}$ relative to
${\text{\rm dom}}f_1$, that $f_2$  is calm at ${\h x}$, and denote $\alpha_i:=f_i({\h x})$, $i=1,2$.

\noindent (i) Then
\begin{eqnarray}\label{Lem3:eq1} {\hat\partial} \left(\frac{f_1}{f_2}\right)({{\h x}}) =\frac{{\hat\partial} (\alpha_2 f_1-\alpha_1 f_2)({{\h x}})}{f_2({{\h x}})^2}.\end{eqnarray}
(ii) If, in addition, $f_2$ is convex  and $\alpha_1\ge0$, then
\begin{eqnarray}\label{Lem3:eq2}{\hat\partial} \left(\frac{f_1}{f_2}\right)({{\h x}}) \subseteq \frac{{\hat\partial} (\alpha_2 f_1)({{\h x}})-\alpha_1{\hat\partial} f_2({{\h x}})}{f_2({{\h x}})^2}.\end{eqnarray}
\end{lemma}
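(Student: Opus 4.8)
The plan is to derive \eqref{Lem3:eq1} directly from the definition of the Fr\'echet subdifferential, by expressing the difference quotient of $f_1/f_2$ at $\h x$ in terms of that of $\psi:=\alpha_2 f_1-\alpha_1 f_2$, and then to obtain \eqref{Lem3:eq2} from \eqref{Lem3:eq1} together with a Fr\'echet difference rule made available by the convexity of $f_2$. Throughout, I would use that calmness of $f_2$ at $\h x$ entails both continuity of $f_2$ at $\h x$ and a local bound $|f_2(\h y)-\alpha_2|\le \kappa\|\h y-\h x\|$; since $\alpha_2=f_2(\h x)>0$, this also guarantees $f_2>0$ on a neighborhood of $\h x$, so that $f_1/f_2$ is well defined there.

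For part (i), I would first record the elementary identity, valid for every $\h y$ near $\h x$ with $f_1(\h y)$ finite,
\[
\frac{f_1(\h y)}{f_2(\h y)}-\frac{\alpha_1}{\alpha_2}
=\frac{\alpha_2 f_1(\h y)-\alpha_1 f_2(\h y)}{\alpha_2 f_2(\h y)}
=\frac{\psi(\h y)-\psi(\h x)}{\alpha_2 f_2(\h y)},
\]
where $\psi(\h x)=\alpha_2\alpha_1-\alpha_1\alpha_2=0$. Points $\h y\notin\dom f_1$ send both $f_1/f_2$ and $\psi$ to $+\infty$ (here $\alpha_2>0$ and $f_2$ is finite valued), hence only drive both difference quotients to $+\infty$ and may be disregarded when checking the $\liminf\ge 0$ condition. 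Fixing $\h v\in\R^n$ and setting $\h w:=\alpha_2^2\h v$, I would split $\alpha_2 f_2(\h y)\langle \h v,\h y-\h x\rangle=\langle\h w,\h y-\h x\rangle+\alpha_2\big(f_2(\h y)-\alpha_2\big)\langle\h v,\h y-\h x\rangle$ to obtain
\[
\frac{(f_1/f_2)(\h y)-(f_1/f_2)(\h x)-\langle\h v,\h y-\h x\rangle}{\|\h y-\h x\|}
=\frac{1}{\alpha_2 f_2(\h y)}\left(\frac{\psi(\h y)-\psi(\h x)-\langle\h w,\h y-\h x\rangle}{\|\h y-\h x\|}-r(\h y)\right),
\]
with remainder $r(\h y):=\alpha_2\big(f_2(\h y)-\alpha_2\big)\langle\h v,\h y-\h x\rangle/\|\h y-\h x\|$.

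The step I expect to be the crux is controlling $r(\h y)$: calmness yields $|r(\h y)|\le \alpha_2\kappa\|\h v\|\,\|\h y-\h x\|\to 0$, while continuity of $f_2$ gives $\alpha_2 f_2(\h y)\to\alpha_2^2>0$, so the prefactor $1/(\alpha_2 f_2(\h y))$ stays between two positive constants and tends to $1/\alpha_2^2$. Since multiplying by a factor with a strictly positive limit and subtracting a null term preserve the nonnegativity of a $\liminf$ (a routine $\varepsilon$-argument using bounds $0<m\le 1/(\alpha_2 f_2(\h y))\le M$), the left-hand quotient has nonnegative $\liminf$ if and only if the $\psi$-quotient does. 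Equivalently, $\h v\in\hat\partial(f_1/f_2)(\h x)$ iff $\h w=\alpha_2^2\h v\in\hat\partial\psi(\h x)$, which is precisely \eqref{Lem3:eq1}.

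For part (ii), I would combine \eqref{Lem3:eq1} with the inclusion
\[
\hat\partial(\alpha_2 f_1-\alpha_1 f_2)(\h x)\subseteq \hat\partial(\alpha_2 f_1)(\h x)-\alpha_1\hat\partial f_2(\h x),
\]
into which the hypotheses $f_2$ convex and $\alpha_1\ge 0$ feed. Because $f_2$ is convex and calm at $\h x$, it is finite on a ball about $\h x$, so $\h x\in\inte(\dom f_2)$ and $\hat\partial f_2(\h x)=\partial f_2(\h x)\neq\emptyset$. Given $\h u\in\hat\partial(\alpha_2 f_1-\alpha_1 f_2)(\h x)$ and any $\h w\in\alpha_1\partial f_2(\h x)$, the convex subgradient inequality gives $\alpha_1 f_2(\h y)-\alpha_1 f_2(\h x)-\langle\h w,\h y-\h x\rangle\ge 0$ for all $\h y$; adding this nonnegative quantity to the difference quotient defining $\h u$ shows $\h u+\h w\in\hat\partial(\alpha_2 f_1)(\h x)$, whence $\h u=(\h u+\h w)-\h w\in\hat\partial(\alpha_2 f_1)(\h x)-\alpha_1\hat\partial f_2(\h x)$. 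Dividing through by $\alpha_2^2=f_2(\h x)^2$ via \eqref{Lem3:eq1} then yields \eqref{Lem3:eq2}. The continuity of $f_1$ relative to $\dom f_1$ enters only to ensure that $f_1/f_2$ is finite and the difference quotients are well behaved near $\h x$, thereby legitimizing the manipulations above.
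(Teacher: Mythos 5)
Your proof is correct, and it is genuinely more self-contained than the paper's, whose argument consists essentially of two citations: for (i) the paper only states that the proof is ``similar to'' \cite[Proposition 2.2]{ZLSIAM}, and for (ii) it invokes the Fr\'echet difference rule \cite[eq.~(1.6)]{MNY06}, applicable because $\hat\partial(\alpha_1 f_2)({\h x})\neq\emptyset$ when $f_2$ is convex and ${\h x}\in\inte(\dom f_2)$. Your part (i) carries out in full the computation the paper outsources: the algebraic identity tying the difference quotient of $f_1/f_2$ to that of $\psi:=\alpha_2 f_1-\alpha_1 f_2$, calmness of $f_2$ annihilating the remainder $r({\h y})=O(\|{\h y}-{\h x}\|)$, the prefactor $1/(\alpha_2 f_2({\h y}))$ staying between positive constants so that nonnegativity of the $\liminf$ is preserved in both directions, and the correct dismissal of points outside $\dom f_1$ (both quotients equal $+\infty$ there, since $\alpha_2>0$ and $f_2$ is finite-valued). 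Your part (ii) replaces the cited difference rule by a direct proof of exactly the special case needed: since ${\h w}\in\alpha_1\partial f_2({\h x})$ satisfies the global convex subgradient inequality, the extra quotient is pointwise nonnegative, so ${\h u}+{\h w}\in\hat\partial(\alpha_2 f_1)({\h x})$ and the inclusion follows; this is elementary and, if anything, uses slightly more than the general rule has available (pointwise rather than mere $\liminf$ nonnegativity). The paper's route buys brevity; yours buys a complete, checkable argument with identical structure. One small correction to your closing sentence: your argument never actually uses the continuity of $f_1$ relative to $\dom f_1$ --- finiteness of the quotients on $\dom f_1$ is automatic --- so that hypothesis (inherited from the cited proposition) simply goes unused along your route, which is harmless.
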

\begin{proof}
(i) The proof is similar to \cite[Proposition 2.2]{ZLSIAM}.
(ii) If $f_2$ is convex and $\alpha_1\ge0$,  then ${\hat{\partial}}(\alpha_1 f_2)({\h x})\neq {\emptyset}$ thanks to ${\h x}\in {\text{\rm int}}({\text{\rm dom}}f_2)$. According to \cite[eq. (1.6)]{MNY06}, this further leads to
${\hat\partial} (\alpha_2 f_1-\alpha_1 f_2)({{\h x}})\subseteq {\hat\partial} (\alpha_2 f_1)({\h x})-{\hat\partial}(\alpha_1 f_2)({{\h x}})={\hat\partial} (\alpha_2 f_1)({\h x})-\alpha_1{\hat{\partial}} f_2({{\h x}}). $
\end{proof}

\noindent Next, we present a lemma that will be useful in establishing approximate stationarity for the accumulation points of the sequence generated by the proposed algorithm.

  \begin{lemma}\label{epsappsol}
 Let $g:{{\mathbb R}^n}\rightarrow{\overline{\mathbb R}}$ be a proper, convex and lower semicontinuous  function, and ${\h w}\in{\text{\rm int}}({\text{\rm dom}}g)$.
Let $\varepsilon>0$ and ${\cal K}$ be a compact set such that
 $B({\h w},\varepsilon)\subseteq {\cal K} \subseteq {\text{\rm int}}({\text{\rm dom}}g)$ and $g$ is  Lipschitz continuous on  $\cal K$ with constant $\kappa >0$. Further, let ${{\h z}} \in \R^n $ be such that
 ${\text{\rm dist}}({\h w}, \partial g^*({{\h z}})):=\inf \{\|{\h w} - {\bm \eta} \| : {\bm \eta} \in \partial g^*({{\h z}})\}\le \varepsilon$.
 Then, one has
 \begin{eqnarray}\label{Lem5:con} {{\h z}}\in \partial_{\hat{\varepsilon}} g({\h w}),\end{eqnarray}
 where ${\hat\varepsilon}:=2\kappa\varepsilon$.
 Furthermore, if $g$ has full domain and $g$ is differentiable with $L_{\nabla g}$-Lipschitz continuous gradient ($L_{\nabla g} >0$), then \eqref{Lem5:con} holds for
 ${\hat\varepsilon}:=\frac{L_{\nabla g} }{2}\varepsilon^2$.
 \end{lemma}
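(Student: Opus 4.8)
The plan is to pass to the conjugate picture, where the $\hat\varepsilon$-subdifferential admits a clean certificate, and then to absorb the mismatch between ${\h w}$ and the point realizing the distance into either a Lipschitz bound or a descent-lemma bound. Since ${\rm dist}({\h w},\partial g^*({\h z}))\le\varepsilon<+\infty$, the set $\partial g^*({\h z})$ is nonempty, and being a convex subdifferential it is closed and convex; hence the infimum is attained at some ${\bm \eta}\in\partial g^*({\h z})$ with $\|{\h w}-{\bm \eta}\|\le\varepsilon$. As ${\cal K}$ is closed and contains the open ball $B({\h w},\varepsilon)$, it also contains its closure, so ${\bm \eta}\in{\cal K}$. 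The structural fact I rely on is the inversion rule recalled in Section~\ref{sec2}: for proper, convex, lower semicontinuous $g$ one has $g^{**}=g$, and ${\bm \eta}\in\partial g^*({\h z})$ is equivalent to the Fenchel equality $g({\bm \eta})+g^*({\h z})=\langle{\h z},{\bm \eta}\rangle$, i.e.\ to ${\h z}\in\partial g({\bm \eta})$.

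Using this identity I eliminate $g^*({\h z})$ from the certificate. Recall that ${\h z}\in\partial_{\hat\varepsilon}g({\h w})$ holds exactly when $g^*({\h z})+g({\h w})-\langle{\h z},{\h w}\rangle\le\hat\varepsilon$; substituting $g^*({\h z})=\langle{\h z},{\bm \eta}\rangle-g({\bm \eta})$ reduces the task to proving
\[
E:=g({\h w})-g({\bm \eta})+\langle{\h z},{\bm \eta}-{\h w}\rangle\le\hat\varepsilon .
\]
The subgradient inequality at ${\bm \eta}$ tested at ${\h w}$ already gives $E\ge 0$, which is the expected sign. In the Lipschitz case I estimate the two pieces separately: $g({\h w})-g({\bm \eta})\le\kappa\|{\h w}-{\bm \eta}\|\le\kappa\varepsilon$ because both points lie in ${\cal K}$, while $\langle{\h z},{\bm \eta}-{\h w}\rangle\le\|{\h z}\|\,\|{\bm \eta}-{\h w}\|\le\kappa\varepsilon$ by Cauchy--Schwarz combined with the bound $\|{\h z}\|\le\kappa$ for a subgradient of a $\kappa$-Lipschitz convex function; adding the two yields $E\le 2\kappa\varepsilon=\hat\varepsilon$.

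For the differentiable case the same reduction applies, but I replace the crude splitting by a single use of the descent lemma, which also produces the sharper constant. Here full domain and differentiability force $\partial g({\bm \eta})=\{\nabla g({\bm \eta})\}$, so ${\bm \eta}\in\partial g^*({\h z})$ gives ${\h z}=\nabla g({\bm \eta})$ and $E=g({\h w})-g({\bm \eta})-\langle\nabla g({\bm \eta}),{\h w}-{\bm \eta}\rangle$. The $L_{\nabla g}$-Lipschitz continuity of $\nabla g$ then gives $E\le\frac{L_{\nabla g}}{2}\|{\h w}-{\bm \eta}\|^2\le\frac{L_{\nabla g}}{2}\varepsilon^2=\hat\varepsilon$, as required.

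The step I expect to be delicate is the subgradient-norm bound $\|{\h z}\|\le\kappa$ in the Lipschitz case. It is standard when ${\bm \eta}$ lies in an open set on which $g$ is $\kappa$-Lipschitz, since one can probe the subgradient inequality at ${\bm \eta}$ in the direction ${\h z}/\|{\h z}\|$ against nearby points of that set. The subtlety is that the distance may be attained on the sphere $\|{\h w}-{\bm \eta}\|=\varepsilon$, so that ${\bm \eta}$ falls on the boundary of ${\cal K}$; there a subgradient originating from a kink just outside ${\cal K}$ need not be controlled by $\kappa$, and the estimate must instead exploit that ${\bm \eta}\in B({\h w},\varepsilon)\subseteq\inte({\cal K})$ whenever the nearest point is interior. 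The differentiable case sidesteps this difficulty completely, because the descent lemma is a global inequality and $\nabla g({\bm \eta})$ is the unique subgradient, so no boundedness of subgradients is invoked.
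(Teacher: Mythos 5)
Your proposal follows the paper's own proof essentially step for step: attain the distance at a point ${\bm \eta} \in \partial g^*({\h z})$ (the paper takes ${\bm \eta}$ to be the projection of ${\h w}$ onto this closed convex set), use the equivalence ${\bm \eta} \in \partial g^*({\h z}) \Leftrightarrow {\h z} \in \partial g({\bm \eta}) \Leftrightarrow g({\bm \eta})+g^*({\h z})=\langle {\h z},{\bm \eta}\rangle$ to reduce the claim ${\h z} \in \partial_{\hat\varepsilon} g({\h w})$ to the estimate $E := g({\h w})-g({\bm \eta})+\langle {\h z},{\bm \eta}-{\h w}\rangle \le \hat\varepsilon$, split $E$ into a Lipschitz term and a Cauchy--Schwarz term controlled by a subgradient-norm bound in the nonsmooth case, and invoke the descent lemma in the smooth case. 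So there is no methodological difference to report.

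The delicate step you single out at the end is, however, a genuine gap, and it is equally a gap in the paper's own proof, so you could not have repaired it by imitation. The paper opens by asserting $\sup\{\|{\bm \xi}\| : {\bm \xi}\in\partial g({\h w}+{\h u}),\ \|{\h u}\|\le \varepsilon\}\le\kappa$, justified by $-{\bm \xi}^\top {\h u}\le g({\h w})-g({\h w}+{\h u})\le \kappa\|{\h u}\|$; this inequality only bounds the component of ${\bm \xi}$ along $-{\h u}$, not $\|{\bm \xi}\|$. When $\|{\bm \eta}-{\h w}\|<\varepsilon$ your remark closes the argument: ${\bm \eta}$ lies in the open ball, one can test the subgradient inequality at ${\bm \eta}$ against the points ${\bm \eta}+t{\h z}/\|{\h z}\|$ for small $t>0$, which remain in $B({\h w},\varepsilon)\subseteq{\cal K}$, and $\|{\h z}\|\le\kappa$ follows. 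But when the distance is attained exactly on the sphere, the subgradient bound can fail and so can the conclusion with the constant $2\kappa\varepsilon$: take $n=1$, ${\h w}=0$, $\varepsilon=1$, ${\cal K}=[-1,1]$, $g(x)=\max\{0,M(x-1)\}$ with $M>2\kappa$, and ${\h z}=M$. Then $g\equiv 0$ on ${\cal K}$, so $g$ is $\kappa$-Lipschitz there; moreover $\partial g^*(M)=[1,+\infty)$, hence ${\rm dist}({\h w},\partial g^*({\h z}))=1=\varepsilon$; yet ${\h z}\in\partial_{2\kappa}g(0)$ would force $0=g(1)\ge M\cdot 1-2\kappa$, a contradiction. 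Consequently, both your argument and the paper's require a strengthening to cover the boundary case, e.g.\ assuming ${\rm dist}({\h w},\partial g^*({\h z}))<\varepsilon$ strictly, or assuming $g$ to be $\kappa$-Lipschitz on a compact set containing the larger ball $B({\h w},\varepsilon')$ for some $\varepsilon'>\varepsilon$; under either fix, your interior-point probing yields $\|{\h z}\|\le\kappa$ and the rest of your proof is complete. As you correctly note, the smooth case is immune, since the descent lemma is global and no subgradient bound is invoked there.
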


 \begin{proof} First,  as $B({\h w},\varepsilon)\subseteq {\cal K}$ and $g$ is  Lipschitz continuous on  $\cal K$ with constant $\kappa >0$, we observe that $\sup\{\|{\bm \xi}\| : {\bm \xi} \in \partial g({\h w}+{\h u}), \|{\h u}\|\le \varepsilon\}\le \kappa.$ 
For $\overline{\bm \eta}:= {\text{Proj}}_{\partial g^*({{\h z}})}({\h w})$, the projection of $\h w$ on $\partial g^*({{\h z}})$ (which exists and is unique), it holds $\|\overline {\bm \eta} -{\h w}\|\le \varepsilon$. Therefore, for $\overline{\h u}:=\overline {\bm \eta}-{\h w}$, we have $\|\overline {\h u}\|\le \varepsilon$, ${\h w}+ \overline {\h u}\in \partial g^*({\h z})$ or, equivalently,
 ${{\h z}}\in \partial g({\h w}+ \overline{\h u}).$
Next, we claim that
\begin{eqnarray}\label{lem4:eq2} g({\h w})-g({\h w}+\overline{\h u}) + \langle \overline{\h u}, {{\h z}} \rangle\le {\hat\varepsilon},\end{eqnarray}
where ${\hat\varepsilon}$ is defined in the statement of the lemma. Since
$\langle {\h w}+ \overline{\h u},{{\h z}} \rangle-g^*({\h z})=g({\h w}+\overline{\h u})$, thanks to the fact ${{\h z}}\in \partial g({\h w}+\overline{\h u})$, this is equivalent to $ g({\h w})+g^*({{\h z}})\le {\hat\varepsilon}+ \langle {\h w},{{\h z}}\rangle,$ which is nothing else than (\ref{Lem5:con}).
Now we will prove that \eqref{lem4:eq2} is true. By direct calculations, we have
$g({\h w})-g({\h w}+\overline{\h u})+\langle \overline{\h u}, {{\h z}} \rangle
\le \kappa \|\overline {\h u}\|+\kappa \|\overline{\h u}\| \leq 2\kappa\varepsilon={\hat \varepsilon}$.
If $\dom g = \R^n$, $g$ is differentiable and $\nabla g$ is Lipschitz continuous with Lipschitz constant $L_{\nabla g}  >0$, then ${{\h z}}=\nabla g({\h w}+\overline{\h u})$, and so, by the  usual Descent Lemma \cite[Lemma 1]{BST14},
$g({\h w})-g({\h w}+\overline {\h u}) + \langle \overline{\h u}, {{\h z}} \rangle \leq \frac{L_{\nabla g} }{2}\|{\overline{\h u}}\|^2 \leq \frac{L_{\nabla g} }{2}\varepsilon^2.$
\end{proof}

\section{Basic assumptions and stationary points of fractional programs}\label{sec3}

We introduce the basic assumptions regarding (\ref{PForm})  and present two versions of stationary points for fractional programs, examining their relationships.

\begin{ass}\label{ass1} Throughout this paper, we assume that\\
(a) ${\cal S} \subseteq {\mathbb R}^n$ is a nonempty convex and compact set;\\
(b) $g$ is a  proper, convex and lower semicontinuous function;\\
(c) $h$ is  differentiable with Lipschitz continuous gradient over an open set containing the compact set ${\cal S}$ with a  Lipschitz constant $L_{\nabla h}$;\\
(d) $f$ is  a proper, convex and lower semicontinuous function with   $K(\mathcal{S}) \subseteq {\rm int} ({\rm dom} f)$ and
 $f(K {\h x})>0$ for all ${\h x}\in{\cal S}$;\\
(e)  ${\cal S}\cap A^{-1} ({\text{\rm dom}}g)\neq \emptyset$  and $\alpha:=\inf_{\h x \in {\cal S}}\{g(A{\h x})+h({\h x})\}> 0.$
\end{ass}

\begin{remark}\label{trick}
The assumption ${\cal S}\cap A^{-1} ({\text{\rm dom}}g)\neq \emptyset$  is to guarantee
that the objective function $F$ is not identically $+\infty$. The second condition in Assumption \ref{ass1}(e) might seem restrictive than it is at first glance. Indeed, it can be enforced without loss of generality. To see this, note from Assumption \ref{ass1}(a)-(d) that  $\inf_{{\h x}\in{\cal S}} F({\h x}) > -\infty$. Then, we can equivalently reformulate the optimization problem by augmenting the objective with a suitable positive constant. This reformulation allows us to redefine the function $g$ and the linear operator $A$ to satisfy the requirement $\alpha > 0$. \footnote{Different choices of the constant $\alpha$ may impact the numerical performance in general.}
\end{remark}

\begin{definition} \label{statFL}  For the optimization problem (\ref{PForm}), we say that ${\overline{\h x}}\in {\mathbb R}^n$ is
\begin{itemize}
\item[(i)] a Fr\'{e}chet  stationary point if $0\in{\hat\partial}\left (\frac{g\circ A+h+\iota_{\cal S}}{f\circ K}\right)({\overline{\h x}})$;
\item[(ii)] a limiting  lifted stationary point  if
\begin{eqnarray*}
0\in \left(A^* \partial g(A{\overline{\h x}})+\nabla h(\overline{\h x})+\partial \iota_{\cal S}({\overline{\h x}})\right) f(K\overline{\h x})-(g(A\overline{\h x})+h(\overline{\h x}))K^* \partial f(K{\overline{\h x}}).\end{eqnarray*}
\end{itemize}
\end{definition}

From the definition, it is clear that any local minimizer ${\overline{\h x}}\in {\mathbb R}^n$  of (\ref{PForm})
 is a  Fr\'{e}chet  stationary point. On the other hand, if ${\overline{\h x}}\in {\mathbb R}^n$ is a Fr\'{e}chet  stationary point of (\ref{PForm}) such that $K{\overline{\h x}} \in {\text{\rm int}}({\text{\rm dom}}f)$, and either $\overline{\h x} \in {\rm ri} (\mathcal{S}) \cap A^{-1}{\rm ri}({\rm dom} g)$  or $\mathcal{S}$ is polyhedral and $\overline{\h x} \in  \mathcal{S} \cap A^{-1}{\rm ri}({\rm dom} g)$, then, according to  Lemma \ref{ratioC2}, ${\overline{\h x}}\in {\mathbb R}^n$ is also a limiting lifted stationary point of (\ref{PForm}).
 The following examples (see \cite[Example 3.1]{BDL}) illustrates that a limiting lifted stationary point may not  be a Fr\'{e}chet stationary point.

\begin{example}\label{ex3.6} Consider the  problem
$\min_{x\in[-1,1]}\frac{x^2+1}{|x|+1}.$
Then ${\cal S}=[-1,1]$, $g(x) =0$, $h(x) =x^2+1$ and $f({x})= |x|+1$, and $A=K=I$. For $\overline{ x} =0$, we have
$ \left(A^* \partial g(A{\overline{ x}})+\nabla h(\overline{ x})+\partial \iota_{\cal S}({\overline{ x}}) \right) f(K\overline{ x})-(g(A\overline{ x})+h(\overline{ x}))K^* \partial f(K{\overline{x}})=[-1,1]$.
Thus, $\overline{x}$ is a  limiting lifted stationary point.
On the other hand, using assertion (i) of Lemma \ref{ratioC2} for $\alpha_1 =\alpha_2=1$, we have
${\hat{\partial}}\left(\frac{(({\cdot})^2+1)+\iota_{[-1,1]}({\cdot})}{|\cdot|+1} \right)(\overline{ x})= \emptyset,$
therefore, $\overline{ x}$ is  not a Fr\'{e}chet  stationary point.
\end{example}




Next we introduce the notion of an approximate lifted stationary point for the problem (\ref{PForm}).

\begin{definition}\label{appsol} Given $\epsilon_1,\epsilon_2\ge 0$, we say that ${\overline{\h x}}\in \R^n$ is
a limiting $({\epsilon_1,\epsilon_2})$-lifted stationary point of the problem (\ref{PForm})  if there exists ${{\overline{\Psi}}} \in \mathbb{R}$ with $|{{\overline{\Psi}}}-(g(A\overline{\h x})+h(\overline{\h x}))|\le \epsilon_2$ such that
\begin{eqnarray*}
0\in \left (A^* \partial_{\epsilon_1} g(A{\overline{\h x}})+\nabla h(\overline{\h x})+\partial \iota_{\cal S}({\overline{\h x}}) \right)f(K\overline{\h x})-{\overline{\Psi}} \, K^*\partial f(K{\overline{\h x})}.\end{eqnarray*}
\end{definition}
If $\epsilon_1=\epsilon_2=0$, then this notion reduces to the one of a limiting lifted stationary point.

\section{Full splitting adaptive algorithm}\label{sec4}
\subsection{Conceptual algorithmic framework}\label{subsec41}

We first propose a {\it conceptual} algorithmic framework for solving (\ref{PForm}) which we call {\it full splitting proximal subgradient algorithm with  an extrapolated step} (FSPS).

Let $0<\beta<2$, the sequences of positive numbers $\{\gamma_k\}$ and $\{\delta_k\}$, $\theta_0>0$ and a given starting point $({\h x}^0, {\h z}^0, {\h u}^0)$.  For all $k \geq 0$, we consider the following update rule:
\begin{equation}\label{FSPSO}
\left \{
\begin{array}{rcl}
{\h y}^{k+1} & \in & \partial f(K{\h x}^k)\\[0.1cm]
 \h x^{k+1} & = & {\text{Proj}}_{\cal S}\left({\h u}^k+\frac{\theta_k}{\delta_k}K^* {\h y}^{k+1}-\frac{1}{\delta_k}\nabla h({\h x}^k)-\frac{1}{\delta_k}A^* {\h z}^k\right),\\[0.1cm]
{\h u}^{k+1} & := & (1-\beta) {\h u}^k+\beta {\h x}^{k+1},\\[0.1cm]
{\h{z}}^{k+1} & := &\arg\min_{\h z} \left[ g^*({\h z})- \langle A{\h x}^{k+1}, {\h z} \rangle+\frac{\gamma_k}{2}\|\h z\|^2\right],\\[0.1cm]
\theta_{k+1} & := & \displaystyle{\frac{{\rm\Psi}({\h x}^{k+1},{\h z}^{k+1},{\h u}^{k+1};\delta_k,{\gamma_k})}{f(K{\h x}^{k+1})}},
\end{array}
\right.
\end{equation}
\vspace{-0.2cm}
where
${\Psi}({\h x},{\h z},{\h u}, \delta,\gamma)\!:=\!\langle{\h z},A{\h x} \rangle-g^*({\h z})+h({\h x})+\iota_{\cal S}({\h x}) + \frac{\delta}{2}\|{\h x}-{\h u}\|^2 \!-\! \frac{\gamma}{2}\|{\h z}\|^2$.
\begin{remark}\label{remarkfsps}
Assumption \ref{ass1}(d) ensures that
$\partial f(K{\h x})\neq \emptyset$ when ${\h x}\in{\cal S}$. As we will see later in the adaptive version, the positive sequences $\{\gamma_k\}$ and $\{\delta_k\}$ will be chosen such that $0 < \gamma_{k+1} \leq \gamma_k$ and, respectively,
$ \delta_k:=2\nu+L_{\nabla h}+\frac{2\|A\|^2}{\gamma_{k}}$ for all $k \geq 0$, where $\nu>0.$
\end{remark}
\vspace{-0.2cm}

\begin{remark}\label{remark:4.2}
In the FSPS numerical scheme, updating ${\h z}^{k+1}$ amounts to computing the proximal operator of $\frac{1}{\gamma_k} g^*$, which can be done efficiently in many situations.
We refer to $\{\theta_{k}\}$ as the augmented function value sequence. In fact, the update of ${\h z}^{k+1}$ can be written as ${\h z}^{k+1}={\rm prox}_{g^*/\gamma_k}(A \h x^{k+1}/\gamma_k)= \nabla g_{\gamma_k}(A \h x^{k+1})$. So,
$$\theta_{k+1}=\frac{g_{\gamma_k}(A\h x^{k+1})+h(\h x^{k+1})+\frac{\delta_k}{2}\|\h x^{k+1}-\h u^{k+1}\|^2}{f(K \h x^{k+1})} \le F(\h x^{k+1})+ \frac{\frac{\delta_k}{2}\|\h x^{k+1}-\h u^{k+1}\|^2}{f(K \h x^{k+1})}.$$
 In particular, if $\beta=1$,  $\{\theta_k\}$ serves as a surrogate from below for the objective function sequence.
To establish the convergence of the algorithm, it is crucial to ensure the positivity of $\theta_k$. This is a non-trivial task and thus motivates us to develop an adaptive scheme to ensure this property.
\end{remark}
The following toy example illustrates the potential accelerating of using a large   $\beta$.
\begin{example}\label{ex2}  Consider the problem (\ref{PForm}) for  ${\cal S}={[0,1]^2}$, $A=K=I$, and $g,h,f:\mathbb{R}^2 \rightarrow \mathbb{R}$ given by
$g({\h x}) = \tau \|{\h x}\|_{1}$, $h({\h x}) = \frac{1}{2}\|B{\h x}-{\h b}\|^2$ and  $f({\h x}) = \|{\h x}\|$, respectively, where $B$ is a discrete cosine transform matrix, ${\h x}^{*} := (1,0)^{\top}$ is the true signal, ${\h b}:= B{\h x}^* =(
		0.7071,
		0.7071)^\top,$ and $\tau = 10^{-3}$. We tested FSPS for $\gamma_k := q^k$, with $q:=0.9999$, $\delta_k:=5+L_{\nabla h}+\frac{2}{\gamma_{k}}$ for $k \geq 0$, $\theta_0:=0.8053$, starting point given by ${\h u}^0 = {\h x}^0:=(0.2,1)^{\top}$ and ${\h z}^0:=(10^{-3},10^{-3})^{\top}$ and various choices for the extrapolation parameter $\beta \in \{0.2,0.6,1.0,1.4,1.8\}$. As a stopping criterion, we considered
${\rm RErr}(k):=\|{\h x}^{k} - {\h x}^{*}\|/\|{\h x}^{*}\| < 10^{-6}$.
Fig. \ref{RelMSE}  illustrates  the trajectories of the relative error ${\rm RErr}(k)$  and the objective function value generated by FSPS for different choices of $\beta$. It indicats
 that a larger value of $\beta$ might lead to a faster convergence speed.
\end{example}
\begin{figure}[htb!]\centering{
\begin{tabular}{cc}
		\includegraphics[scale = .28]{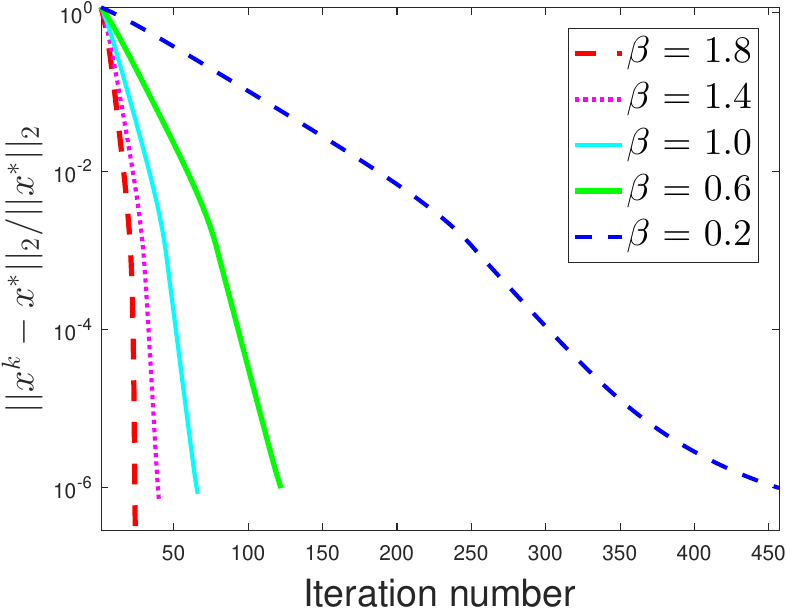} 
& \includegraphics[scale = .28]{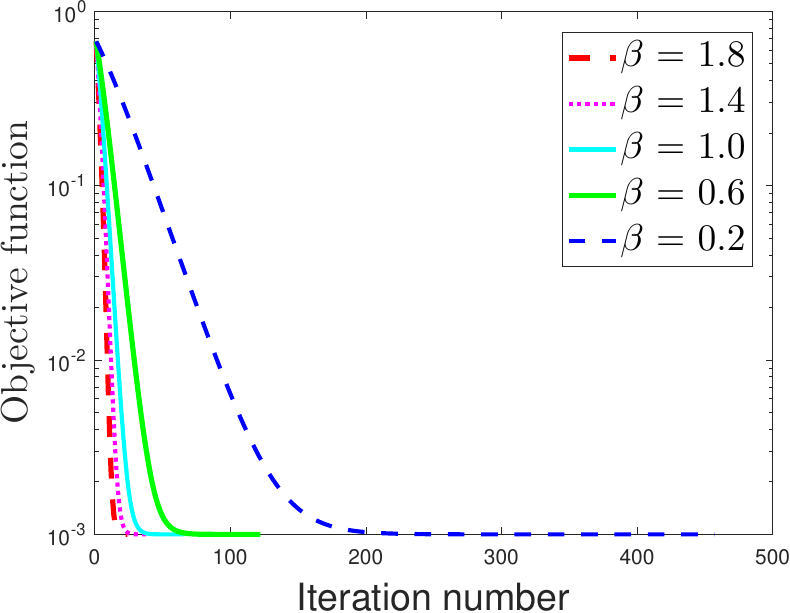}
		\end{tabular}
	}\vspace{-0.2cm} \caption{The evolution of the relative error (left) 
  and the objective function value (right)}.
	\label{RelMSE}\end{figure}

\subsection{Adaptive FSPS algorithm with extrapolation}\label{adaptive}
We present an adaptive version of FSPS, called the Adaptive FSPS algorithm, which determines the parameter sequences $\{\gamma_k\}$ and $\{\delta_k\}$ in a self-adapting manner and ensures the positivity of the sequence $\{\theta_k\}$.

{\small \begin{algo}[Adaptive FSPS algorithm]\label{subx:admm:Bes}
Let $0<\beta<2$, $\nu>0$, $0<q<1$,  $\delta_0,\theta_0>0$, $\gamma_0=1$ and $\varepsilon>0$, and given a starting point $({\h x}^0, {\h z}^0, {\h u}^0)$. For all $k \geq 0$, consider the following update rule:
\begin{align*}
& \text{Choose} \ {\h y}^{k+1}  \in\partial f(K{\h x}^k).\\
& \text{Update} \ {\h x}^{k+1} := \proj\nolimits_{\cal S}\left({\h u}^k+\frac{\theta_k}{\delta_k}K^* {\h y}^{k+1}-\frac{1}{\delta_k}\nabla h({\h x}^k)-\frac{1}{\delta_k}A^* {\h z}^k\right).\\
& \text{Update} \ {\h u}^{k+1} :=(1-\beta) {\h u}^k + \beta {\h x}^{k+1}.\\
& \text{Set} \ \gamma_{k,0}:=\gamma_k.\\
& \text{Find the smallest} \ j_{{k}}\in\{0,1,2,\ldots\} \ \text{such that for} \ \gamma_{k,j_k} :=\gamma_{k,0} q^{j_{k}} \ \text{and} \ \\
& {\h{z}}^{k+1,j_{k}}  :=\prox_{g^*/\gamma_{k,j_k}}\left(\frac{A{\h x}^{k+1}}{\gamma_{k,j_k}}\right)\\
& \text{it holds} \ \theta_{k+1}:=\frac{{\rm\Psi}({\h x}^{k+1},{\h z}^{k+1,{j_{k}}},{\h u}^{k+1}, \delta_k,\gamma_{k,j_k})}{f(K{\h x}^{k+1})}>0.\\
& \text{Update} \ \gamma_{k+1} :=\gamma_{k,j_k}.\\
& \text{Update} \ \delta_{k+1} := 2\nu +L_{\nabla h}+ \frac{2\|A\|^2}{\gamma_{k+1}}.\\
& \text{Update} \ {\h z}^{k+1}:={\h z}^{k+1,j_k}.\\
& \text{If} \  \|{\h z}^{k+1}\|>\min \left (\frac{\varepsilon}{\gamma_{k+1}},\sqrt{\frac{2\varepsilon}{\gamma_{k+1}}}\right ), \ \text{then}\\
			&\;\;\;\;\text{Update}\ \gamma_{k+1} :=\gamma_{k+1} q.\\&\;\;\;\;\text{Update}\ \delta_{k+1} := 2\nu +L_{\nabla h}+ \frac{2\|A\|^2}{\gamma_{k+1}}.\\
&\text{End If}
\end{align*}
\end{algo}}
The following assumption ensures the well-posedness of the algorithm.

\begin{ass} \label{ass2} It holds that $A({\cal S}) \subseteq \dom (\partial g)$ and
 there exists a constant $\ell>0$ such that
$
{\rm dist}(0, \partial g(A \h x)) \le \ell \mbox{ for all } \h x  \in {\cal S}.
$
\end{ass}

Note that this condition is automatically satisfied when the compact set $A(\mathcal{S})$ is a subset of the interior of $\dom g$.

\begin{lemma}[Well-definedness of Algorithm \ref{subx:admm:Bes}]\label{lem6} Suppose Assumptions \ref{ass1} and \ref{ass2} hold.  Then the following statements are true:
\begin{itemize}
\item[{\rm (i)}]
 It holds
 $\|{\h z}^{k+1}\| \le \ell + 1$ for all $k \geq 0$.
 \item[{\rm (ii)}]
The procedure of finding the smallest $j_k \in\{0,1,2,\ldots\}$ such that $\theta_{k+1}>0$ is executed in every iteration of Algorithm \ref{subx:admm:Bes} a finite number of times, and so the algorithm is well-defined. Moreover, $\gamma_{k+1}\le \gamma_k$ for all $k \geq 0$.
 \item[{\rm (iii)}] There exists a constant $\gamma >0$ and an index $K_0 \geq 0$ such that $\gamma_k = \gamma>0$,
$\delta_k = \delta:=2\nu +L_{\nabla h}+\frac{2\|A\|^2}{\gamma}$,  and
$\|{\h z}^{k+1}\|\le\min \left (\frac{\varepsilon}{\gamma},\sqrt{\frac{2\varepsilon}{\gamma}}\right)$  for all $k\ge K_0$.
\end{itemize}
 \end{lemma}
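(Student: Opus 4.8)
Here is the strategy I would follow to establish the three assertions of Lemma~\ref{lem6}.

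\emph{Part (i).} The plan is to identify $\h z^{k+1}=\h z^{k+1,j_k}$ with a genuine subgradient of $g$ and then bound it by the minimal-norm subgradient supplied by Assumption~\ref{ass2}. Writing $\gamma:=\gamma_{k,j_k}$ and applying the proximal characterization ``$\h y=\prox_{f,\kappa}(\h x)$ iff $\h x\in\h y+\kappa\partial f(\h y)$'' with $f=g^*$, the update $\h z^{k+1}=\prox_{g^*,1/\gamma}(A\h x^{k+1}/\gamma)$ is equivalent to $A\h x^{k+1}-\gamma\h z^{k+1}\in\partial g^*(\h z^{k+1})$, i.e.\ (via $\partial g^*=(\partial g)^{-1}$) to $\h z^{k+1}\in\partial g(\h p)$ with $\h p:=A\h x^{k+1}-\gamma\h z^{k+1}$. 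Since $\h x^{k+1}=\proj_{\cal S}(\cdots)\in{\cal S}$, Assumption~\ref{ass2} gives a minimal-norm element $\h s\in\partial g(A\h x^{k+1})$ with $\|\h s\|={\rm dist}(0,\partial g(A\h x^{k+1}))\le\ell$. Monotonicity of $\partial g$ applied to the pair $\h z^{k+1}\in\partial g(\h p)$, $\h s\in\partial g(A\h x^{k+1})$ together with $\h p-A\h x^{k+1}=-\gamma\h z^{k+1}$ yields $\|\h z^{k+1}\|^2\le\langle\h s,\h z^{k+1}\rangle\le\|\h s\|\,\|\h z^{k+1}\|$, hence $\|\h z^{k+1}\|\le\ell\le\ell+1$.

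\emph{Part (ii).} The decisive step is a closed-form simplification of the acceptance quantity. As $\h x^{k+1}\in{\cal S}$ we have $\iota_{\cal S}(\h x^{k+1})=0$ and $f(K\h x^{k+1})>0$ by Assumption~\ref{ass1}(d), so $\theta_{k+1}>0$ exactly when $\Psi>0$. Using $\h z^{k+1,j}\in\partial g(\h p)$, i.e.\ $g^*(\h z^{k+1,j})=\langle\h z^{k+1,j},\h p\rangle-g(\h p)$, together with $A\h x^{k+1}-\h p=\gamma\h z^{k+1,j}$, the scalar and quadratic terms in $\h z^{k+1,j}$ collapse and I obtain
\[
\Psi(\h x^{k+1},\h z^{k+1,j},\h u^{k+1},\delta_k,\gamma)=g_\gamma(A\h x^{k+1})+h(\h x^{k+1})+\tfrac{\delta_k}{2}\|\h x^{k+1}-\h u^{k+1}\|^2,
\]
where $g_\gamma$ is the Moreau envelope of $g$ with parameter $\gamma$. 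As $\gamma\downarrow0$ one has $g_\gamma(A\h x^{k+1})\uparrow g(A\h x^{k+1})$, so the right-hand side tends to $g(A\h x^{k+1})+h(\h x^{k+1})+\tfrac{\delta_k}{2}\|\h x^{k+1}-\h u^{k+1}\|^2\ge\alpha>0$ by Assumption~\ref{ass1}(e). Hence for $\gamma_{k,j}=\gamma_kq^{j}$ small enough the test is met, so the search for $j_k$ terminates after finitely many steps; monotonicity $\gamma_{k+1}\le\gamma_k$ is immediate since every update multiplies $\gamma_k$ by a power of $q\in(0,1)$.

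\emph{Part (iii).} Here I would sharpen the identity of part (ii) into a bound uniform in $k$. From the subgradient inequality $g(\h p)\ge g(A\h x^{k+1})+\langle\h s,\h p-A\h x^{k+1}\rangle$ with $\h p-A\h x^{k+1}=-\gamma\h z^{k+1,j}$, $\|\h s\|\le\ell$ and $\|\h z^{k+1,j}\|\le\ell$ from (i), the displayed formula gives $\Psi\ge\alpha-\ell^2\gamma$. Consequently, whenever $\gamma_k<\alpha/\ell^2$ the test already passes at $j=0$, forcing $j_k=0$ and no backtracking shrink; and since $\|\h z^{k+1}\|\le\ell$, once $\gamma_k\le\min(\varepsilon/\ell,2\varepsilon/\ell^2)$ the safeguard $\|\h z^{k+1}\|>\min(\varepsilon/\gamma_k,\sqrt{2\varepsilon/\gamma_k})$ also fails and no extra shrink occurs. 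Thus any $\bar\gamma\le\min(\alpha/(2\ell^2),\varepsilon/\ell,2\varepsilon/\ell^2)$ is a freezing threshold: $\gamma_k\le\bar\gamma$ implies $\gamma_{k+1}=\gamma_k$. Since $\{\gamma_k\}$ is nonincreasing and every genuine decrease contracts it by at least the factor $q$, a sequence that is not eventually constant would tend to $0$ and hence eventually fall below $\bar\gamma$, where it freezes — a contradiction. Therefore $\gamma_k\equiv\gamma>0$ and $\delta_k\equiv\delta$ for all $k\ge K_0$, and as no safeguard shrink is triggered in this regime its condition is violated, i.e.\ $\|\h z^{k+1}\|\le\min(\varepsilon/\gamma,\sqrt{2\varepsilon/\gamma})$ for $k\ge K_0$.

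The principal obstacle is part (iii): the subtle point is not that the nonincreasing positive sequence $\{\gamma_k\}$ converges, but that it becomes \emph{exactly} constant, which requires the $k$-uniform lower estimate $\Psi\ge\alpha-\ell^2\gamma$ (resting on the uniform subgradient bound $\ell$ of Assumption~\ref{ass2}) together with careful bookkeeping of the two independent shrinking mechanisms — the $\theta_{k+1}>0$ backtracking and the $\|\h z^{k+1}\|$-smallness safeguard — so as to exhibit a single positive freezing threshold $\bar\gamma$. The simplification of $\Psi$ into Moreau-envelope form in part (ii) is the technical device that renders both (ii) and (iii) transparent.
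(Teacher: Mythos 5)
Your proposal is correct, and it follows the same overall skeleton as the paper's proof (bound $\|{\h z}^{k+1}\|$ uniformly, bound $\Psi$ from below by $\alpha$ minus an $O(\gamma)$ term, then show that both shrinking mechanisms must eventually be inactive), but each step is executed with a genuinely different technical device. For (i), the paper compares the prox objective value at ${\h z}^{k+1,j_k}$ with its value at a subgradient $\tilde{\h z}^k \in \partial g(A{\h x}^{k+1})$ of norm at most $\ell+1$ and invokes Fenchel--Young, whereas you use the inversion rule $\partial g^* = (\partial g)^{-1}$ together with monotonicity of $\partial g$; this is shorter and even yields the marginally sharper bound $\|{\h z}^{k+1}\| \le \ell$ (legitimate, since $\partial g(A{\h x}^{k+1})$ is nonempty, closed and convex, so the minimal-norm subgradient exists). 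For (ii), the paper's prox comparison gives the one-sided estimate $\Psi \ge \alpha - \tfrac{\gamma}{2}\|\tilde{\h z}^k\|^2$ directly, while you establish the exact Moreau-envelope identity $\Psi = g_\gamma(A{\h x}^{k+1}) + h({\h x}^{k+1}) + \tfrac{\delta_k}{2}\|{\h x}^{k+1}-{\h u}^{k+1}\|^2$ and use $g_\gamma \uparrow g$ as $\gamma \downarrow 0$; your quantitative refinement $g_\gamma(A{\h x}^{k+1}) \ge g(A{\h x}^{k+1}) - \gamma\ell^2$ in (iii) recovers the paper's estimate up to constants, so the two routes carry the same information, but your identity is structurally more transparent. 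The most substantive difference is (iii): the paper argues by contradiction, extracting a strictly decreasing subsequence $\gamma_{k_s} \to 0$, observing that the safeguard eventually cannot fire and that each remaining decrease forces $\gamma_{\hat{k}_s} \ge 2\alpha/(\ell+1)^2$, a contradiction; you instead exhibit an explicit freezing threshold $\bar{\gamma} \le \min\left(\alpha/(2\ell^2),\, \varepsilon/\ell,\, 2\varepsilon/\ell^2\right)$ below which neither the backtracking test nor the safeguard can trigger a shrink, so that any $\gamma_k \le \bar{\gamma}$ is inherited by all later iterations, and non-constancy would force $\gamma_k \to 0$, hence freezing --- a contradiction. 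Your direct argument is constructive (it yields a computable lower bound on the terminal $\gamma$) and makes the interplay of the two shrinking mechanisms explicit, while the paper's subsequence argument avoids naming a threshold but is otherwise equivalent. One presentational point: when you cite ``(i)'' for $\|{\h z}^{k+1,j}\| \le \ell$ at every trial index $j$ (not just the accepted $j_k$), you should note that the monotonicity argument applies verbatim for every $\gamma_{k,j}>0$, since both your backtracking bound and your safeguard bound need it in that generality.
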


  \begin{proof}
  (i) From the construction of the algorithm, $\h x^k \in \mathcal{S}$ for all $k \geq 0$. So, by Assumption \ref{ass2}, for all $k \geq 0$ there exists
  ${\tilde {\h z}}^k\in \partial g(A{\h x}^{k+1})$ with $\|{\tilde {\h z}}^k\| \le \ell +1$.

Taking into account the definitions of ${\h z}^{k+1,j_k}$ and the proximal operator, for all $k \geq 0$, we have
\begin{eqnarray} \label{specialv} && \ g^*({\h z}^{k+1,j_k})- \langle A{\h x}^{k+1},{\h z}^{k+1,j_k}\rangle+\frac{\gamma_{k,j_k}}{2}\|{\h z}^{k+1,j_k}\|^2\nn\\
\le && \ g^*({\tilde {\h z}^k})- \langle A{\h x}^{k+1},{\tilde {\h z}^k}\rangle+\frac{\gamma_{k,j_k}}{2}\|{\tilde {\h z}}^{k}\|^2 =-g(A{\h x}^{k+1})+\frac{\gamma_{k,j_k}}{2}\|{\tilde {\h z}}^{k}\|^2\\
\le && \ g^*({\h z}^{k+1,j_k})- \langle A{\h x}^{k+1},{\h z}^{k+1,j_k}\rangle + \frac{\gamma_{k,j_k}}{2}\|{\tilde {\h z}}^{k}\|^2.\nn
\end{eqnarray}
Since $\gamma_{k,j_k}>0$, it follows that
 $\|{\h z}^{k+1,j_k}\|^2\le \|{\tilde {\h z}}^{k}\|^2 \le (\ell+1)^2,$
consequently, $\|{\h z}^{k+1}\| \le \ell +1$ for all $k \geq 0$.

(ii) Let $k \geq 0$ and $j_k \in \{0, 1, 2, ...\}$. From  \eqref{specialv} and Assumption \ref{ass1}(e), it holds
\begin{align*}
 & \ {\rm\Psi}({\h x}^{k+1},{\h z}^{k+1,{j_{k}}},{\h u}^{k+1}, \delta_k,\gamma_{k,j_k}) \\
= & \ \langle A{\h x}^{k+1}, {\h z}^{k+1,j_{k}}\rangle-g^*({\h z}^{k+1,j_{k}})+h ({\h x}^{k+1})+\frac{\delta_{k}}{2}\|{\h x}^{k+1}-{\h u}^{k+1}\|^2 -\frac{\gamma_{k,j_{k}}}{2}\|{\h z}^{k+1,j_{k}}\|^2 \\
 \geq & \ g(A{\h x}^{k+1})+h({\h x}^{k+1}) +\frac{\delta_{k}}{2}\|{\h x}^{k+1}-{\h u}^{k+1}\|^2-\frac{\gamma_{k,j_{k}}}{2}\|{\tilde {\h z}}^{k}\|^2
\geq \alpha - \frac{\gamma_{k,j_{k}}}{2}\|{\tilde {\h z}}^{k}\|^2.  \end{align*}
Since  $\|{\tilde {\h z}}^{k}\| \le \ell+1$, it is evident that after finitely many increases of $j_k$ with $1$ we obtain $\frac{\gamma_{k,j_k}}{2}\|{\tilde{\h z}}^k\|^2 < \frac{\alpha}{2}$ and, therefore, ${\rm\Psi}({\h x}^{k+1},{\h z}^{k+1,{j_{k}}},{\h u}^{k+1}, \delta_k,\gamma_{k,j_k}) > 0$. Consequently, Algorithm \ref{subx:admm:Bes} is well-defined. Finally, from the formulation of the algorithm we easily see that $\gamma_{k+1} \le \gamma_k$ for all $k \geq 0$.

 (iii) In order to prove the statement, it is sufficient to show that there exist $\gamma >0$ and $K_0 \geq 0$ such that
$\gamma_k = \gamma>0$ for all $k \ge K_0$. Assuming the contrary, there exists a strictly decreasing subsequence $\{\gamma_{k_s}\}$ such that $\gamma_{k_s}\rightarrow 0$
as $s\to+\infty$. As $\|{\h z}^{k_s}\| \le \ell +1$ for all $s \geq 0$,  there exists $s_0 \geq 0$ such that inequality in the ``If-End If'' statement is not verified for all $s \geq s_0$. Therefore, as $\{\gamma_{k_s}\}$ is strictly decreasing, for all $s \geq s_0$ there exists $\widehat{k}_s \in \mathbb{N}$ with $k_s \leq \widehat{k}_s \leq k_{s+1}$ such that $\theta_{\widehat{k}_s + 1,0} \le 0$. Using a similar argument as in {\rm (ii)}, this implies that
$\gamma_{\widehat{k}_s}=\gamma_{\widehat{k}_s,0}\ge\frac{2\alpha}{(\ell + 1)^2}>0$ for all $s \geq s_0$. The monotonicity of the sequence $\{\gamma_{k}\}$ leads to $\gamma_{\widehat{k}_s}\to 0$ as $s \rightarrow +\infty$, and further to a contradiction.
 \end{proof}

\begin{remark}\label{remass}
In the illustrative examples of (a) and (b) provided in Section \ref{sec1}, the functions $f$ and $g$ have a full domain; therefore Assumptions \ref{ass1}(a)-(d) are fulfilled. For example (a), it also holds $\alpha:=\inf_{x \in {\cal S}}g(A{\h x})+h({\h x})>0$ owing the assumption ${\cal B} \cap {\text{span}}({\bf E}) =\emptyset$. In example (b), we have $\alpha:=\inf_{x \in {\cal S}}g(A{\h x})+h({\h x}) \ge 0$; one could then augment the objective with a positive constant as described in Remark \ref{trick}.
\end{remark}

 \begin{remark}\label{remark:4.8} In the special case $\beta=1$, the adaptive FSPS algorithm can alternatively be interpreted using the Moreau envelope of $g$. In fact, in this case, similar to the discussions in Remark \ref{remark:4.2} and by Lemma \ref{lem6}~{\rm (iii)}, for all large $k$, one has $\theta_{k}=\frac{g_{\gamma}(A\h x^{k+1})+h(\h x^{k+1})}{f(K \h x^{k+1})}$, where $\gamma>0$ was given in Lemma \ref{lem6}. Thus, the update of $\h x^{k+1}$ can be interpreted as a proximal subgradient method applied to $F_{\gamma}(x):=\frac{g_{\gamma}(A\h x)+h(\h x)}{f(K \h x)}$ with a fixed but unknown constant $\gamma$. Interestingly, this interpretation does not extend to the general case of $\beta \neq 1$.
\end{remark}

\section{Convergence analysis of the Adaptive FSPS algorithm}\label{sec5}
We provide the convergence analysis for Algorithm \ref{subx:admm:Bes}. To simplify the presentation, we denote
${\bm W}^k:=({\h x}^k,{\h y}^k,{\h z}^k,{\h u}^k)$ for  $\forall k \geq 0.$

We first demonstrate that the iterative sequence generated by the proposed algorithm is bounded. Additionally, we show that any cluster point of the sequence is a limiting approximate stationary point of the underlying fractional program.

\begin{lemma}\label{cofpositive}
Suppose Assumption \ref{ass1} holds. Then, there exist two positive scalars $m$ and $M$ such that
$m< f(K{\h x}^k)\le M$ for all $k \geq 0$.
\end{lemma}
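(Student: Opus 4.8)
The plan is to reduce the statement to the Weierstrass extreme value theorem applied to the function ${\h x} \mapsto f(K{\h x})$, which I will argue is continuous and strictly positive on the compact set ${\cal S}$, after observing that every iterate stays in ${\cal S}$.

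First I would record that ${\h x}^k \in {\cal S}$ for all $k \geq 0$: for $k \geq 1$ this is immediate from the update ${\h x}^{k+1} = \proj_{\cal S}(\cdots) \in {\cal S}$, and for $k=0$ it is guaranteed by the choice of starting point (as already used in the proof of Lemma \ref{lem6}). Hence it suffices to bound $f(K{\h x})$ from above and away from zero uniformly over ${\cal S}$.

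Next I would establish continuity of ${\h x} \mapsto f(K{\h x})$ on ${\cal S}$. Since $K$ is a linear, hence continuous, operator and ${\cal S}$ is compact by Assumption \ref{ass1}(a), the image $K({\cal S})$ is a compact subset of ${\mathbb R}^p$. By Assumption \ref{ass1}(d) we have $K({\cal S}) \subseteq {\rm int}({\rm dom}\, f)$, and since $f$ is a proper, convex and lower semicontinuous function it is continuous (indeed locally Lipschitz) on ${\rm int}({\rm dom}\, f)$. Composing with the continuous operator $K$, the map ${\h x} \mapsto f(K{\h x})$ is therefore continuous on ${\cal S}$.

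Finally I would invoke the Weierstrass theorem: the continuous function ${\h x} \mapsto f(K{\h x})$ attains a finite maximum $M := \max_{{\h x} \in {\cal S}} f(K{\h x})$ and a finite minimum $m_0 := \min_{{\h x} \in {\cal S}} f(K{\h x})$ on the compact set ${\cal S}$. Because $f(K{\h x}) > 0$ for every ${\h x} \in {\cal S}$ by Assumption \ref{ass1}(d), the minimizer yields $m_0 > 0$. Setting $m := m_0/2 > 0$ then gives $m < m_0 \le f(K{\h x}^k) \le M$ for all $k \geq 0$, which is the claim. No genuine obstacle arises here; the only point requiring care is the continuity of $f \circ K$ on ${\cal S}$, which rests on the fact that a finite convex function is automatically continuous on the interior of its domain together with $K({\cal S})$ being a compact set lying strictly inside that interior, so that the pointwise positivity of $f$ on the compact set ${\cal S}$ upgrades to a strictly positive lower bound.
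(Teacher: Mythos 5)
Your proof is correct and follows essentially the same route as the paper: both derive uniform bounds for $f\circ K$ over the compact set ${\cal S}$ from the fact that the convex function $f$ is continuous (indeed locally Lipschitz) on ${\rm int}({\rm dom}\, f) \supseteq K({\cal S})$, which is exactly what the paper's citation of Rockafellar's Theorem 10.4 provides. The only cosmetic difference is that the paper obtains the strictly positive lower bound by a subsequence/lower-semicontinuity contradiction argument along the iterates, whereas you apply Weierstrass directly on ${\cal S}$; both rest on the same compactness and positivity assumptions from Assumption \ref{ass1}.
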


\begin{proof} First, $\sup_{{\h x}\in\cal S} f(K{\h x})<+\infty$ holds due to \cite[Theorem 10.4]{Rock70} and Assumption \ref{ass1}(d). Further, we have $\inf_{k\geq 0} f(K{\h x}^k)>0$. Supposing the contrary, there exists a subsequence $\{{\h x}^{k_j}\}$ such that ${\h x}^{k_j}\to \hat{\h x} \in S$ and $f(K{\h x}^{k_j})\to 0$ as $j\to+\infty$. Using the lower semicontinuity of $f$, this yields $f(K\hat{\h x}) \leq 0$ contradicting Assumption \ref{ass1}(d).
\end{proof}

As shown in Lemma \ref{lem6}, there exists $K_0 \geq 0$ such that $\gamma_k = \gamma$ and $\delta_k = \delta= 2\nu +L_{\nabla h}+\frac{2\|A\|^2}{\gamma}$ for all  $k\ge K_0$. Algorithm \ref{subx:admm:Bes} guarantees $\theta_k>0$, therefore,
${\Psi}({\h x}^{k},{\h z}^{k},{\h u}^{k}, \delta, \gamma)> 0$ for all $k \geq K_0+1$.

\begin{theorem}\label{PriTheo}
Suppose Assumption \ref{ass1} and Assumption \ref{ass2}  hold. Let $0<\beta<2$, $\gamma >0$,  and $K_0 \geq 0$ satisfy $\gamma_k = \gamma>0$,
$\delta_k = \delta:=2\nu +L_{\nabla h}+\frac{2\|A\|^2}{\gamma}$,  and
$\|{\h z}^{k+1}\|\le\min \left (\frac{\varepsilon}{\gamma},\sqrt{\frac{2\varepsilon}{\gamma}}\right)$  for $k\ge K_0$, as indicated by Lemma  \ref{lem6} (iii). Let
$$c_{1}:=\nu,\quad c_{2}:=\delta(2-\beta)/2\beta,\quad c_3:=\gamma/2.$$
Then, for all $k \geq K_0+1$, the following statements are true:
\vspace{0.1cm}
\begin{itemize}
\item[(i)]
${\Psi}({\h x}^{k+1},{\h z}^{k+1},{\h u}^{k+1}, \delta, \gamma)+\theta_k f(K{\h x}^k)-\theta_k\left( \langle K{\h x}^{k+1},{\h y}^{k+1} \rangle-f^*({\h y}^{k+1})\right)\\$
$\le \  {\Psi}({\h x}^{k},{\h z}^{k},{\h u}^{k}, \delta, \gamma)
-c_{1}\|{\h x}^k-{\h x}^{k+1}\|^2-c_{2}\|{\h u}^k-{\h u}^{k+1}\|^2-c_3\|{\h z}^k-{\h z}^{k+1}\|^2;$
\vspace{0.1cm}
 \item[(ii)]
 $ {\Psi}({\h x}^{k+1},{\h z}^{k+1},{\h u}^{k+1}, \delta, \gamma)-\theta_k f(K{\h x}^{k+1})\nn\\
\le
-c_1\|{\h x}^k-{\h x}^{k+1}\|^2-c_2\|{\h u}^k-{\h u}^{k+1}\|^2
-c_3\|{\h z}^{k}-{\h z}^{k+1}\|^2.$
 \end{itemize}
\end{theorem}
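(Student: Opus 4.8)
The plan is to obtain (ii) as a short corollary of (i), so the substance lies in the one-step estimate (i). For every $k\ge K_0+1$ the parameters are frozen at $\gamma,\delta$, hence by the very definition of $\theta_k$ one has $\theta_k f(K\h x^k)=\Psi^k$, where I abbreviate $\Psi^k:=\Psi(\h x^k,\h z^k,\h u^k,\delta,\gamma)$. Since $\h y^{k+1}\in\partial f(K\h x^k)$, the Fenchel--Young \emph{equality} gives $\langle K\h x^k,\h y^{k+1}\rangle-f^*(\h y^{k+1})=f(K\h x^k)$, so the two $\theta_k$-terms on the left of (i) collapse to $\theta_k\langle\h x^k-\h x^{k+1},K^*\h y^{k+1}\rangle$. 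Thus (i) is equivalent to the bound on the one-step change
\begin{equation*}
D:=\Psi^{k+1}-\Psi^k+\theta_k\langle\h x^k-\h x^{k+1},K^*\h y^{k+1}\rangle\le -c_1\|\h x^k-\h x^{k+1}\|^2-c_2\|\h u^k-\h u^{k+1}\|^2-c_3\|\h z^k-\h z^{k+1}\|^2,
\end{equation*}
and $D$ is the quantity I would estimate, splitting $\Psi$ into the dual part $P(\h x,\h z):=\langle\h z,A\h x\rangle-g^*(\h z)-\tfrac\gamma2\|\h z\|^2$, the smooth part $h$, and the proximal part $\tfrac\delta2\|\h x-\h u\|^2$ (the indicator terms drop out since $\h x^k,\h x^{k+1}\in{\cal S}$).

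The three ingredients are as follows. For the dual part, the subtlety is that the $\h z$-update \emph{maximizes} $P(\h x^{k+1},\cdot)$, so strong concavity only yields the lower bound $P(\h x^{k+1},\h z^{k+1})-P(\h x^{k+1},\h z^k)\ge\tfrac\gamma2\|\h z^{k+1}-\h z^k\|^2$, i.e.\ the wrong direction for an upper bound on $\Psi^{k+1}-\Psi^k$. I would resolve this by using that $\h z^k$ is itself a proximal point with the \emph{same} parameter $\gamma$ (Lemma \ref{lem6}(iii)), so $A\h x^k-\gamma\h z^k\in\partial g^*(\h z^k)$; plugging the subgradient inequality for $g^*$ at $\h z^k$ into $P(\h x^{k+1},\h z^{k+1})-P(\h x^k,\h z^k)$ and simplifying with $\langle a-b,b\rangle=\tfrac12\|a\|^2-\tfrac12\|b\|^2-\tfrac12\|a-b\|^2$ gives the \emph{upper} bound
\begin{equation*}
P(\h x^{k+1},\h z^{k+1})-P(\h x^k,\h z^k)\le\langle A^*\h z^k,\h x^{k+1}-\h x^k\rangle+\langle\h z^{k+1}-\h z^k,A(\h x^{k+1}-\h x^k)\rangle-\tfrac\gamma2\|\h z^{k+1}-\h z^k\|^2,
\end{equation*}
carrying the desired $-c_3\|\h z^{k+1}-\h z^k\|^2$. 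Second, the descent lemma gives $h(\h x^{k+1})-h(\h x^k)\le\langle\nabla h(\h x^k),\h x^{k+1}-\h x^k\rangle+\tfrac{L_{\nabla h}}2\|\h x^{k+1}-\h x^k\|^2$. Third, the optimality of the projection defining $\h x^{k+1}$, tested at $\h x=\h x^k\in{\cal S}$, reads $\langle A^*\h z^k+\nabla h(\h x^k)-\theta_k K^*\h y^{k+1},\h x^{k+1}-\h x^k\rangle\le-\delta\langle\h x^{k+1}-\h u^k,\h x^{k+1}-\h x^k\rangle$.

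Adding the three estimates together with the term $\theta_k\langle\h x^k-\h x^{k+1},K^*\h y^{k+1}\rangle$, all contributions linear in $\h x^{k+1}-\h x^k$ that involve $A^*\h z^k$, $\nabla h(\h x^k)$, $\theta_k K^*\h y^{k+1}$ are absorbed by the third estimate, leaving $D\le\langle\h z^{k+1}-\h z^k,A(\h x^{k+1}-\h x^k)\rangle-\tfrac\gamma2\|\h z^{k+1}-\h z^k\|^2+\tfrac{L_{\nabla h}}2\|\h x^{k+1}-\h x^k\|^2+\mathcal X$, where $\mathcal X:=-\delta\langle\h x^{k+1}-\h u^k,\h x^{k+1}-\h x^k\rangle+\tfrac\delta2\|\h x^{k+1}-\h u^{k+1}\|^2-\tfrac\delta2\|\h x^k-\h u^k\|^2$. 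Substituting the extrapolation identities $\h x^{k+1}-\h u^k=\tfrac1\beta(\h u^{k+1}-\h u^k)$ and $\h x^{k+1}-\h u^{k+1}=\tfrac{1-\beta}\beta(\h u^{k+1}-\h u^k)$, a direct expansion makes the cross terms cancel and yields $\mathcal X=-c_2\|\h u^{k+1}-\h u^k\|^2-\tfrac\delta2\|\h x^{k+1}-\h x^k\|^2$, which is where $c_2=\tfrac{\delta(2-\beta)}{2\beta}$ originates. Finally I would bound the residual cross term by the nonexpansiveness of the prox, $\|\h z^{k+1}-\h z^k\|\le\tfrac1\gamma\|A(\h x^{k+1}-\h x^k)\|\le\tfrac{\|A\|}\gamma\|\h x^{k+1}-\h x^k\|$, so $\langle\h z^{k+1}-\h z^k,A(\h x^{k+1}-\h x^k)\rangle\le\tfrac{\|A\|^2}\gamma\|\h x^{k+1}-\h x^k\|^2$; the coefficient of $\|\h x^{k+1}-\h x^k\|^2$ then equals $\tfrac{\|A\|^2}\gamma+\tfrac{L_{\nabla h}}2-\tfrac\delta2=-\nu=-c_1$, using precisely $\delta=2\nu+L_{\nabla h}+\tfrac{2\|A\|^2}\gamma$. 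This establishes (i).

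For (ii) I would insert $\theta_k f(K\h x^k)=\Psi^k$ into (i), cancel $\Psi^k$ on both sides, and arrive at $\Psi^{k+1}-\theta_k(\langle K\h x^{k+1},\h y^{k+1}\rangle-f^*(\h y^{k+1}))\le-c_1\|\h x^k-\h x^{k+1}\|^2-c_2\|\h u^k-\h u^{k+1}\|^2-c_3\|\h z^k-\h z^{k+1}\|^2$; since $\theta_k>0$ and the Fenchel--Young \emph{inequality} gives $\langle K\h x^{k+1},\h y^{k+1}\rangle-f^*(\h y^{k+1})\le f(K\h x^{k+1})$, the parenthesis may be replaced by $f(K\h x^{k+1})$ without increasing the left-hand side, which is exactly (ii). The step I expect to be the main obstacle is the dual estimate in the second paragraph: the ascent nature of the $\h z$-update pushes $\Psi$ the wrong way, and the argument succeeds only because the dual increment is re-expressed through the subgradient carried by the \emph{previous} proximal iterate $\h z^k$, after which the otherwise awkward cross term $\langle\h z^{k+1}-\h z^k,A(\h x^{k+1}-\h x^k)\rangle$ is precisely what the $\tfrac{2\|A\|^2}\gamma$ summand in $\delta$ is calibrated to absorb.
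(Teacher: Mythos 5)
Your proposal is correct and follows essentially the same route as the paper's proof: the same three estimates (optimality of the $\h x$-subproblem, the descent lemma for $h$, and the subgradient inequality for $g^*$ at the \emph{previous} prox point $\h z^k$ together with the prox-contraction bound $\|\h z^{k+1}-\h z^k\|\le\tfrac{\|A\|}{\gamma}\|\h x^{k+1}-\h x^k\|$), the same extrapolation identity producing $c_2$, and the same Fenchel--Young argument for (ii). The only differences are cosmetic: you use the first-order variational inequality of the projection plus exact quadratic expansion where the paper invokes $\delta$-strong convexity of the subproblem objective, and you bound the cross term $\langle\h z^{k+1}-\h z^k,A(\h x^{k+1}-\h x^k)\rangle$ by Cauchy--Schwarz plus prox-Lipschitzness where the paper uses Young's inequality plus monotonicity of $\partial g^*$ --- both yield the identical coefficient $\tfrac{\|A\|^2}{\gamma}\|\h x^{k+1}-\h x^k\|^2-\tfrac{\gamma}{2}\|\h z^{k+1}-\h z^k\|^2$.
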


\begin{proof} Let $k \geq K_0+1$.
(i) According to the properties of the projection, we have
 \begin{eqnarray*} \h x^{k+1} = \arg\min_{{\h x}\in\cal S}\left[\langle{\h z}^k, A{\h x} \rangle-\theta_k \langle K{\h x},{\h y}^{k+1}\rangle+\langle\nabla h({\h x}^k),{\h x}-{\h x}^k\rangle +\frac{\delta}{2}\|{\h x}-{\h u}^k\|^2\right].\end{eqnarray*}
The objective function of the above optimization problem is strongly convex with modulus $\delta$, therefore,
\begin{align*}
 & \ \langle{\h z}^k, A{\h x}^{k+1}\rangle+\langle {\h x}^{k+1}-{\h x}^k,\nabla h({\h x}^k)\rangle-\theta_k\langle K^*{\h y}^{k+1},{\h x}^{k+1} \rangle
+\frac{\delta}{2}\|{\h x}^{k+1}-{\h u}^k\|^2 \nonumber \\
\le & \ \langle{\h z}^k, A{\h x}^k\rangle-\theta_k\langle K^*
{\h y}^{k+1},{\h x}^k \rangle+\frac{\delta}{2}\|{\h x}^k-{\h u}^k\|^2-\frac{\delta}{2}\|{\h x}^k-{\h x}^{k+1}\|^2.
 \end{align*}
By combining the above inequality with the fact that
$$\langle {\h y}^{k+1},K({\h x}^k-{\h x}^{k+1})\rangle =f(K{\h x}^k)-\left(\langle K{\h x}^{k+1},{\h y}^{k+1} \rangle-f^*({\h y}^{k+1})\right),$$
it yields
{\small \begin{align}\label{desx:key2}
&\langle{\h z}^k, A{\h x}^{k+1}\rangle+\langle {\h x}^{k+1}-{\h x}^k,\nabla h({\h x}^k)\rangle+\theta_k\left[f(K{\h x}^k)- (\langle K{\h x}^{k+1},{\h y}^{k+1} \rangle -f^*({\h y}^{k+1}) )\right]
\nonumber\\
& +\frac{\delta}{2}\|{\h x}^{k+1}-{\h u}^k\|^2
\le \langle{\h z}^k, A{\h x}^k\rangle+\frac{\delta}{2}\|{\h x}^k-{\h u}^k\|^2-\frac{\delta}{2}\|{\h x}^k-{\h x}^{k+1}\|^2.
 \end{align}}
 Since $\nabla h$ is Lipschitz continuous with constant $L_{\nabla h}$, it holds that
\begin{eqnarray}\label{desx:key1}h({\h x}^{k+1})-h({\h x}^k)\le \langle {\h x}^{k+1}-{\h x}^k,
\nabla h({\h x}^k)\rangle +\frac{L_{\nabla h}}{2}\|{\h x}^{k+1}- {\h x}^k\|^2. \end{eqnarray}
 By substituting (\ref{desx:key1}) into (\ref{desx:key2}), we obtain
 \begin{eqnarray*} &&\langle{\h z}^k, A{\h x}^{k+1}\rangle +h({\h x}^{k+1})-h({\h x}^k)+\theta_k\left[f(K{\h x}^k)- (\langle K{\h x}^{k+1},{\h y}^{k+1} \rangle -f^*({\h y}^{k+1}) )\right]
\nn\\ &&+\frac{\delta}{2}\|{\h x}^{k+1}-{\h u}^k\|^2
\le \langle{\h z}^k, A{\h x}^k\rangle+\frac{\delta}{2}\|{\h x}^k-{\h u}^k\|^2-\frac{\delta-L_{\nabla h}}{2}\|{\h x}^k-{\h x}^{k+1}\|^2
 \end{eqnarray*}
or, equivalently,
\begin{align}\label{psides1} & \ \Psi({\h x}^{k+1},{\h z}^k,{\h u}^k, \delta, \gamma) +\theta_k\left[f(K{\h x}^k)- (\langle K{\h x}^{k+1},{\h y}^{k+1} \rangle -f^*({\h y}^{k+1}) )\right]\nn\\
 \le & \ \Psi({\h x}^{k},{\h z}^k,{\h u}^k, \delta, \gamma) -\frac{\delta-L_{\nabla h}}{2}\|{\h x}^{k+1}-{\h x}^k\|^2.\end{align}
According to the definition of the proximal operator, we have
$A{\h x}^{k}-\gamma{\h z}^{k}\in \partial g^*({\h z}^{k}).$  Thus,
$ -g^*({\h z}^{k+1})\le - g^*({\h z}^k)- \langle{\h z}^{k+1} -{\h z}^k, A{\h x}^{k}-\gamma{\h z}^{k}\rangle$, which, in combination with the identity
$-\frac{\gamma}{2}\|{\h z}^{k+1}\|^2 =-\frac{\gamma}{2}\|{\h z}^k\|^2-\gamma\langle{\h z}^{k+1}-{\h z}^k,{\h z}^k \rangle-\frac{\gamma}{2}\|{\h z}^{k+1}-{\h z}^k\|^2,$
gives
{\small \begin{align}\label{PriTheo:Eq1}
& \ \langle{\h z}^{k+1}, A{\h x}^{k+1} \rangle -g^*({\h z}^{k+1}) -\frac{\gamma}{2}\|{\h z}^{k+1}\|^2 \nn\\
\le & \ \langle{\h z}^{k}, A{\h x}^{k+1} \rangle -g^*({\h z}^{k}) -\frac{\gamma}{2}\|{\h z}^{k}\|^2  + \langle{\h z}^{k+1}-{\h z}^k,
 A{\h x}^{k+1}-A{\h x}^k \rangle-\frac{\gamma}{2}\|{\h z}^k-{\h z}^{k+1}\|^2\nn\\
 \le & \ \langle{\h z}^{k}, A{\h x}^{k+1} \rangle -g^*({\h z}^{k}) -\frac{\gamma}{2}\|{\h z}^{k}\|^2 +\frac{\|A\|^2}{2\gamma}\|{\h x}^{k+1}-{\h x}^k\|^2, \end{align}}
where the last inequality follows by the Young's inequality. Using $A{\h x}^{k+1}-\gamma{\h z}^{k+1}\in \partial g^*({\h z}^{k+1})$ and $A{\h x}^{k}-\gamma{\h z}^{k}\in \partial g^*({\h z}^{k})$, and the monotonicity of the subdifferential operator, it yields
$$\gamma\|{\h z}^k-{\h z}^{k+1} \|^2 \leq -\langle {\h z}^k-{\h z}^{k+1},A({\h x}^{k+1}-{\h x}^k)\rangle  \le \frac{\gamma}{2}\|{\h z}^k-{\h z}^{k+1}\|^2+\frac{\|A\|^2}{2\gamma}\|{\h x}^k-{\h x}^{k+1}\|^2,$$
and further, in combination with (\ref{PriTheo:Eq1}),
 \begin{align*}
 & \ \langle{\h z}^{k+1}, A{\h x}^{k+1} \rangle -g^*({\h z}^{k+1}) -\frac{\gamma}{2}\|{\h z}^{k+1}\|^2\nn\\
\le & \ \langle{\h z}^{k}, A{\h x}^{k+1} \rangle -g^*({\h z}^{k}) -\frac{\gamma}{2}\|{\h z}^{k}\|^2 +\frac{\|A\|^2}{\gamma}\|{\h x}^{k+1}-{\h x}^k\|^2-\frac{\gamma}{2}\|{\h z}^k-{\h z}^{k+1}\|^2.
\end{align*}
In other words,
\begin{equation}\label{PriTheo:Phiz}
{\Psi}({\h x}^{k+1},{\h z}^{k+1},{\h u}^{k}, \delta, \gamma)\le{\Psi}({\h x}^{k+1},{\h z}^{k},{\h u}^{k}, \delta, \gamma)+\frac{\|A\|^2}{\gamma}\|{\h x}^{k+1}-{\h x}^k\|^2-\frac{\gamma}{2}\|{\h z}^k-{\h z}^{k+1}\|^2.
\end{equation}
Using the extrapolation step, we get
\begin{align*}
\frac{\delta}{2}\|{\h u}^{k+1}-{\h x}^{k+1}\|^2
 =  \frac{\delta}{2}\|{\h u}^k-{\h x}^{k+1}\|^2-\frac{\delta(1-(1-\beta)^2)}{2\beta^2}\|{\h u}^k-{\h u}^{k+1}\|^2,
\end{align*}
which leads to
\begin{eqnarray}\label{PriTheo:Phiu}
{\Psi}({\h x}^{k+1},{\h z}^{k+1},{\h u}^{k+1}, \delta, \gamma)\le{\Psi}({\h x}^{k+1},{\h z}^{k+1},{\h u}^{k}, \delta, \gamma)-\frac{\delta(2-\beta)}{2\beta}\|{\h u}^k-{\h u}^{k+1}\|^2.
\end{eqnarray}
Finally, by adding (\ref{psides1}), (\ref{PriTheo:Phiz}) and \eqref{PriTheo:Phiu},  the assertion follows by using the definition of $\delta$.

(ii) Follows from (i) by using that $f(K{\h x}^{k+1}) \geq \langle K{\h x}^{k+1},{\h y}^{k+1} \rangle-f^*({\h y}^{k+1})$,
 $\theta_k >0$, and $\theta_k f(K{\h x}^k)={\Psi}({\h x}^{k},{\h z}^{k},{\h u}^{k}, \delta, \gamma)$.
\end{proof}

Let $\gamma$ and $\delta$ be the constants indicated in  Lemma \ref{lem6} (iii), and the merit function $\Pi: {\mathbb R}^n \times {\text {\rm dom}}g^* \times {\mathbb R^n}\rightarrow \overline{{\mathbb \R}}$ defined by
\begin{align*} \Pi ({\h x}, {\h z},{\h u})
= \frac{{\Psi}({\h x},{\h z},{\h u}, \delta,\gamma)}{f(K{\h x})}
= \frac{\langle{\h z},A{\h x} \rangle-g^*({\h z})+h({\h x})+\iota_{\cal S}({\h x}) + \frac{\delta}{2}\|{\h x}-{\h u}\|^2 \!-\! \frac{\gamma}{2}\|{\h z}\|^2}{f(K{\h x})}. \end{align*}

\begin{theorem}[Subsequential convergence]\label{subsequentialtoStat}
Suppose Assumption \ref{ass1} and Assumption \ref{ass2}  hold. Let  $0<\beta<2$, $\gamma >0$  and $K_0 \geq 0$ satisfy $\gamma_k = \gamma>0$,
$\delta_k = \delta:=2\nu +L_{\nabla h}+\frac{2\|A\|^2}{\gamma}$,  and
$\|{\h z}^{k+1}\|\le\min \left (\frac{\varepsilon}{\gamma},\sqrt{\frac{2\varepsilon}{\gamma}}\right)$  for $k\ge K_0$, as indicated by Lemma  \ref{lem6} (iii).  Let
  $\Omega$ be the set of the accumulation points of the sequence $\{{\bm W}^k\}$.  Then, the following statements are true:
\begin{itemize}
\item[(i)]  The sequence $\left \{\!\theta_k \!= \!\frac{{\Psi}({\h x}^k,{\h z}^k,{\h u}^k, \delta,\gamma)}{f(K{\h x}^k)} =  \Pi ({\h x}^k, {\h z}^k,{\h u}^k) \right \}$ is nonincreasing and there exists a  scalar $\overline{\theta}\ge 0$ such that $\lim_{k\to+\infty}\theta_k=\overline{\theta}$.
\item[(ii)] The sequence $\{{\bm W}^k\}$ is bounded.
    \item[(iii)] For every $(\overline{\h x},{\overline{\h y}},{\overline{\h z}},{\overline{\h u}}) \in \Omega$ it holds ${\rm\Pi}(\overline{\h x}, {\overline{\h z}},{\overline{\h u}})={\overline{ \theta}}$.
\item[(iv)]  If $K_{\varepsilon}:=\{{\h x} \, |\, {\rm{dist}}({\h x},A({\cal S}))\le \varepsilon\} \subseteq {\rm int}({\rm dom} g)$, then $g$ is nonsmooth and Lipschitz continuous on the compact set $K_{\varepsilon}$ with some Lipschitz constant $\kappa >0$. In this case, any accumulation point of the sequence $\{{\h x}^k\}$ is a
  limiting $\left(2\kappa\varepsilon,(2\kappa+1)\varepsilon\right)$-lifted approximate stationary point of (\ref{PForm}).

\item[(v)] It holds
\begin{eqnarray}\label{eqde} \lim_{k\to+\infty}\frac{\langle K{\h x}^{k+1}, {\h y}^{k+1} \rangle -f^*({\h y}^{k+1})}{f(K{\h x}^{k})}=1.\end{eqnarray}
Furthermore, there exists an index $K_1\ge K_0+1$ such that
\begin{equation}\label{K1}
0<m\le \langle K{\h x}^{k}, {\h y}^{k} \rangle -f^*({\h y}^{k}) \le f(K{\h x}^k)\le M \quad \forall k \geq K_1,
\end{equation}
where $m$ and $M$ are the bounds from Lemma \ref{cofpositive}.
\end{itemize}
\end{theorem}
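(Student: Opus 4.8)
The plan is to read off all five statements from the two descent estimates in Theorem~\ref{PriTheo}, which are available for $k\ge K_0+1$, where $\gamma_k\equiv\gamma$, $\delta_k\equiv\delta$ and $\|\h z^{k+1}\|\le\min(\varepsilon/\gamma,\sqrt{2\varepsilon/\gamma})$ by Lemma~\ref{lem6}(iii). For (i), I would substitute the identities $\theta_k f(K\h x^k)=\Psi(\h x^k,\h z^k,\h u^k,\delta,\gamma)$ and $\theta_{k+1}f(K\h x^{k+1})=\Psi(\h x^{k+1},\h z^{k+1},\h u^{k+1},\delta,\gamma)$ into Theorem~\ref{PriTheo}(ii) to obtain $(\theta_{k+1}-\theta_k)f(K\h x^{k+1})\le -c_1\|\h x^k-\h x^{k+1}\|^2-c_2\|\h u^k-\h u^{k+1}\|^2-c_3\|\h z^k-\h z^{k+1}\|^2\le 0$; since $f(K\h x^{k+1})>0$ and $\theta_k>0$ by construction, $\{\theta_k\}$ is nonincreasing and bounded below, hence converges to some $\overline\theta\ge0$. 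For (ii), I would telescope this inequality and use $m<f(K\h x^k)\le M$ (Lemma~\ref{cofpositive}) to conclude $\sum_k(c_1\|\h x^k-\h x^{k+1}\|^2+c_2\|\h u^k-\h u^{k+1}\|^2+c_3\|\h z^k-\h z^{k+1}\|^2)<+\infty$, so the successive differences of $\{\h x^k\},\{\h u^k\},\{\h z^k\}$ vanish. Boundedness then follows componentwise: $\{\h x^k\}\subseteq{\cal S}$ is compact, $\|\h z^k\|\le\ell+1$ (Lemma~\ref{lem6}(i)), the relation $\h x^{k+1}-\h u^k=\beta^{-1}(\h u^{k+1}-\h u^k)\to0$ keeps $\{\h u^k\}$ bounded near ${\cal S}$, and $\{\h y^{k+1}\}$ is bounded since these are subgradients of the convex function $f$ at points of the compact set $K({\cal S})\subseteq\inte(\dom f)$.

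For (iii), let $(\overline{\h x},\overline{\h y},\overline{\h z},\overline{\h u})\in\Omega$ be the limit of $\{\bm W^{k_j}\}$. Every term defining $\Psi$ is continuous along the sequence except $-g^*(\cdot)$, so the crux is to show $g^*(\h z^{k_j})\to g^*(\overline{\h z})$. Since $\h z^{k_j}=\prox_{g^*,1/\gamma}(A\h x^{k_j}/\gamma)$ for $k_j\ge K_0+1$, continuity of the proximal map gives $\overline{\h z}=\prox_{g^*,1/\gamma}(A\overline{\h x}/\gamma)$, and the envelope identity $g^*(\h z^{k_j})=e(A\h x^{k_j}/\gamma)-\frac{\gamma}{2}\|\h z^{k_j}-A\h x^{k_j}/\gamma\|^2$, with $e(\h w):=\min_{\h z}\{g^*(\h z)+\frac{\gamma}{2}\|\h z-\h w\|^2\}$ continuous, upgrades lower semicontinuity to genuine convergence $g^*(\h z^{k_j})\to g^*(\overline{\h z})$. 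Hence $\Psi(\h x^{k_j},\h z^{k_j},\h u^{k_j},\delta,\gamma)\to\Psi(\overline{\h x},\overline{\h z},\overline{\h u},\delta,\gamma)$, and dividing by $f(K\h x^{k_j})\to f(K\overline{\h x})>0$ yields $\Pi(\overline{\h x},\overline{\h z},\overline{\h u})=\overline\theta$.

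For (iva)/(ivb), I would start from the optimality condition of the projection step, $\theta_k K^*\h y^{k+1}\in\delta(\h x^{k+1}-\h u^k)+\nabla h(\h x^k)+A^*\h z^k+\partial\iota_{\cal S}(\h x^{k+1})$. Fixing an accumulation point $\overline{\h x}$ of $\{\h x^k\}$ and passing, after a further extraction, to a subsequence along which $\h x^{k_j},\h x^{k_j+1},\h u^{k_j},\h z^{k_j},\theta_{k_j}$ and $\h y^{k_j+1}$ all converge (to $\overline{\h x},\overline{\h x},\overline{\h u}=\overline{\h x},\overline{\h z},\overline\theta$ and some $\tilde{\h y}$), then using $\h x^{k+1}-\h u^k\to0$, continuity of $\nabla h$, and closedness of the normal cone and of the graph of $\partial f$, gives $0\in A^*\overline{\h z}+\nabla h(\overline{\h x})+\partial\iota_{\cal S}(\overline{\h x})-\overline\theta K^*\tilde{\h y}$ with $\tilde{\h y}\in\partial f(K\overline{\h x})$. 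To promote $\overline{\h z}$ to an approximate subgradient I would pass to the limit in $A\h x^{k+1}-\gamma\h z^{k+1}\in\partial g^*(\h z^{k+1})$ to get $A\overline{\h x}-\gamma\overline{\h z}\in\partial g^*(\overline{\h z})$, so ${\rm dist}(A\overline{\h x},\partial g^*(\overline{\h z}))\le\gamma\|\overline{\h z}\|\le\varepsilon$; Lemma~\ref{epsappsol} then gives $\overline{\h z}\in\partial_{\epsilon_1}g(A\overline{\h x})$ with $\epsilon_1=2\kappa\varepsilon$ in the Lipschitz case and $\epsilon_1=\frac{L_{\nabla g}}{2}\varepsilon^2$ in the smooth case. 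Multiplying the inclusion by $f(K\overline{\h x})>0$ (and using that $\partial\iota_{\cal S}(\overline{\h x})$ is a cone) matches Definition~\ref{appsol} with $\overline\Psi=\overline\theta f(K\overline{\h x})=\Psi(\overline{\h x},\overline{\h z},\overline{\h u},\delta,\gamma)$ by (iii); finally, using $\overline{\h x}=\overline{\h u}$, the bound $0\ge\langle\overline{\h z},A\overline{\h x}\rangle-g^*(\overline{\h z})-g(A\overline{\h x})\ge-\epsilon_1$, and $\frac{\gamma}{2}\|\overline{\h z}\|^2\le\varepsilon$, I would estimate $|\overline\Psi-(g(A\overline{\h x})+h(\overline{\h x}))|\le\epsilon_1+\varepsilon$, giving $\epsilon_2=(2\kappa+1)\varepsilon$ and $\epsilon_2=\frac{L_{\nabla g}}{2}\varepsilon^2+\varepsilon$, respectively.

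For (v), the Fenchel equality $f(K\h x^k)=\langle K\h x^k,\h y^{k+1}\rangle-f^*(\h y^{k+1})$ (valid since $\h y^{k+1}\in\partial f(K\h x^k)$) yields the exact identity $\langle K\h x^{k+1},\h y^{k+1}\rangle-f^*(\h y^{k+1})-f(K\h x^k)=\langle K(\h x^{k+1}-\h x^k),\h y^{k+1}\rangle$, whose right-hand side tends to $0$ because $\|\h x^{k+1}-\h x^k\|\to0$ and $\{\h y^{k+1}\}$ is bounded; dividing by $f(K\h x^k)\ge m$ gives \eqref{eqde}. The two-sided bound \eqref{K1} then follows from $\langle K\h x^k,\h y^k\rangle-f^*(\h y^k)\le f(K\h x^k)\le M$ (Fenchel--Young and Lemma~\ref{cofpositive}) together with the lower estimate obtained by fixing $m$ strictly below $\inf_k f(K\h x^k)$ so that, for $k$ large enough, the vanishing correction $\langle K(\h x^k-\h x^{k-1}),\h y^k\rangle$ is dominated by the margin. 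I expect the main obstacle to be the continuity step in (iii): because $g^*$ is merely lower semicontinuous and $\overline{\h z}$ need not lie in $\inte(\dom g^*)$, one cannot invoke continuity of $g^*$ directly, and the convergence of $g^*(\h z^{k_j})$ must be routed through the proximal/Moreau-envelope structure; the accompanying bookkeeping in (iv)---matching the limiting inclusion to Definition~\ref{appsol} and tracking both $\epsilon_1$ and $\epsilon_2$ through the positive scaling by $f(K\overline{\h x})$---is the other delicate point.
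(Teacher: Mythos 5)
Your proposal is correct and follows essentially the same route as the paper's proof: the descent estimate from Theorem \ref{PriTheo}(ii) combined with $\theta_k f(K{\h x}^k)=\Psi({\h x}^k,{\h z}^k,{\h u}^k,\delta,\gamma)$ for (i)--(ii), componentwise boundedness, passage to the limit in the optimality system \eqref{systemk} together with Lemma \ref{epsappsol} and the bound $\frac{\gamma}{2}\|\overline{\h z}\|^2\le\varepsilon$ for (iva)--(ivb), and the Fenchel equality $f(K{\h x}^k)=\langle K{\h x}^k,{\h y}^{k+1}\rangle-f^*({\h y}^{k+1})$ for (v). The only local difference is in (iii): the paper proves $\Psi(\overline{\h x},\overline{\h z},\overline{\h u},\delta,\gamma)=\overline{\Psi}$ by a two-sided inequality (lower semicontinuity of $g^*$ in one direction, and the prox-optimality of ${\h z}^{k_j}$ tested against the fixed point $\overline{\h z}$ in the other), whereas you obtain $g^*({\h z}^{k_j})\to g^*(\overline{\h z})$ directly from continuity of the Moreau envelope and of the proximal map --- an equivalent, if anything cleaner, use of the same prox structure; likewise, your explicit choice of $m$ strictly below $\inf_k f(K{\h x}^k)$ makes precise the margin argument that the paper leaves implicit when deducing \eqref{K1}.
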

\begin{proof} (i) It follows from Theorem \ref{PriTheo} (ii) that for all $k \geq K_0+1$
\begin{align}\label{keydestheta}
\theta_{k+1}\le & \ \theta_k - \frac{1}{f(K{\h x^{k+1}})}\left(c_1 \|{\h x}^k-{\h x}^{k+1}\|^2 + c_2
\|{\h u}^k-{\h u}^{k+1}\|^2  + c_3 \|{\h z}^{k}-{\h z}^{k+1}\|^2 \right)\nonumber\\
\le & \ \theta_k - \frac{1}{M} \left(c_1 \|{\h x}^k-{\h x}^{k+1}\|^2 + c_2
\|{\h u}^k-{\h u}^{k+1}\|^2  + c_3 \|{\h z}^{k}-{\h z}^{k+1}\|^2 \right),
\end{align}
where $M >0$ is the constant provided by Lemma \ref{cofpositive}. Thus,
\begin{eqnarray}\label{acc1}\;\; \|{\h x}^k-{\h x}^{k+1}\|\rightarrow 0,\;\|{\h u}^k-{\h u}^{k+1}\|\rightarrow 0,\;\|{\h z}^{k+1}-{\h z}^k\|\rightarrow 0,\; \|{\h x}^{k+1}-{\h u}^k\|\rightarrow 0,\end{eqnarray}
as $k\to +\infty$ and the sequence $\{\theta_k\}$ is nonincreasing. Thus, ${\overline{\theta}} := \lim_{k\to\infty}\theta_k \geq 0$ exists.

(ii) Since $\cal S$ is a compact set, the sequence $\{{\h x}^k\}$ is bounded  by construction, which, according to (\ref{acc1}), guarantees that $\{{\h u}^k\}$ is bounded. The sequence
 $\{{\h y}^k\}$ is bounded due to Assumption \ref{ass1}(d), and the sequence $\{{\h z}^k\}$ is  bounded due to Assumption \ref{ass2}.

(iii) Let  $\overline{\bm W}=(\overline{\h x},{\overline{\h y}},{\overline{\h z}},{\overline{\h u}})$ be an accumulation point of the sequence and $\{{\bm W}^{k}\}$ and $\{{\bm W}^{k_j}\}$ be a subsequence such that
$\lim_{j\to+\infty} {\bm W}^{k_j}= {\overline {\bm W}}.$

From
$
\lim_{j\to+\infty} \frac{\Psi({\h x}^{k_j},{\h z}^{k_j},{\h u}^{k_j}, \delta, \gamma)}{f(K{\h x}^{k_j})}=\lim_{j\to+\infty}\theta_{k_j} = {\overline{\theta}}
$
and
$\lim_{j\to+\infty}f(K{\h x}^{k_j})=f(K{\overline{\h x}}) >0$, which holds due to
Assumption \ref{ass1}(d), by noting that $\{K{\h x}^{k_j}\}\subseteq K(\mathcal{S}) \subseteq {\rm int} ({\rm dom} f$) and $K(\mathcal{S})$ is closed, we have that the following limit exists:
\begin{eqnarray}\label{Psilim} \overline{\Psi}:=\lim_{j\to\infty}\Psi({\h x}^{k_j},{\h z}^{k_j},{\h u}^{k_j}, \delta, \gamma) \in \mathbb{R}.\end{eqnarray}
\noindent Next, we show that $\overline{\Psi}=\Psi({\overline{\h x}},{\overline{\h z}},{\overline{\h u}}, \delta, \gamma)$. From (\ref{Psilim}), ${\h x}^{k_j} \in \mathcal{S}$, $g^*$ is lower semicontinuous and  the definition of ${\Psi}(\cdot, \cdot, \cdot, \delta, \gamma)$, we have that $\Psi({\overline{\h x}},{\overline{\h z}},{\overline{\h u}}, \delta, \gamma) \ge \overline{\Psi}$.
Invoking the update scheme, for every $j \geq 0$ such that $k_j \geq K_0+1$ it holds
$$g^*({\overline{\h z}})-\langle \overline{\h z},A{\h x}^{k_j} \rangle+\frac{\gamma}{2}\|{\overline{\h z}}\|^2 \ge g^*({\h z}^{k_j})-\langle{\h z}^{k_j},A{\h x}^{k_j} \rangle+\frac{\gamma}{2}\|{\h z}^{k_j}\|^2$$
and, further,
\begin{eqnarray*}
-g^*({\overline{\h z}})+\langle \overline{\h z},A{\h x}^{k_j} \rangle-\frac{\gamma}{2}\|{\overline{\h z}}\|^2 +h({\h x}^{k_j})\le -g^*({\h z}^{k_j})+\langle{\h z}^{k_j},A{\h x}^{k_j} \rangle-\frac{\gamma}{2}\|{\h z}^{k_j}\|^2+h({\h x}^{k_j}).\end{eqnarray*}
We let $j\to+\infty$ and get
\begin{eqnarray*}
-g^*({\overline{\h z}})+\langle \overline{\h z},A{\overline{\h x}} \rangle-\frac{\gamma}{2}\|{\overline{\h z}}\|^2 +h({\overline{\h x}})\le \lim_{j\to+\infty}( -g^*({\h z}^{k_j})+\langle{\h z}^{k_j},A{\h x}^{k_j} \rangle-\frac{\gamma}{2}\|{\h z}^{k_j}\|^2+h({\h x}^{k_j})),
\end{eqnarray*}
so, $\Psi({\overline{\h x}},{\overline{\h z}},{\overline{\h u}}, \delta, \gamma)\le \overline{\Psi}$. In conclusion,  $\Psi({\overline{\h x}},{\overline{\h z}},{\overline{\h u}}, \delta, \gamma) = \overline{\Psi}$ and $\Pi({\overline{\h x}},{\overline{\h z}},{\overline{\h u}})={\overline{\theta}}$.

(iv) Invoking the update rules for ${\h x}^{k+1}$, ${\h y}^{k+1}$, ${\h z}^{k+1}$ and
${\h u}^{k+1}$, for all $k\ge K_0+1$ it yields
\begin{eqnarray}\label{systemk}\left\{\begin{array}{l}{\h y}^{k+1}\in \partial f(K{\h x}^k),\\[0.1cm]
0\in \partial \iota _{\cal S}({\h x}^{k+1})+ A^* {\h z}^k+\nabla h({\h x}^k)-\theta_k K^* {\h y}^{k+1}  +\delta ({\h x}^{k+1}-{\h u}^k),\\[0.1cm]
A{\h x}^{k+1} -\gamma {\h z}^{k+1}\in\partial g^*({\h z}^{k+1}),\\[0.1cm]
{\h u}^{k+1}=(1-\beta) {\h u}^k + \beta {\h x}^{k+1}.\end{array}\right.
\end{eqnarray}
Let $\overline{\bm W}=(\overline{\h x},{\overline{\h y}},{\overline{\h z}},{\overline{\h u}})$ be an accumulation point of the sequence of $\{{\bm W}^{k}\}$,
and let $\{{\bm W}^{k_j}=({\h x}^{k_j},{{\h y}}^{k_j},{{\h z}}^{k_j},{{\h u}}^{k_j})\}$ be a subsequence converging to $\overline{\bm W}$ as $j \rightarrow +\infty$. From (\ref{acc1}), we see that ${\h x}^{k_j-1} \rightarrow \overline{\h x}$ and ${\h u}^{k_j-1} \rightarrow \overline{\h u}$ as $j \rightarrow +\infty$.
Then, letting $k=k_j-1$ and $j \rightarrow +\infty$ in the above system and taking into account the fact that the graph of the convex subdifferential is closed, we obtain
\begin{eqnarray}\label{KKT}\left\{\begin{array}{l}{\overline{\h y}}\in \partial f(K{\overline{\h x}}),\\[0.1cm]
{\bf 0}\in \partial \iota _{\cal S}({\overline{\h x}})+ A^* {\overline{\h z}}+\nabla h({\overline{\h x}})-{\overline\theta} K^* {\overline{\h y}},\\[0.1cm]
A{\overline{\h x}}-\gamma{\overline{\h z}}\in\partial g^*(\overline{\h z}) ,\\[0.1cm]
{\overline{\h u}}={\overline{\h x}}.\end{array}\right.
\end{eqnarray}
Since $\|{\h z}^{k+1}\|\le\min \left (\frac{\varepsilon}{\gamma},\sqrt{\frac{2\varepsilon}{\gamma}}\right)$  for all $k\ge K_0$, it yields
$\|{\overline{\h z}}\| \le \min \left (\frac{\varepsilon}{\gamma},\sqrt{\frac{2\varepsilon}{\gamma}}\right)$ .
The third inclusion relation in (\ref{KKT}) guarantees that
$ {\text{dist}}(\partial g^*({\overline{\h z}}), A{\overline{\h x}})\le \varepsilon$. Therefore, according to Lemma \ref{epsappsol}, $\overline{\h z} \in \partial_{2 \kappa \varepsilon} g(A \overline{\h x})$, which, combined with the first two inclusion relations in (\ref{KKT}), leads to
$0 \in \left(A^* \partial_{2 \kappa \varepsilon} g(A \overline{\h x}) + \nabla h({\overline{\h x}}) + \partial \iota _{\cal S}({\overline{\h x}}) \right) f(K{\overline{\h x}}) -{\overline\Psi} K^* \partial f(K{\overline{\h x}}).$

 As seen in the proof of statement (iii), we have
$\overline{\Psi}= \langle A{\overline{\h x}}, {\overline{\h z}} \rangle - g^*({\overline{\h z}})+h({\overline{\h x}}) -\frac{\gamma}{2}\|{\overline{\h z}}\|^2,$ therefore
\begin{align*}
|\overline{\Psi}-(g(A{\overline{\h x}})+h({\overline{\h x}}))|
= & \ (g(A{\overline{\h x}})+h({\overline{\h x}}))-\left (\langle A{\overline{\h x}}, {\overline{\h z}} \rangle - g^*({\overline{\h z}})+h({\overline{\h x}})-\frac{\gamma}{2}\|{\overline{\h z}}\|^2 \right)\\
= & \ g(A{\overline{\h x}}) +  g^*({\overline{\h z}})-\langle A{\overline{\h x}}, {\overline{\h z}} \rangle +\frac{\gamma}{2}\|{\overline{\h z}}\|^2 \leq 2 \kappa \varepsilon + \varepsilon = (2\kappa+1)\varepsilon.\end{align*}
Thus, ${\overline{\h x}}$
 is a limiting $(2 \kappa \varepsilon,(2\kappa+1)\varepsilon)$-lifted approximate stationary point of (\ref{PForm}).

(v) Invoking the first inclusion relation in (\ref{KKT}) and Lemma \ref{cofpositive}, we obtain for all $k \geq K_0+1$
\begin{align*} \frac{|f(K{\h x}^k)-\left(\langle K{\h x}^{k+1},{\h y}^{k+1} \rangle-f^*({\h y}^{k+1})\right)|}{f(K{\h x}^k)} = & \ \frac{|\langle {\h y}^{k+1},K({\h x}^k-{\h x}^{k+1})\rangle |}{f(K{\h x}^k)}\\
\le & \ \frac{|\langle {\h y}^{k+1},K({\h x}^k-{\h x}^{k+1})\rangle |}{m}.\end{align*}
Using that ${\h x}^k-{\h x}^{k+1}\rightarrow 0$ as $k \rightarrow +\infty$ and the boundedness of $\{\h y^k\}$, we obtain
\begin{eqnarray}\label{fraclim}\lim_{k\to+\infty}\frac{|f(K{\h x}^k)-\left(\langle K{\h x}^{k+1},{\h y}^{k+1}\rangle -f^*({\h y}^{k+1})\right)|}{f(K{\h x}^k)}=0.\end{eqnarray}
The second statement is a direct consequence of \eqref{fraclim}.
\end{proof}
Let $m >0$ be the scalar introduced in Lemma \ref{cofpositive}, $\gamma$ and $\delta$ the constants indicated in  Lemma \ref{lem6} (iii), and the following modified merit function $\Gamma:\{({\h x},{\h y}) \in {\mathbb R}^n \times \text{\rm{dom}} f^* : \langle  K {\h x}, {\h y}\rangle -f^*({\h y})> m/2 \} \times {\text {\rm dom}}g^* \times {\mathbb R^n}\rightarrow \overline{{\mathbb \R}}$ defined by
$$ \Gamma ({\h x},{\h y},{\h z},{\h u})
\!\!=  \!\!\frac{{\Psi}({\h x},{\h z},{\h u}, \delta,\gamma)}{\langle K{\h x},{\h y}\rangle-f^*({\h y})}
\!\!=\!\!   \frac{\langle{\h z},A{\h x} \rangle-g^*({\h z})+h({\h x})+\iota_{\cal S}({\h x}) + \frac{\delta}{2}\|{\h x}-{\h u}\|^2 \!-\! \frac{\gamma}{2}\|{\h z}\|^2}{\langle K{\h x},{\h y}\rangle-f^*({\h y})}.$$
In the following we show that values of $\Gamma$ along the sequence $({\h x}^{k},{\h y}^{k},{\h z}^{k},{\h u}^{k})$ converge to $\overline \theta$ as $k \rightarrow +\infty$ and that it takes this value at every point of $\Omega$.

\begin{theorem}\label{subsequential}
Suppose Assumption \ref{ass1} and Assumption \ref{ass2}  hold. Let $\gamma >0$,  $0<\beta<2$ and $K_0 \geq 0$ satisfy $\gamma_k = \gamma>0$,
$\delta_k = \delta:=2\nu +L_{\nabla h}+\frac{2\|A\|^2}{\gamma}$,  and
$\|{\h z}^{k+1}\|\le\min \left (\frac{\varepsilon}{\gamma},\sqrt{\frac{2\varepsilon}{\gamma}}\right)$  for $k\ge K_0$, as indicated by Lemma  \ref{lem6} (iii), and $K_1 \geq K_0+1$ such that \eqref{K1} holds, as indicated by Theorem \ref{subsequentialtoStat} (v). Then, the following statements are true:
\begin{itemize}
\item[(i)] There exists $c>0$  such that  for all $k\ge K_1$
{\small \begin{align}\label{gammaDesc}
& \ \Gamma ({\h x}^{k+1},{\h y}^{k+1},{\h z}^{k+1},{\h u}^{k+1}) \nn \\
\le &  \ \Gamma ({\h x}^k,{\h y}^k,{\h z}^k,{\h u}^k)
-c\|{\h x}^k-{\h x}^{k+1}\|^2-c\|{\h u}^k-{\h u}^{k+1}\|^2-c\|{\h z}^k -{\h z}^{k+1}\|^2;
\end{align}}
\item[(ii)]
$\lim_{k\to+\infty} \Gamma ({\h x}^{k},{\h y}^{k},{\h z}^{k},{\h u}^{k})$ exists
and it is equal to ${\overline{\theta}} =\lim_{k \rightarrow +\infty} \theta_k$;
\item[(iii)] For every $(\overline{\h x},{\overline{\h y}},{\overline{\h z}},{\overline{\h u}}) \in \Omega$ it holds ${\rm\Gamma}(\overline{\h x}, {\overline{\h y}}, {\overline{\h z}},{\overline{\h u}})={\overline{ \theta}}$.
\end{itemize}
\end{theorem}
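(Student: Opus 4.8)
The plan is to derive all three statements from the single descent estimate in Theorem~\ref{PriTheo}(i), using the defining identity $\theta_k f(K\h x^k) = \Psi(\h x^k,\h z^k,\h u^k,\delta,\gamma)$ together with the comparison $m \le \langle K\h x^k,\h y^k\rangle - f^*(\h y^k) \le f(K\h x^k) \le M$ from \eqref{K1}. Throughout I abbreviate $\Psi^k := \Psi(\h x^k,\h z^k,\h u^k,\delta,\gamma)$ and $d_k := \langle K\h x^k,\h y^k\rangle - f^*(\h y^k)$, so that $\theta_k = \Psi^k/f(K\h x^k)$ and $\Gamma(\h x^k,\h y^k,\h z^k,\h u^k) = \Psi^k/d_k$.

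For part (i), I would first substitute $\theta_k f(K\h x^k) = \Psi^k$ into Theorem~\ref{PriTheo}(i); the terms $\Psi^k$ then cancel on both sides, leaving
\begin{equation*}
\Psi^{k+1} \le \theta_k\, d_{k+1} - c_1\|\h x^k-\h x^{k+1}\|^2 - c_2\|\h u^k-\h u^{k+1}\|^2 - c_3\|\h z^k-\h z^{k+1}\|^2.
\end{equation*}
Dividing by $d_{k+1} > 0$ (valid since $k+1 \ge K_1$) turns the left side into $\Gamma(\h x^{k+1},\h y^{k+1},\h z^{k+1},\h u^{k+1})$ and the leading right-hand term into exactly $\theta_k$. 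The key observation is then $\theta_k \le \Gamma(\h x^k,\h y^k,\h z^k,\h u^k)$: because $\Psi^k = \theta_k f(K\h x^k) > 0$ and $d_k \le f(K\h x^k)$ by \eqref{K1}, dividing the positive numerator $\Psi^k$ by the smaller denominator $d_k$ can only enlarge the quotient. Finally I would bound $1/d_{k+1} \ge 1/M$ and take $c := \frac{1}{M}\min\{c_1,c_2,c_3\}$ to obtain (i); in particular this shows $\{\Gamma(\h x^k,\h y^k,\h z^k,\h u^k)\}$ is nonincreasing.

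For part (ii), rather than re-running a monotone-convergence argument I would write $\Gamma(\h x^k,\h y^k,\h z^k,\h u^k) = \theta_k \cdot \bigl(f(K\h x^k)/d_k\bigr)$ and show the ratio $f(K\h x^k)/d_k \to 1$. Equation~\eqref{eqde} supplies $d_{k+1}/f(K\h x^k) \to 1$; to remove the index shift I would invoke $\|\h x^k-\h x^{k+1}\| \to 0$ from \eqref{acc1} together with the Lipschitz continuity of $f$ on the compact set $K(\mathcal{S}) \subseteq {\rm int}(\dom f)$, which gives $f(K\h x^{k+1}) - f(K\h x^k) \to 0$ and hence, using $f(K\h x^k) \ge m > 0$, the ratio $f(K\h x^{k+1})/f(K\h x^k) \to 1$. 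Combining the two limits yields $d_{k+1}/f(K\h x^{k+1}) \to 1$, i.e. $d_k/f(K\h x^k) \to 1$ after reindexing. Since $\theta_k \to \overline{\theta}$, it follows that $\Gamma(\h x^k,\h y^k,\h z^k,\h u^k) \to \overline{\theta}$, so the limit exists and equals $\overline{\theta}$.

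For part (iii), let $\overline{\bm W} = (\overline{\h x},\overline{\h y},\overline{\h z},\overline{\h u}) \in \Omega$. The decisive ingredient is the first relation of the limiting system \eqref{KKT}, namely $\overline{\h y} \in \partial f(K\overline{\h x})$, which by the Fenchel equality gives $\langle K\overline{\h x},\overline{\h y}\rangle - f^*(\overline{\h y}) = f(K\overline{\h x})$. Thus the denominator of $\Gamma$ at $\overline{\bm W}$ collapses to $f(K\overline{\h x})$ (which is $\ge m > m/2$ by the lower semicontinuity of $f$ and Lemma~\ref{cofpositive}, so $\Gamma$ is well-defined there), whence $\Gamma(\overline{\h x},\overline{\h y},\overline{\h z},\overline{\h u}) = \Pi(\overline{\h x},\overline{\h z},\overline{\h u}) = \overline{\theta}$ by Theorem~\ref{subsequentialtoStat}(iii). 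The main obstacle is the index shift in part (ii): \eqref{eqde} only controls the mixed ratio $d_{k+1}/f(K\h x^k)$, and upgrading it to the genuine ratio $d_k/f(K\h x^k)$ is exactly where the continuity of $f$ and the asymptotic-regularity estimate $\|\h x^k-\h x^{k+1}\| \to 0$ enter; everything else is bookkeeping built on the comparison $d_k \le f(K\h x^k) \le M$.
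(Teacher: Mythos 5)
Your proposal is correct, and parts (i) and (iii) coincide with the paper's own proof: the same cancellation of $\Psi({\h x}^k,{\h z}^k,{\h u}^k,\delta,\gamma)$ in Theorem \ref{PriTheo}(i), division by the denominator lying in $[m,M]$, the key comparison $\theta_k \le \Gamma({\h x}^k,{\h y}^k,{\h z}^k,{\h u}^k)$ (positive numerator over the smaller denominator), the constant $c=\frac{1}{M}\min\{c_1,c_2,c_3\}$, and, for (iii), the Fenchel equality $\langle K\overline{\h x},\overline{\h y}\rangle - f^*(\overline{\h y}) = f(K\overline{\h x})$ coming from $\overline{\h y}\in\partial f(K\overline{\h x})$, which collapses $\Gamma$ to $\Pi$ at accumulation points so that Theorem \ref{subsequentialtoStat}(iii) finishes. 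Part (ii) is where you genuinely depart from the paper. The paper argues by monotonicity: by (i), $\eta_k:=\Gamma({\h x}^k,{\h y}^k,{\h z}^k,{\h u}^k)$ is nonincreasing and bounded below by $\theta_k$, so $\lim_{k}\eta_k$ exists and is $\ge\overline{\theta}$, while $\eta_{k+1}\le\theta_k-(\text{nonnegative terms})$ squeezes the limit down to $\overline{\theta}$. You instead compute the limit directly, writing $\Gamma({\h x}^k,{\h y}^k,{\h z}^k,{\h u}^k)=\theta_k\cdot f(K{\h x}^k)/d_k$ with your $d_k:=\langle K{\h x}^k,{\h y}^k\rangle-f^*({\h y}^k)$, and proving $d_k/f(K{\h x}^k)\to 1$ by upgrading \eqref{eqde} via the Lipschitz continuity of $f$ on the compact set $K({\cal S})\subseteq\inte(\dom f)$ together with $\|{\h x}^k-{\h x}^{k+1}\|\to 0$ from \eqref{acc1}. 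Both arguments are valid. The paper's route is more economical, needing nothing beyond (i) and $\theta_k\to\overline{\theta}$; your route costs the extra (standard, and used elsewhere in the paper, e.g.\ in Theorem \ref{residualDiffF}) fact that a convex function is Lipschitz on compact subsets of the interior of its domain, but it buys independence from the descent inequality: it identifies the limit of $\Gamma$ along the iterates without using monotonicity at all, and it makes transparent why that limit must equal $\overline{\theta}$ rather than merely dominate it. One micro-correction to the parenthetical in your part (iii): lower semicontinuity of $f$ gives $f(K\overline{\h x})\le\liminf_j f(K{\h x}^{k_j})$, which is the wrong direction for the lower bound $f(K\overline{\h x})>m/2$; the bound instead follows from the continuity of $f$ relative to $\inte(\dom f)$ (so that $f(K{\h x}^{k_j})\to f(K\overline{\h x})\ge m$), or by passing to the limit in $d_{k_j}\ge m$ using the lower semicontinuity of $f^*$. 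This does not affect the rest of the argument.
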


\begin{proof}
(i) By using the fact of $\theta_k f(K{\h x}^k)=\Psi({\h x}^k,{\h z}^k,{\h u}^k, \delta, \gamma)$, from Theorem \ref{PriTheo} (i)  we obtain for all $k \geq K_1$
\begin{align*}
 {\Psi}({\h x}^{k+1},{\h z}^{k+1},{\h u}^{k+1}, \delta, \gamma)\leq &
\ \theta_k\left( \langle K{\h x}^{k+1},{\h y}^{k+1} \rangle-f^*({\h y}^{k+1})\right) \nonumber \\
& \ -c_1\|{\h x}^k-{\h x}^{k+1}\|^2-c_2\|{\h u}^k-{\h u}^{k+1}\|^2-c_3\|{\h z}^{k}-{\h z}^{k+1}\|^2.
 \end{align*}
Since $0 < m \leq \langle K{\h x}^{k+1}, {\h y}^{k+1} \rangle -f^*({\h y}^{k+1})\leq M$ for all $k \geq K_1$, it yields
\begin{eqnarray}\label{Theo4:thetak}&&\eta_{k+1}\le \theta_k
-\frac{c_1}{M}\|{\h x}^k-{\h x}^{k+1}\|^2-\frac{c_2}{M}\|{\h u}^k-{\h u}^{k+1}\|^2-\frac{c_3}{M} \|{\h z}^k-{\h z}^{k+1}\|^2,
\end{eqnarray}
where $\eta_{k+1}:= \Gamma ({\h x}^{k+1},{\h y}^{k+1},{\h z}^{k+1},{\h u}^{k+1}) = \frac{\Psi({\h x}^{k+1},{\h z}^{k+1},{\h u}^{k+1}, \delta, \gamma)}{\langle K{\h x}^{k+1}, {\h y}^{k+1} \rangle -f^*({\h y}^{k+1})}$. Then one can choose $c:=\frac{1}{M}\min({c_1}, {c_2}, {c_3})$ and the conclusion follow as
$\theta_k\le \eta_k$ for all $k \geq K_1$.

(ii) Since $\eta_k\ge\theta_k$ for all $k \geq K_1$, the sequence $\{\eta_k\}_{k \geq K_1}$ is bounded from below. In addition, it is non-increasing, which means that $\lim_{k\to+\infty} \eta_k$ exists and $\lim_{k\to+\infty} \eta_k \geq \lim_{k\to+\infty} \theta_k = \overline \theta$.
 On the other hand, invoking (\ref{Theo4:thetak}),
 it yields
 \begin{align*}
 \lim \limits_{k\to+\infty}\eta_{k+1}\le & \ \lim \limits_{k\to+\infty}(\theta_k-c\|{\h x}^k-{\h x}^{k+1}\|^2-c\|{\h u}^k-{\h u}^{k+1}\|^2-c \|{\h z}^k-{\h z}^{k+1}\|^2)\\
 = & \ \lim\limits_{k\to\infty} \theta_k = \overline \theta,
 \end{align*}
which proves the statement.

(iii) For every $(\overline{\h x},{\overline{\h y}},{\overline{\h z}},{\overline{\h u}}) \in \Omega$ it holds ${\overline{\h y}} \in \partial f(K \overline{\h x})$, therefore
$${\rm\Gamma}(\overline{\h x}, {\overline{\h y}}, {\overline{\h z}},{\overline{\h u}}) = \frac{{\Psi}(\overline{\h x},\overline{\h z},\overline {\h u}, \delta,\gamma)}{{\langle K \overline{\h x},\overline{\h y} \rangle -f^*(\overline{\h y})}} = \frac{{\Psi}(\overline{\h x},\overline{\h z},\overline {\h u}, \delta,\gamma)}{f(K \overline {\h x})} = \overline \theta.$$
The conclusion follows.
\end{proof}

\section{Convergence of the full sequence}\label{sec6}
To this end, we will provide two different settings in which we can bound the distance between the origin and the limiting subdifferential of ${\rm\Gamma}$ and ${\rm\Pi}$, respectively, in terms of the residual between two consecutive iterates.

The two settings are considered below under the assumptions of Theorem \ref{subsequential}, namely, by supposing that Assumption \ref{ass1} and Assumption \ref{ass2} hold,  $0<\beta<2$, $\gamma >0$ and $K_0 \geq 0$ satisfy $\gamma_k = \gamma>0$,
$\delta_k = \delta:=2\nu +L_{\nabla h}+\frac{2\|A\|^2}{\gamma}$,  and
$\|{\h z}^{k+1}\|\le\min \left (\frac{\varepsilon}{\gamma},\sqrt{\frac{2\varepsilon}{\gamma}}\right)$  for all $k\ge K_0$, as indicated by Lemma  \ref{lem6} (iii), and $K_1 \geq K_0+1$ is such that \eqref{K1} holds, as indicated by Theorem \ref{subsequentialtoStat} (v).
\vspace{-0.1cm}
\subsection{\texorpdfstring{$f^*$}{fstar} satisfies the calm condition over its effective domain and \texorpdfstring{$g$}{g} is essentially strictly convex}\label{subsec61}

The following characterization of the Fr\'echet subdifferential of the merit function $\Gamma$ follows from Lemma \ref{ratioC2} and the exact Fr\'echet subdifferential formula for the sum of a function and a differentiable function.
\begin{lemma} \label{GamFunCal}
Suppose Assumption \ref{ass1} holds. Let $f^*$ satisfy
the calm condition at $\widehat{\h y}\in{\text{\rm{dom}}} f^*$,
$\widehat{\h x}\in{\cal S}$ be such that
   $\langle  K \widehat{\h x}, \widehat{\h y}\rangle -f^*(\widehat{\h y})> m/2$, and
 $g^*$ be differentiable at ${\widehat{\h z}}\in\text{\rm int}({\text {\rm dom}}g^*)$.
 Denote $\alpha_1:=\Psi(\widehat{\h x},\widehat{\h z},\widehat{\h u}, \delta, \gamma)$ and $\alpha_2:=\langle K{\widehat{\h x}},\widehat{\h y}\rangle-f^*(\widehat{\h y})$, and suppose that $\alpha_1>0$. Then, there exist open sets $\mathcal{O}_i$, $i=1,2$, such that  $\langle K{\widehat{\h x}},\widehat{\h y}\rangle-f^*(\widehat{\h y})> m/2$ for all $(\widehat{\h x},\widehat{\h y})\in{\cal O}_1\times {\cal O}_2$, and
 {\small \begin{align*}
  {\hat\partial} \Gamma(\widehat{\h x},\widehat{\h y},\widehat{\h z},\widehat{\h u})
 \!\!= \!\!\left\{ (\pmb{\xi}_{\h x},\pmb{\xi}_{\h y},\pmb{\xi}_{\h z},\pmb{\xi}_{\h u})\left|
 \begin{array}{l}
 {\pmb{\xi}_{\h x}}\in \displaystyle{\frac{\alpha_2(A^*{\widehat{\h z}}+\nabla h({\widehat{\h x}})+\partial \iota_{\cal S}(\widehat{\h x}) + \delta({\widehat{\h x}}-\widehat{\h u}))-\alpha_1 K^*\widehat{\h y}}{(\langle K \widehat{\h x},\widehat{\h y} \rangle -f^*(\widehat{\h y}))^2}}\\[0.cm]
 {\pmb{\xi}_{\h y}}\in \displaystyle{\frac{\alpha_1({{\partial}f^*}(\widehat{\h y})-K\widehat{\h x})}{(\langle K\widehat{\h x},\widehat{\h y} \rangle -f^*(\widehat{\h y}))^2}}\\[0.cm]
 \pmb{\xi}_{\h z} =\displaystyle{\frac{A\widehat{\h x}-\nabla g^*(\widehat{\h z})-\gamma{\widehat{\h z}}}{\langle K \widehat{\h x},\widehat{\h y} \rangle -f^*(\widehat{\h y})}}\\[0.cm]
 {\pmb{\xi}_{\h u}}=  \displaystyle{\frac{\delta(\widehat{\h u}-{\widehat{\h x}})}{\langle K \widehat{\h x},\widehat{\h y} \rangle -f^*(\widehat{\h y})}}
 \end{array}\right.
 \right\}.
 \end{align*}}
 \end{lemma}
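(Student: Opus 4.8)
The plan is to view $\Gamma$ as a genuine ratio on a product space and apply Lemma \ref{ratioC2}(i). Treating $(\h x,\h y,\h z,\h u)$ as a single variable in the Euclidean product space, I would set $f_1:=\Psi(\cdot,\cdot,\cdot,\delta,\gamma)$ (regarded as a function of $(\h x,\h y,\h z,\h u)$ that does not depend on $\h y$) and $f_2(\h x,\h y,\h z,\h u):=\langle K\h x,\h y\rangle-f^*(\h y)$, so that $\Gamma=f_1/f_2$ with $\alpha_1=f_1(\widehat{\h w})$ and $\alpha_2=f_2(\widehat{\h w})$, where $\widehat{\h w}:=(\widehat{\h x},\widehat{\h y},\widehat{\h z},\widehat{\h u})$. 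First I would construct the neighborhoods: since $f^*$ is calm, hence continuous, at $\widehat{\h y}$ and the bilinear map $\langle K\cdot,\cdot\rangle$ is continuous, $f_2$ is continuous at $(\widehat{\h x},\widehat{\h y})$; as $f_2(\widehat{\h x},\widehat{\h y})=\alpha_2>m/2$, there exist open sets $\mathcal{O}_1\ni\widehat{\h x}$ and $\mathcal{O}_2\ni\widehat{\h y}$ with $\langle K\h x,\h y\rangle-f^*(\h y)>m/2$ for all $(\h x,\h y)\in\mathcal{O}_1\times\mathcal{O}_2$; and since $\widehat{\h z}\in\inte(\dom g^*)$ I would choose $\mathcal{O}_3\ni\widehat{\h z}$ with $\mathcal{O}_3\subseteq\inte(\dom g^*)$, on which $g^*$ is finite. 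On the open set $\mathcal{O}_1\times\mathcal{O}_2\times\mathcal{O}_3\times\mathbb{R}^n$ the denominator is positive, so $\Gamma$ is well-defined there.

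Next I would verify the three hypotheses of Lemma \ref{ratioC2}(i). The numerator $f_1=\Psi$ is continuous at $\widehat{\h w}$ relative to $\dom f_1$, because $\widehat{\h x}\in{\cal S}$ (so $\iota_{\cal S}$ is constant near $\widehat{\h x}$ along $\dom f_1$), $g^*$ is continuous near $\widehat{\h z}\in\inte(\dom g^*)$, and the remaining terms are continuous. The denominator $f_2$ is calm at $\widehat{\h w}$, being the sum of the smooth bilinear map and $-f^*$, the latter calm at $\widehat{\h y}$ by hypothesis. Finally $f_2(\widehat{\h w})=\alpha_2>0$. Lemma \ref{ratioC2}(i) then yields
\[
\hat\partial\Gamma(\widehat{\h w})=\frac{\hat\partial(\alpha_2 f_1-\alpha_1 f_2)(\widehat{\h w})}{\alpha_2^2}=\frac{\hat\partial\big(\alpha_2\Psi-\alpha_1(\langle K\cdot,\cdot\rangle-f^*)\big)(\widehat{\h w})}{(\langle K\widehat{\h x},\widehat{\h y}\rangle-f^*(\widehat{\h y}))^2}.
\]

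The core of the argument is computing the Fréchet subdifferential in the numerator by splitting $\alpha_2 f_1-\alpha_1 f_2$ into a part differentiable at $\widehat{\h w}$ and a nonsmooth remainder, then invoking the exact sum rule $\hat\partial(D+N)(\widehat{\h w})=\nabla D(\widehat{\h w})+\hat\partial N(\widehat{\h w})$, valid since $D$ is differentiable at $\widehat{\h w}$. I would collect into $D:=\alpha_2\big(\langle\h z,A\h x\rangle-g^*(\h z)+h(\h x)+\tfrac{\delta}{2}\|\h x-\h u\|^2-\tfrac{\gamma}{2}\|\h z\|^2\big)-\alpha_1\langle K\h x,\h y\rangle$ every term except $N:=\alpha_2\iota_{\cal S}(\h x)+\alpha_1 f^*(\h y)$. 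A blockwise computation gives $\nabla D(\widehat{\h w})$ with entries $\alpha_2(A^*\widehat{\h z}+\nabla h(\widehat{\h x})+\delta(\widehat{\h x}-\widehat{\h u}))-\alpha_1 K^*\widehat{\h y}$ in $\h x$, $-\alpha_1 K\widehat{\h x}$ in $\h y$, $\alpha_2(A\widehat{\h x}-\nabla g^*(\widehat{\h z})-\gamma\widehat{\h z})$ in $\h z$, and $\alpha_2\delta(\widehat{\h u}-\widehat{\h x})$ in $\h u$; since $N$ is a separable sum in the disjoint blocks $\h x$ and $\h y$, the Fréchet subdifferential of a separable sum is the Cartesian product of the blockwise subdifferentials, so $\hat\partial N(\widehat{\h w})=\alpha_2\partial\iota_{\cal S}(\widehat{\h x})\times\alpha_1\partial f^*(\widehat{\h y})\times\{0\}\times\{0\}$ (using that $\iota_{\cal S}$ and $f^*$ are convex, so Fréchet and convex subdifferentials coincide). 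Dividing by $\alpha_2^2$ and distributing reproduces the four displayed inclusions.

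I expect the main obstacle to be justifying that $D$ is genuinely Fréchet differentiable at $\widehat{\h w}$, which is precisely where the hypothesis that $g^*$ is differentiable at $\widehat{\h z}$ enters: it places the term $-\alpha_2 g^*(\h z)$ inside the smooth block with gradient $-\alpha_2\nabla g^*(\widehat{\h z})$. By contrast, $f^*$ need not be differentiable, so $\alpha_1 f^*(\h y)$ must stay in $N$ and be treated through its convex subdifferential; it is exactly the calmness of $f^*$ at $\widehat{\h y}$ that legitimizes invoking the ratio rule (calmness of the denominator). A secondary point requiring care is ensuring the denominator remains above $m/2$ on the chosen neighborhood, which keeps the differentiation on the correct effective domain and guarantees that the product structure $\mathcal{O}_1\times\mathcal{O}_2\times\mathcal{O}_3\times\mathbb{R}^n$ is an admissible open set for Lemma \ref{ratioC2}(i).
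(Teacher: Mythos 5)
Your proposal is correct and follows essentially the same route the paper indicates: the paper derives Lemma \ref{GamFunCal} precisely by applying the ratio rule of Lemma \ref{ratioC2}(i) to $\Psi$ over $\langle K\cdot,\cdot\rangle-f^*$ and then using the exact Fr\'echet sum rule for a nonsmooth term plus a function differentiable at the point, which is exactly your decomposition into $D$ and $N=\alpha_2\iota_{\cal S}+\alpha_1 f^*$. Your verification of the ratio rule's hypotheses (continuity of $\Psi$ relative to its domain, calmness of the denominator via calmness of $f^*$, positivity of $\alpha_2$) and the blockwise gradient computation fill in the details the paper leaves implicit, and they are accurate.
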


\begin{theorem}\label{residual}
Suppose that $f^*$ satisfies the calm condition over its effective domain and $g$ is essentially strictly convex. Then there exists $\zeta>0$  such that for all $k\ge K_1$
\begin{eqnarray*}&&{\text{\rm dist}}(\h 0,\partial\Gamma ({\h x}^{k+1},{\h y}^{k+1},{\h z}^{k+1},{\h u}^{k+1}))
\le \zeta(\|{\h x}^k-{\h x}^{k+1}\|+\|{\h u}^k -{\h u}^{k+1}\|+\|{\h z}^k-{\h z}^{k+1}\|).\end{eqnarray*}
\end{theorem}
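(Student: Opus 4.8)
The plan is to bound the distance from the origin to $\partial \Gamma$ by constructing an explicit element of the limiting subdifferential and estimating its norm componentwise. Since $g$ is essentially strictly convex, its conjugate $g^*$ is essentially smooth, hence differentiable on $\inte(\dom g^*)$; combined with the calmness of $f^*$ over its domain and the fact that $\alpha_1 = \Psi({\h x}^{k+1},{\h z}^{k+1},{\h u}^{k+1}, \delta, \gamma) = \theta_{k+1} f(K{\h x}^{k+1}) > 0$ and $\alpha_2 = \langle K{\h x}^{k+1},{\h y}^{k+1}\rangle - f^*({\h y}^{k+1}) > m/2 > 0$ for all $k \geq K_1$ (guaranteed by \eqref{K1}), the hypotheses of Lemma \ref{GamFunCal} are met at the point $(\h x^{k+1},\h y^{k+1},\h z^{k+1},\h u^{k+1})$. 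Thus I would first invoke Lemma \ref{GamFunCal} to obtain the explicit description of $\hat\partial \Gamma$ (and hence a subset of $\partial \Gamma$) at each such iterate, expressed in terms of $A^*\h z^{k+1}$, $\nabla h$, $\partial\iota_{\cal S}$, $K^*\h y^{k+1}$, $\partial f^*(\h y^{k+1})$, $\nabla g^*(\h z^{k+1})$, and the residual terms $\delta(\h x^{k+1}-\h u^{k+1})$.

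Next I would select a specific subgradient in each of the four component sets and bound its norm using the optimality/update relations in the system \eqref{systemk}. The key observation is that each update rule already encodes a near-cancellation: from the second line of \eqref{systemk} the quantity $A^*\h z^k + \nabla h(\h x^k) - \theta_k K^*\h y^{k+1} + \delta(\h x^{k+1}-\h u^k)$ lies in $-\partial\iota_{\cal S}(\h x^{k+1})$, so choosing the matching element of $\partial\iota_{\cal S}(\h x^{k+1})$ in $\pmb\xi_{\h x}$ makes the numerator reduce (after multiplying through by $\alpha_2$ and subtracting $\alpha_1 K^*\h y^{k+1}$) to expressions involving only $A^*(\h z^{k+1}-\h z^k)$, $\nabla h(\h x^{k+1})-\nabla h(\h x^k)$, $\delta(\h u^k-\h u^{k+1})$, and the gap $(\theta_k - \theta_{k+1})K^*\h y^{k+1}$. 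For $\pmb\xi_{\h z}$, the third line of \eqref{systemk} gives $A\h x^{k+1}-\gamma\h z^{k+1} = \nabla g^*(\h z^{k+1})$ (using differentiability of $g^*$), so that numerator vanishes identically; for $\pmb\xi_{\h u}$ the numerator is exactly $\delta(\h u^{k+1}-\h x^{k+1}) = \delta(1-\beta)(\h u^k - \h x^{k+1})$, which via the extrapolation identity is controlled by $\|\h u^k - \h u^{k+1}\|$. For $\pmb\xi_{\h y}$ I would use calmness of $f^*$ at $\h y^{k+1}$ together with $\h y^{k+1}\in\partial f(K\h x^k)$, equivalently $K\h x^k \in \partial f^*(\h y^{k+1})$, to bound $\operatorname{dist}(K\h x^{k+1},\partial f^*(\h y^{k+1})) \le \|K(\h x^{k+1}-\h x^k)\|$.

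The denominators $(\langle K\h x^{k+1},\h y^{k+1}\rangle - f^*(\h y^{k+1}))^2$ and its first power are bounded below by $(m/2)^2$ and $m/2$ respectively for $k \ge K_1$, by \eqref{K1}, so they only contribute a uniform constant factor. The numerators $\alpha_1, \alpha_2$ are bounded above uniformly: $\alpha_2 \le M$, and $\alpha_1 = \theta_{k+1} f(K\h x^{k+1}) \le \theta_{K_1} M$ since $\{\theta_k\}$ is nonincreasing and $f(K\h x^{k+1})\le M$. Collecting the four bounds and using the triangle inequality, every surviving term is a uniform constant times one of $\|\h x^k - \h x^{k+1}\|$, $\|\h u^k - \h u^{k+1}\|$, or $\|\h z^k-\h z^{k+1}\|$; here $\nabla h$ contributes $L_{\nabla h}\|\h x^k-\h x^{k+1}\|$, and $A^*$ contributes $\|A\|\|\h z^k-\h z^{k+1}\|$. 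The main obstacle I anticipate is controlling the term $(\theta_k-\theta_{k+1})K^*\h y^{k+1}$ appearing in $\pmb\xi_{\h x}$: this is not manifestly a residual of the iterates. I would handle it by noting that Theorem \ref{PriTheo}(ii) makes $\theta_k-\theta_{k+1}$ a nonnegative quantity dominated by $\frac{1}{m}(c_1\|\h x^k-\h x^{k+1}\|^2 + c_2\|\h u^k-\h u^{k+1}\|^2 + c_3\|\h z^k-\h z^{k+1}\|^2)$, and since the iterate differences are bounded (the sequence is bounded by Theorem \ref{subsequentialtoStat}(ii)) each squared residual is dominated by the corresponding linear residual up to a constant, yielding the claimed linear estimate. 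Taking $\zeta$ to be the maximum of the finitely many constants assembled above completes the argument.
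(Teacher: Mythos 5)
Your overall strategy coincides with the paper's proof: invoke Lemma \ref{GamFunCal} at $({\h x}^{k+1},{\h y}^{k+1},{\h z}^{k+1},{\h u}^{k+1})$ (legitimate, since essential strict convexity of $g$ makes $g^*$ essentially smooth, the third relation of \eqref{systemk} puts ${\h z}^{k+1}$ in $\inte(\dom g^*)$, and \eqref{K1} gives the positivity conditions), then select exactly the subgradients encoded in \eqref{systemk} --- the matching element of $\partial\iota_{\cal S}({\h x}^{k+1})$, the point $K{\h x}^{k}\in\partial f^*({\h y}^{k+1})$, the vanishing ${\h z}$-component, and the extrapolation identity for the ${\h u}$-component --- and estimate componentwise with denominators bounded below via \eqref{K1}. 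All of that is sound and is what the paper does.

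The genuine gap sits precisely at the point you flagged as the main obstacle, and your fix does not work. Two problems. First, the algebra: with your choice of $\iota_{\cal S}$-subgradient, the leftover coupling term is $\frac{\theta_k\alpha_2-\alpha_1}{\alpha_2^2}K^*{\h y}^{k+1}$, where $\alpha_1=\theta_{k+1}f(K{\h x}^{k+1})$ and $\alpha_2=\langle K{\h x}^{k+1},{\h y}^{k+1}\rangle-f^*({\h y}^{k+1})$; this is \emph{not} a multiple of $(\theta_k-\theta_{k+1})K^*{\h y}^{k+1}$, because $f(K{\h x}^{k+1})\neq\alpha_2$ in general, so an extra piece $\theta_{k+1}\bigl(f(K{\h x}^{k+1})-\alpha_2\bigr)K^*{\h y}^{k+1}$ survives and needs its own bound (it admits one, via the Fenchel equality $f(K{\h x}^k)=\langle K{\h x}^k,{\h y}^{k+1}\rangle-f^*({\h y}^{k+1})$ and Lipschitz continuity of $f$ on $K({\cal S})$). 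Second, and fatal as written: Theorem \ref{PriTheo}(ii) gives the inequality in the \emph{wrong direction}. After division by $f(K{\h x}^{k+1})$ it reads $\theta_{k+1}\le\theta_k-\frac{1}{M}\bigl(c_1\|{\h x}^k-{\h x}^{k+1}\|^2+c_2\|{\h u}^k-{\h u}^{k+1}\|^2+c_3\|{\h z}^k-{\h z}^{k+1}\|^2\bigr)$, i.e.\ it is a \emph{lower} bound on $\theta_k-\theta_{k+1}$, whereas your estimate requires an \emph{upper} bound; no descent inequality can supply that. The correct route, which is the one the paper takes, is to bound the coupling term directly from the definition of $\theta_k$: writing $\Psi^k:=\Psi({\h x}^{k},{\h z}^{k},{\h u}^{k},\delta,\gamma)$, one has $\theta_k\alpha_2-\alpha_1=\Psi^k\left(\frac{\alpha_2}{f(K{\h x}^k)}-1\right)+\bigl(\Psi^k-\Psi^{k+1}\bigr)$, where the first bracket equals $\frac{\langle K({\h x}^{k+1}-{\h x}^k),{\h y}^{k+1}\rangle}{f(K{\h x}^k)}$ by the Fenchel equality, and $|\Psi^k-\Psi^{k+1}|$ is bounded \emph{linearly} in the three residuals using the boundedness of the iterates and the Lipschitz continuity of the ingredients of $\Psi$ (the paper's inequality \eqref{boundPsi}). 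Replacing your appeal to Theorem \ref{PriTheo}(ii) by this direct Lipschitz-type estimate repairs the argument; the rest of your proof then goes through.
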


\begin{proof} Let $k \geq K_1$ be fixed. It holds
\begin{eqnarray*}{\text{\rm dist}}(\h 0,\partial\Gamma ({\h x}^{k+1},{\h y}^{k+1},{\h z}^{k+1},{\h u}^{k+1}))
\le
{\text{\rm dist}}(\h 0,{\hat\partial}\Gamma ({\h x}^{k+1},{\h y}^{k+1},{\h z}^{k+1},{\h u}^{k+1})).\end{eqnarray*}
Since $g$ is essentially strictly convex, $g^*$ is
essentially smooth \cite[Theorem 26.3]{Rock70}. According to the third inclusion relation in \eqref{systemk}, we have
$ A{\h x}^{k+1} -\gamma {\h z}^{k+1} \in  \partial g^*({\h z}^{k+1})$, which means ${\h z}^{k+1} \in {\rm int} ({\rm dom}g^*)$. In addition,  $m \leq \langle K{\h x}^{k+1}, {\h y}^{k+1} \rangle -f^*({\h y}^{k+1}) \leq M$ and $\Psi({\h x}^{k+1},{\h z}^{k+1},{\h u}^{k+1}, \delta, \gamma)>0$.
Thus, one can make use of the formula provided in Lemma \ref{GamFunCal} to characterize the subdifferential of
 $\Gamma$ at $({\h x}^{k+1},{\h y}^{k+1},{\h z}^{k+1},{\h u}^{k+1})$.
Invoking again \eqref{systemk}, we have $-(A^*{\h z}^k+\nabla h({\h x}^k)-\theta_k K^* {\h y}^{k+1} +\delta({\h x}^{k+1}-{\h u}^k))\in {\hat\partial} \iota _{\cal S}({\h x}^{k+1})$ and ${\h y}^{k+1} \in \partial f(K{\h x}^{k})$ or, equivalently, $K{\h x}^{k} \in \partial f^*({\h y}^{k+1})$, and  $A{\h x}^{k+1} -(\nabla g^*({\h z}^{k+1})+\gamma {\h z}^{k+1})=0$.

Thus, for
 \begin{eqnarray*}
\pmb{\xi}_{\h x}^{k+1} &:= & \!\! \frac{A^*{\h z}^{k+1} +\nabla h({\h x}^{k+1})+\delta ({\h x}^{k+1}-{\h u}^{k+1})}{\langle K{\h x}^{k+1},{\h y}^{k+1}\rangle-f^*({\h y}^{k+1})}
 \!-\!\! \ \frac{A^*{\h z}^k+\nabla h({\h x}^k)-\theta_k K^* {\h y}^{k+1} +\delta({\h x}^{k+1}-{\h u}^k)}{\langle K{\h x}^{k+1},{\h y}^{k+1}\rangle-f^*({\h y}^{k+1})}\\
& -& \ \frac{\Psi({\h x}^{k+1},{\h z}^{k+1},{\h u}^{k+1}, \delta, \gamma) K^*{\h y}^{k+1}}{(\langle K{\h x}^{k+1},{\h y}^{k+1}\rangle-f^*({\h y}^{k+1}))^2}\\
 \pmb{\xi}_{\h y}^{k+1} &:= & \ \frac{\Psi({\h x}^{k+1},{\h z}^{k+1},{\h u}^{k+1}, \delta, \gamma)(-K{\h x}^{k+1}+K{\h x}^{k})}{(\langle K{\h x}^{k+1},{\h y}^{k+1}\rangle-f^*({\h y}^{k+1}))^2},\\
 {\pmb{\xi}_{\h z}^{k+1}}&:= & \ \frac{A{\h x}^{k+1} -(\nabla g^*({\h z}^{k+1})+\gamma {\h z}^{k+1})}{ \langle K{\h x}^{k+1}, {\h y}^{k+1} \rangle -f^*({\h y}^{k+1})} = 0, \; \;\pmb{\xi}_{\h u}^{k+1}:= \frac{\delta({\h u}^{k+1}-{\h x}^{k+1})}{\langle K{\h x}^{k+1},{\h y}^{k+1}\rangle-f^*({\h y}^{k+1})},
 \end{eqnarray*}
we have that $(\pmb{\xi}^{k+1}_{\h x},\pmb{\xi}^{k+1}_{\h y},\pmb{\xi}^{k+1}_{\h z},\pmb{\xi}^{k+1}_{\h u})\in{\hat\partial} \Gamma({\h x}^{k+1},{\h y}^{k+1},{\h z}^{k+1},{\h u}^{k+1}).$ Consequently,
\begin{equation}\label{firstbound}
{\text{\rm dist}}(\h 0,{\hat\partial}\Gamma ({\h x}^{k+1},{\h y}^{k+1},{\h z}^{k+1},{\h u}^{k+1})) \leq \|\pmb{\xi}_{\h x}^{k+1}\| + \|\pmb{\xi}_{\h y}^{k+1}\| + \|\pmb{\xi}_{\h u}^{k+1}\|.
\end{equation}

Due to the boundedness of the four sequences, the values
$$B_{\h x}:=\sup\limits_{k} \|{\h x}^k\|, \ B_{\h y}:=\sup\limits_{k}\|{\h y}^k\|, \ B_{\h z}:=\sup\limits_{k}\|{\h z}^k\|, \ B_{\h u}:=\sup\limits_{k}\|{\h u}^k\|$$
 are finite. Since $\{\theta_k\}$ and $\{f(K{\h x}^{k})\}$ are bounded, the sequence $\{\Psi({\h x}^{k},{\h z}^{k},{\h u}^{k}, \delta, \gamma)\}$ is also bounded. Let $B_{\Psi}:=\sup\limits_{k}|\Psi({\h x}^{k},{\h z}^{k},{\h u}^{k}, \delta, \gamma)|<+\infty$. Further, as  $\{{\h z}^k\} \subseteq \inte(\dom g^*)$, $g^*$ is Lipschitz continuous on the closure of $\{{\h z}^k\}$. We denote by $L_{g^*}$ the corresponding Lipschitz constant.
This being given, it is evident that
  \begin{align}\label{boundPsi} & \ |\Psi({\h x}^{k},{\h z}^{k},{\h u}^{k}, \delta, \gamma) -\Psi({\h x}^{k+1},{\h z}^{k+1},{\h u}^{k+1}, \delta, \gamma) | \nonumber \\
  \le & \ \varrho_1\|{\h x}^k-{\h x}^{k+1}\|
  +\varrho_2 \|{\h z}^k-{\h z}^{k+1}\|
  +\varrho_3\|{\h u}^{k+1}-{\h u}^k \|,\end{align}
  where $\varrho_1:=B_{\h z}\|A\|+\delta(B_{\h x}+B_{\h u})+ L_{h}$, $\varrho_2:=\|A\|B_{\h x}+L_{g^*}+\gamma B_{\h z}$,
  $\varrho_3:=\delta(B_{\h x}+B_{\h u})$. Since
\begin{align*}
\pmb{\xi}_{\h x}^{k+1} = & \ \frac{A^*({\h z}^{k+1} - {\h z}^{k}) +\nabla h({\h x}^{k+1}) - \nabla h({\h x}^{k}) +  \delta ({\h u}^{k}-{\h u}^{k+1})}{\langle K{\h x}^{k+1},{\h y}^{k+1}\rangle-f^*({\h y}^{k+1})}  \\
& + \! \frac{\Psi({\h x}^{k},{\h z}^{k},{\h u}^{k}, \delta, \gamma)  \frac{\langle K{\h x}^{k+1}, {\h y}^{k+1}\rangle-f^*({\h y}^{k+1})}{f(K{\h x}^k)}-\Psi({\h x}^{k+1},{\h z}^{k+1},{\h u}^{k+1}, \delta, \gamma)  }{(\langle K{\h x}^{k+1},{\h y}^{k+1}\rangle-f^*({\h y}^{k+1}))^2} K^*{\h y}^{k+1},
\end{align*}
we obtain
\begin{align*}
\|\pmb{\xi}_{\h x}^{k+1}\| \leq & \ \frac{1}{m}(L_{\nabla h}\|{\h x}^k-{\h x}^{k+1}\| + \|A\|\|{\h z}^{k}-{\h z}^{k+1}\| +\delta\|{\h u}^k-{\h u}^{k+1}\|) \\
& + \frac{B_{\h y}\|K\|}{m^2} \left | \Psi({\h x}^{k},{\h z}^{k},{\h u}^{k}, \delta, \gamma)  \left( \frac{\langle K{\h x}^{k+1}, {\h y}^{k+1}\rangle-f^*({\h y}^{k+1})}{f(K{\h x}^k)}- 1 \right) \right |\\
& + \frac{B_{\h y}\|K\|}{m^2} \left | \Psi({\h x}^{k},{\h z}^{k},{\h u}^{k}, \delta, \gamma) - \Psi({\h x}^{k+1},{\h z}^{k+1},{\h u}^{k+1}, \delta, \gamma)\right |.
\end{align*}
From
\begin{align*}
& \left | \Psi({\h x}^{k},{\h z}^{k},{\h u}^{k}, \delta, \gamma)  \left( \frac{\langle K{\h x}^{k+1}, {\h y}^{k+1}\rangle-f^*({\h y}^{k+1})}{f(K{\h x}^k)}- 1 \right) \right |\\
= & \left | \Psi({\h x}^{k},{\h z}^{k},{\h u}^{k}, \delta, \gamma)  \frac{\langle K({\h x}^{k+1}-{\h x}^{k}),{\h y}^{k+1}\rangle}{f(K{\h x}^k)} \right | \le \frac{B_{\Psi}B_{\h y}\|K\|}{m}\|{\h x}^k-{\h x}^{k+1}\|,
\end{align*}
and \eqref{boundPsi}, it yields
$ \|\pmb{\xi}_{\h x}^{k+1}\|\le \eta_1\|{\h x}^k-{\h x}^{k+1}\|+\eta_2\|{\h u}^k-{\h u}^{k+1}\|+\eta_3\|{\h z}^k-{\h z}^{k+1}\|,$
with $\eta_1:= \frac{L_{\nabla h}}{m} + \frac{B_{\Psi}B_{\h y}^2\|K\|^2}{m^3} + \varrho_1 \frac{B_{\h y}\|K\|}{m^2}$, $\eta_2:=\frac{\delta}{m} +  \varrho_3 \frac{B_{\h y}\|K\|}{m^2}$ and $\eta_3:=\frac{\|A\|}{m} +  \varrho_2 \frac{B_{\h y}\|K\|}{m^2}$.

\noindent In addition, we have that
 $\|\pmb{\xi}_{\h y}^{k+1}\|\le B_{\Psi}\frac{\|K\|}{m^2}\|{\h x}^k-{\h x}^{k+1}\| \ \mbox{and} \
 \|\pmb{\xi}_{\h u}^{k+1}\|\le \frac{\delta|1-\beta|}{m\beta}\|{\h u}^k-{\h u}^{k+1}\|,$
which, in the light of \eqref{firstbound}, leads to the conclusion.
\end{proof}
\vspace{-0.1cm}
\subsection{\texorpdfstring{$f$}{f} is  differentiable with Lipschitz continuous gradient over an open set containing \texorpdfstring{$K({\cal S})$}{KS} and \texorpdfstring{$g$}{g} is essentially strictly convex}\label{subsec62}
The workhorse of our analysis will be the merit function $\Pi$. The following statement is a direct consequence of \cite[Lemma 2.1 (ii)]{BDL}.

\begin{lemma} \label{GamFunCal2}
Suppose Assumption \ref{ass1} holds. Let $f$ be differentiable at $K \widehat{\h x} \in \inte(\dom f)$ for $\widehat{\h x} \in {\cal S}$, and
 $g^*$ be differentiable at ${\widehat{\h z}}\in\text{\rm int}({\text {\rm dom}}g^*)$. Denote $\alpha_1:=\Psi(\widehat{\h x},\widehat{\h z},\widehat{\h u}, \delta, \gamma)$ and $\alpha_2:= f(K\widehat{\h x})$, and suppose that $\alpha_1>0$. Then,
 {\small \begin{align*}
  {\hat\partial} \Pi(\widehat{\h x},\widehat{\h z},\widehat{\h u})
 \! = \!\!\left\{ \!\!(\pmb{\xi}_{\h x},\pmb{\xi}_{\h z},\pmb{\xi}_{\h u})\left|
 \begin{array}{l}
 {\pmb{\xi}_{\h x}}\in \displaystyle{\frac{\alpha_2(A^*{\widehat{\h z}}+\nabla h({\widehat{\h x}})+\partial \iota_{\cal S}(\widehat{\h x}) + \delta({\widehat{\h x}}-\widehat{\h u}))-\alpha_1 K^*\nabla f(K{\widehat{\h x}})}{(f(K{\widehat{\h x}}))^2}}\\[0.cm]
 \pmb{\xi}_{\h z} =\displaystyle{\frac{A\widehat{\h x}-\nabla g^*(\widehat{\h z})-\gamma{\widehat{\h z}}}{f(K{\widehat{\h x}})}}\\[0.cm]
 {\pmb{\xi}_{\h u}}=  \displaystyle{\frac{\delta(\widehat{\h u}-{\widehat{\h x}})}{f(K{\widehat{\h x}})}}
 \end{array}\right.\!\!
 \right\}.
 \end{align*}}
 \end{lemma}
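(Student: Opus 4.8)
The plan is to recognize $\Pi$ as a ratio whose numerator is $\Psi(\cdot,\cdot,\cdot,\delta,\gamma)$ and whose denominator is the composition $f\circ K$, both regarded as functions on the product space $\mathbb{R}^n\times\mathbb{R}^s\times\mathbb{R}^n$, and to invoke the Fr\'echet subdifferential formula for ratios from \cite[Lemma 2.1(ii)]{BDL} (equivalently, Lemma \ref{ratioC2}(i) applied on the product space). First I would verify the hypotheses at the base point $(\widehat{\h x},\widehat{\h z},\widehat{\h u})$: the denominator $f\circ K$ is finite and, by differentiability of $f$ at $K\widehat{\h x}\in\inte(\dom f)$, it is differentiable — hence calm — there, with $\alpha_2=f(K\widehat{\h x})>0$ guaranteed by Assumption \ref{ass1}(d); the numerator $\Psi$ is finite and continuous relative to its effective domain there. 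With $\alpha_1=\Psi(\widehat{\h x},\widehat{\h z},\widehat{\h u},\delta,\gamma)>0$, the ratio rule then yields
\begin{equation*}
{\hat\partial}\Pi(\widehat{\h x},\widehat{\h z},\widehat{\h u}) = \frac{{\hat\partial}\big(\alpha_2\,\Psi - \alpha_1\,(f\circ K)\big)(\widehat{\h x},\widehat{\h z},\widehat{\h u})}{(f(K\widehat{\h x}))^2}.
\end{equation*}

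Next I would compute the Fr\'echet subdifferential of $\alpha_2\Psi-\alpha_1(f\circ K)$ by isolating the single nonsmooth term. Writing out
\begin{align*}
\alpha_2\Psi - \alpha_1(f\circ K) = \; & \alpha_2\Big(\langle{\h z},A{\h x}\rangle - g^*({\h z}) + h({\h x}) + \tfrac{\delta}{2}\|{\h x}-{\h u}\|^2 - \tfrac{\gamma}{2}\|{\h z}\|^2\Big) \\
& - \alpha_1 f(K{\h x}) + \alpha_2\,\iota_{\cal S}({\h x}),
\end{align*}
every summand except $\alpha_2\,\iota_{\cal S}({\h x})$ is Fr\'echet differentiable at $(\widehat{\h x},\widehat{\h z},\widehat{\h u})$: this uses differentiability of $f$ at $K\widehat{\h x}$ for the $-\alpha_1 f(K{\h x})$ term, differentiability of $g^*$ at $\widehat{\h z}\in\inte(\dom g^*)$ for the $-\alpha_2 g^*({\h z})$ term, and the smoothness of the bilinear, quadratic and $h$ terms. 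Denoting by $\Theta$ this differentiable part, the exact sum rule for the Fr\'echet subdifferential (additivity in the presence of a differentiable summand, cf.\ the inclusion used in Lemma \ref{ratioC2}(ii), which here is an equality) gives
\begin{equation*}
{\hat\partial}\big(\alpha_2\Psi - \alpha_1(f\circ K)\big)(\widehat{\h x},\widehat{\h z},\widehat{\h u}) = \alpha_2\,\partial\iota_{\cal S}(\widehat{\h x})\times\{\h{0}\}\times\{\h{0}\} + \nabla\Theta(\widehat{\h x},\widehat{\h z},\widehat{\h u}),
\end{equation*}
where $\partial\iota_{\cal S}={\hat\partial}\iota_{\cal S}$ by convexity of $\cal S$ and the indicator contributes only to the ${\h x}$-block.

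Finally I would read off the three blocks of $\nabla\Theta$: differentiating in ${\h x}$ produces $\alpha_2\big(A^*\widehat{\h z}+\nabla h(\widehat{\h x})+\delta(\widehat{\h x}-\widehat{\h u})\big)-\alpha_1 K^*\nabla f(K\widehat{\h x})$, in ${\h z}$ it produces $\alpha_2\big(A\widehat{\h x}-\nabla g^*(\widehat{\h z})-\gamma\widehat{\h z}\big)$, and in ${\h u}$ it produces $\alpha_2\,\delta(\widehat{\h u}-\widehat{\h x})$. Dividing by $(f(K\widehat{\h x}))^2=\alpha_2^2$, one factor of $\alpha_2$ cancels in the ${\h z}$- and ${\h u}$-blocks, leaving denominator $f(K\widehat{\h x})$, while the ${\h x}$-block retains the denominator $(f(K\widehat{\h x}))^2$; this is precisely the announced description of ${\hat\partial}\Pi$. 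The main obstacle I anticipate is not the gradient bookkeeping but justifying that both steps are genuine equalities rather than inclusions: the ratio formula relies on calmness of $f\circ K$ together with continuity of $\Psi$ at the base point (the hypotheses of Lemma \ref{ratioC2}(i)), and the sum rule is exact only because $f$ and $g^*$ are truly differentiable at the relevant points — Fr\'echet, and for the convex $g^*$ automatically continuously so at an interior point of $\dom g^*$ — since the additivity ${\hat\partial}(\phi+\psi)={\hat\partial}\phi+\nabla\psi$ may fail without differentiability of the smooth summand.
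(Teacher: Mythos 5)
Your proposal is correct and takes essentially the same route as the paper: the paper proves this lemma simply by declaring it a direct consequence of \cite[Lemma 2.1(ii)]{BDL} (the Fr\'echet subdifferential ratio rule, i.e.\ the analogue of Lemma~\ref{ratioC2} on the product space), which is exactly the key tool you invoke. Your additional bookkeeping --- verifying calmness of $f\circ K$ and relative continuity of $\Psi$, isolating $\alpha_2\iota_{\cal S}$ as the sole nonsmooth summand, applying the exact sum rule with the differentiable part $\Theta$, and cancelling one factor of $\alpha_2$ in the $\h z$- and $\h u$-blocks --- is precisely the verification the paper leaves implicit.
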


\begin{theorem}\label{residualDiffF}
Suppose that $f$ is differentiable with Lipschitz continuous gradient on an open set containing $K({\cal S})$, and $g$ is   essentially strictly convex. Then there exists  $\zeta>0$ such that for all $k\ge {K}_1$
\begin{eqnarray*}&&{\text{\rm dist}}(\h 0,\partial\Pi ({\h x}^{k+1},{\h z}^{k+1},{\h u}^{k+1}))
\le \zeta(\|{\h x}^k-{\h x}^{k+1}\|+\|{\h u}^k-{\h u}^{k+1}\|+\|{\h z}^k -{\h z}^{k+1}\|).\end{eqnarray*}
\end{theorem}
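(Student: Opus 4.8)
The plan is to mirror the proof of Theorem \ref{residual}, replacing the merit function $\Gamma$ together with Lemma \ref{GamFunCal} by the merit function $\Pi$ together with Lemma \ref{GamFunCal2}. Throughout, abbreviate $f_k:=f(K{\h x}^k)$ and $\Psi_k:=\Psi({\h x}^k,{\h z}^k,{\h u}^k,\delta,\gamma)$. First I would reduce to the Fr\'echet subdifferential via ${\text{\rm dist}}(\h 0,\partial\Pi)\le{\text{\rm dist}}(\h 0,\hat\partial\Pi)$, and then verify at $({\h x}^{k+1},{\h z}^{k+1},{\h u}^{k+1})$ the hypotheses of Lemma \ref{GamFunCal2}: $f$ is differentiable at $K{\h x}^{k+1}$ because ${\h x}^{k+1}\in{\cal S}$ and $f$ is differentiable on an open set containing $K({\cal S})$; $g^*$ is differentiable at ${\h z}^{k+1}$ because $g$ essentially strictly convex makes $g^*$ essentially smooth \cite[Theorem 26.3]{Rock70}, while the third relation in \eqref{systemk} forces ${\h z}^{k+1}\in{\rm int}(\dom g^*)$; and $\alpha_1=\Psi_{k+1}>0$ holds by construction of the algorithm.

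Next I would exhibit an explicit triple in $\hat\partial\Pi({\h x}^{k+1},{\h z}^{k+1},{\h u}^{k+1})$ produced by the optimality system \eqref{systemk}. Differentiability of $g^*$ gives $\nabla g^*({\h z}^{k+1})=A{\h x}^{k+1}-\gamma{\h z}^{k+1}$, so the ${\h z}$-component vanishes, $\pmb{\xi}_{\h z}^{k+1}=0$; and the extrapolation identity ${\h u}^{k+1}-{\h x}^{k+1}=\tfrac{\beta-1}{\beta}({\h u}^{k+1}-{\h u}^k)$ yields $\|\pmb{\xi}_{\h u}^{k+1}\|\le\frac{\delta|1-\beta|}{m\beta}\|{\h u}^k-{\h u}^{k+1}\|$, exactly as in Theorem \ref{residual}. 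For the ${\h x}$-component I would take the subgradient $-(A^*{\h z}^k+\nabla h({\h x}^k)-\theta_k K^*{\h y}^{k+1}+\delta({\h x}^{k+1}-{\h u}^k))\in\hat\partial\iota_{\cal S}({\h x}^{k+1})$ coming from the second relation of \eqref{systemk}, so that the ``non-curvature'' part of the numerator telescopes into $A^*({\h z}^{k+1}-{\h z}^k)$, $\nabla h({\h x}^{k+1})-\nabla h({\h x}^k)$ and $\delta({\h u}^k-{\h u}^{k+1})$, which are bounded by $\|{\h z}^k-{\h z}^{k+1}\|$, $L_{\nabla h}\|{\h x}^k-{\h x}^{k+1}\|$ and $\delta\|{\h u}^k-{\h u}^{k+1}\|$, all divided by $f_{k+1}\ge m$.

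The crux, and the point where the new hypothesis on $f$ is used, is the remaining curvature term $\frac{\theta_k K^*{\h y}^{k+1}}{f_{k+1}}-\frac{\Psi_{k+1}K^*\nabla f(K{\h x}^{k+1})}{f_{k+1}^2}$. Substituting ${\h y}^{k+1}=\nabla f(K{\h x}^k)$ and $\theta_k=\Psi_k/f_k$, I would split it as $\frac{\Psi_k K^*(\nabla f(K{\h x}^k)-\nabla f(K{\h x}^{k+1}))}{f_k f_{k+1}}+K^*\nabla f(K{\h x}^{k+1})\left(\frac{\Psi_k}{f_k f_{k+1}}-\frac{\Psi_{k+1}}{f_{k+1}^2}\right)$. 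The first summand is controlled by the Lipschitz continuity of $\nabla f$, namely $\|\nabla f(K{\h x}^k)-\nabla f(K{\h x}^{k+1})\|\le L_{\nabla f}\|K\|\|{\h x}^k-{\h x}^{k+1}\|$. For the second summand I would write $\Psi_k f_{k+1}-\Psi_{k+1}f_k=\Psi_k(f_{k+1}-f_k)+f_k(\Psi_k-\Psi_{k+1})$ and bound $|f_{k+1}-f_k|\le L_f\|K\|\|{\h x}^k-{\h x}^{k+1}\|$, using that $f$ is Lipschitz on the compact convex set $K({\cal S})$, together with the estimate \eqref{boundPsi} for $|\Psi_k-\Psi_{k+1}|$; the latter remains valid here since $g$ essentially strictly convex makes $g^*$ Lipschitz on the compact closure of $\{{\h z}^k\}\subseteq{\rm int}(\dom g^*)$, and since $m\le f_k\le M$, $|\Psi_k|\le B_\Psi$ and the four sequences are bounded.

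Assembling these estimates produces $\|\pmb{\xi}_{\h x}^{k+1}\|\le\eta_1\|{\h x}^k-{\h x}^{k+1}\|+\eta_2\|{\h u}^k-{\h u}^{k+1}\|+\eta_3\|{\h z}^k-{\h z}^{k+1}\|$ for suitable constants $\eta_1,\eta_2,\eta_3$, and since $\pmb{\xi}_{\h z}^{k+1}=0$, the inequality ${\text{\rm dist}}(\h 0,\hat\partial\Pi)\le\|\pmb{\xi}_{\h x}^{k+1}\|+\|\pmb{\xi}_{\h u}^{k+1}\|$ gives the claim with $\zeta$ the sum of the relevant constants. I expect the main obstacle to be precisely the mismatch between the gradient $\nabla f(K{\h x}^k)$ delivered by the optimality condition at the previous iterate and the gradient $\nabla f(K{\h x}^{k+1})$ required by the subdifferential formula of $\Pi$: in Theorem \ref{residual} the variable ${\h y}^{k+1}$ enters linearly and no such discrepancy occurs, whereas here it is resolved only by invoking the Lipschitz continuity of $\nabla f$, which is exactly the extra assumption of this theorem.
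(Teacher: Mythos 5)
Your proposal is correct and follows essentially the same route as the paper's proof: reduction to the Fr\'echet subdifferential, verification of the hypotheses of Lemma \ref{GamFunCal2} via essential smoothness of $g^*$ and the system \eqref{systemk}, the same explicit elements $(\pmb{\xi}_{\h x}^{k+1},\pmb{\xi}_{\h z}^{k+1},\pmb{\xi}_{\h u}^{k+1})$ with $\pmb{\xi}_{\h z}^{k+1}=0$, and the same three estimates (Lipschitz continuity of $\nabla f$ and of $f$ on $K(\mathcal{S})$, together with \eqref{boundPsi}) to control the curvature term. Your algebraic split of $\theta_k f_{k+1}\nabla f(K{\h x}^k)-\Psi_{k+1}\nabla f(K{\h x}^{k+1})$ is exactly the decomposition implicit in the paper's bound with coefficients $\frac{B_\Psi B_{\nabla f}\|K\|}{m^3}$, $\frac{B_\Psi\|K\|}{m^2}$ and $\frac{B_{\nabla f}\|K\|}{m^2}$, so the two arguments coincide.
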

\begin{proof} The proof is similar to Theorem \ref{residual}, thus omitted here.

 \end{proof}

\begin{remark}[Comments on the assumption of essential strict convexity] \label{constr}
The assumption of $g$ being essentially strictly convex can be enforced without loss of generality, by redefining the functions $g$ and $h$ as ${\widetilde g}({\h x}):=g({\h x})+\frac{s}{2}\|{\h x}\|^2$ and
${\widetilde h}({\h x}):=h({\h x})-\frac{s}{2}\|A{\h x}\|^2$ with $s>0$.
 Thus,  $\tilde g$ is strongly convex, and so, essentially strictly convex.
 In our numerical experiments, we noticed that, for small $s>0$, the algorithm exhibits comparable (or simply the same) numerical performance as for $s=0$.
\end{remark}

\begin{remark}\label{rem66}
We  require that either $f^*$ satisfies the calm condition over its effective domain or $f$ is differentiable with Lipschitz continuous gradient over an open set containing \texorpdfstring{$K({\cal S})$}{KS}. These conditions can be satisfied in many applications. For example, if $f$ is supercocercive, that is, $\lim_{\|\h x\| \rightarrow +\infty} \frac{f(\h x)}{\|\h x\|}=+\infty$, then $f^*$ is a real-valued convex function with full domain \cite[Proposition 14.15]{BC17b}, and so, it is locally Lipschitz (and, in particular, calm). This applies, for instance, to example (b) in the introduction. Regarding example (a), if $p \in (1,+\infty)$, noting that \texorpdfstring{$K({\cal S})$}{KS} is a compact set which does not contain the origin, then  $f=\|\cdot\|_p$ is differentiable with Lipschitz continuous gradient over an open set containing \texorpdfstring{$K({\cal S})$}{KS}.
\end{remark}

\subsection{Global convergence}\label{subsec63}

The proof of the global convergence theorem is in line with the techniques and ideas of \cite[Theorem 4]{LiPong15} and \cite[Theorem 3.4]{BCN19}.

\begin{theorem} \label{theo8}
Let $\varepsilon >0$. Suppose Assumption \ref{ass1} and Assumption \ref{ass2}  hold, $K_{\varepsilon}:=\{{\h x} \ | \ {\text{\rm dist}}({\h x},A({\cal S})) \le \varepsilon\} \subseteq \text{\rm int} (\dom g)$, $g$ is nonsmooth and essentially strictly convex and one of the following conditions are fulfilled:
  \begin{itemize}
  \item[{\rm (i)}] $f^*$ satisfies the calm condition over its effective domain and $\Gamma$ satisfies the KL property at every point of $K_{\varepsilon} \cap {\rm dom} \, \partial (\Gamma)$.
 \item[{\rm (ii)}] $f$ is differentiable with Lipschitz continuous gradient over an open set containing $K({\cal S})$ and $\Pi$ satisfies the KL property at every point of $K_{\varepsilon} \cap {\rm dom} \, \partial (\Pi)$.
     \end{itemize}
Let $\{{\bm W}^k=({\h x}^k,{\h y}^k,{\h z}^k,{\h u}^k)\}$ be the sequence generated by Algorithm \ref{subx:admm:Bes}. Then,
$$
\sum_{k} \Big(\|{\h x}^k-{\h x}^{k+1}\| + \|{\h u}^k-{\h u}^{k+1}\| + \|{\h z}^k -{\h z}^{k+1}\| \Big) <+\infty,
$$
and $\{\h x^k\}$ converges to a limiting $(2\kappa\varepsilon,(2\kappa+1)\varepsilon)$-lifted stationary point of (\ref{PForm}), where $\kappa$ is the Lipschitz constant of $g$ on $K_{\varepsilon}$.
\end{theorem}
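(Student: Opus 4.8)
The plan is to run the standard Kurdyka--\L ojasiewicz (KL) descent argument of \cite{ABRS,ABS,LiPong15,BN20}, using a single Lyapunov function $H:=\Gamma$ in setting (i) and $H:=\Pi$ in setting (ii). First I would collect the three ingredients already in hand for $k\ge K_1$, writing $\Delta_k:=\|{\h x}^k-{\h x}^{k+1}\|+\|{\h u}^k-{\h u}^{k+1}\|+\|{\h z}^k-{\h z}^{k+1}\|$. The \emph{sufficient decrease} is Theorem~\ref{subsequential}(i) for $\Gamma$, and for $\Pi$ it is exactly \eqref{keydestheta} together with the identity $\Pi({\h x}^k,{\h z}^k,{\h u}^k)=\theta_k$; since the sum of the three squared increments dominates $\tfrac13\Delta_k^2$, both cases give $H({\bm W}^{k+1})\le H({\bm W}^k)-\tfrac{c}{3}\Delta_k^2$ for some $c>0$. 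The \emph{relative error} bound ${\text{\rm dist}}(\h 0,\partial H({\bm W}^{k}))\le \zeta\Delta_{k-1}$ is Theorem~\ref{residual} in setting (i) and Theorem~\ref{residualDiffF} in setting (ii). Finally, Theorems~\ref{subsequentialtoStat} and \ref{subsequential} give that $H({\bm W}^k)\downarrow\overline\theta$ and that $H\equiv\overline\theta$ on the compact accumulation set $\Omega$. A key point to record is that both the decrease and the residual are controlled \emph{solely} by the three-variable residual $\Delta_k$, so the dual block ${\h y}^k$ never needs to appear in the telescoping.

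If $H({\bm W}^{k_0})=\overline\theta$ for some $k_0\ge K_1$, the descent inequality forces $\Delta_k=0$ for all $k\ge k_0$ and the claim is immediate; so I may assume $H({\bm W}^k)>\overline\theta$ for all $k$. Boundedness (Theorem~\ref{subsequentialtoStat}(ii)) makes $\Omega$ nonempty and compact, with ${\text{\rm dist}}({\bm W}^k,\Omega)\to0$. Taking $v^{k}\in\partial H({\bm W}^{k})$ with $\|v^{k}\|\le\zeta\Delta_{k-1}\to0$ along any convergent subsequence, using $H({\bm W}^{k})\to\overline\theta=H(\overline{\bm W})$ and closedness of the graph of the limiting subdifferential, gives $\h 0\in\partial H(\overline{\bm W})$; hence $\Omega\subseteq\dom\partial H$ and the assumed KL property is available at each point of $\Omega$. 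Applying the uniformized KL property on the compact set $\Omega$ (on which $H$ is constant) then yields a single concave desingularizing $\phi$ and a neighborhood such that, for all large $k$ (via ${\text{\rm dist}}({\bm W}^k,\Omega)\to0$ and $H({\bm W}^k)\downarrow\overline\theta$), $\phi'(H({\bm W}^k)-\overline\theta)\,{\text{\rm dist}}(\h 0,\partial H({\bm W}^k))\ge1$.

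The telescoping is then routine. Concavity gives $\phi(H({\bm W}^k)-\overline\theta)-\phi(H({\bm W}^{k+1})-\overline\theta)\ge \phi'(H({\bm W}^k)-\overline\theta)\,(H({\bm W}^k)-H({\bm W}^{k+1}))$; bounding $\phi'$ below through the KL inequality and the relative error, and $H({\bm W}^k)-H({\bm W}^{k+1})$ below by $\tfrac{c}{3}\Delta_k^2$, yields $\Delta_k^2\le \tfrac{3\zeta}{c}\,\Delta_{k-1}\big[\phi(H({\bm W}^k)-\overline\theta)-\phi(H({\bm W}^{k+1})-\overline\theta)\big]$. The arithmetic--geometric mean inequality turns this into $2\Delta_k\le \Delta_{k-1}+\tfrac{3\zeta}{c}\big[\phi(H({\bm W}^k)-\overline\theta)-\phi(H({\bm W}^{k+1})-\overline\theta)\big]$, and summing over $k$ makes the right-hand side telescope, so $\sum_k\Delta_k<+\infty$. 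Consequently $\{{\h x}^k\},\{{\h u}^k\},\{{\h z}^k\}$ are Cauchy and converge; writing ${\h x}^k\to{\h x}^*$, the limit ${\h x}^*$ is in particular an accumulation point of $\{{\h x}^k\}$, so Theorem~\ref{subsequentialtoStat}(iva) identifies it as a limiting $(2\kappa\varepsilon,(2\kappa+1)\varepsilon)$-lifted stationary point, and Theorem~\ref{subsequentialtoStat}(ivb) delivers the sharpened $\big(\tfrac{L_{\nabla g}\varepsilon^2}{2},\tfrac{L_{\nabla g}\varepsilon^2}{2}+\varepsilon\big)$ conclusion in the differentiable case.

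The step I expect to be the main obstacle is the correct deployment of the uniformized KL property in setting (i): because the Lyapunov estimates ignore the ${\h y}$-block, the full increment $\|{\bm W}^{k+1}-{\bm W}^k\|$ need not tend to zero (only its ${\h x},{\h u},{\h z}$ components do, by \eqref{acc1}), so the usual route through connectedness of $\Omega$ is unavailable. The remedy is to avoid connectedness altogether and rely only on ${\text{\rm dist}}({\bm W}^k,\Omega)\to0$, which holds for any bounded sequence, together with $H\equiv\overline\theta$ on the whole compact set $\Omega$; this is exactly what the uniformized lemma requires. In setting (ii) this difficulty disappears since $\Pi$ does not depend on ${\h y}$.
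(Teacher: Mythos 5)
Your proposal is correct and follows essentially the same route as the paper: the same Lyapunov functions ($\Gamma$ in setting (i), $\Pi$ in setting (ii) via \eqref{keydestheta} and $\theta_k=\Pi({\h x}^k,{\h z}^k,{\h u}^k)$), the same two-case split on whether the merit value hits $\overline\theta$ in finite time, the uniformized KL property on the compact set $\Omega$ (relying only on ${\rm dist}({\bm W}^k,\Omega)\to 0$ and constancy of the merit function on $\Omega$, exactly as the paper does), and the conclusion via Theorem \ref{subsequentialtoStat}(iva)/(ivb). The only deviation is in the telescoping: your AM-GM step correctly accounts for the one-iteration lag in the relative-error bound (Theorem \ref{residual} controls ${\rm dist}(\h 0,\partial\Gamma({\bm W}^{k+1}))$ by the increments from $k$ to $k+1$, hence ${\rm dist}(\h 0,\partial\Gamma({\bm W}^{k}))$ by $\Delta_{k-1}$), whereas the paper's displayed chain treats this residual as if it were bounded by $\Delta_k$ and telescopes directly --- so your version is, if anything, the more careful rendering of the same argument.
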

\begin{proof}
We  prove the statement only in the setting of assumption (i). The proof of the other case can be done analogously.
The sequence $\{ \Gamma ({\h x}^{k},{\h y}^{k},{\h z}^{k},{\h u}^{k})\}_{k \geq K_1}$ is nonincreasing and it converges to ${\bar{\theta}}$ as $k \rightarrow +\infty$. Thus, $\Gamma ({\h x}^{k},{\h y}^{k},{\h z}^{k},{\h u}^{k})\ge {\bar{\theta}}$ for all $k \geq K_1$, which allows us to divide the proof into two cases.
Case I. There exists $K_2 \geq K_1$ such that $\Gamma ({\h x}^k,{\h y}^k,{\h z}^k,{\h u}^k) = {\bar{\theta}}$ for  $k\ge K_2$. Then,
$({\h x}^{k+1},{\h z}^{k+1},{\h u}^{k+1}) = ({\h x}^{k},{\h z}^{k},{\h u}^{k})$ for all $k\ge K_2$ due to (\ref{gammaDesc}), and the conclusion follows.
Case II. $ \Gamma ({\h x}^{k},{\h y}^{k},{\h z}^{k},{\h u}^{k})>{\bar{\theta}}$ for all $k \geq K_1$. Let $\Omega $ denote the set of accumulation points of $\{({\h x}^k, {\h y}^k, {\h z}^k, {\h u}^k)\}$. Then, $\Omega$ is compact. Invoking Theorem \ref{subsequentialtoStat} (iii),
according to the uniformized KL property \cite{BST14}, there exist $\varrho>0$ and $\mu>0$ and a desingularization function $\phi$ with the property that for all $(\h x, \h y, \h z, \h u)$ with  ${\text{dist}}((\h x, \h y, \h z, \h u), \Omega)<\varrho$ and ${\bar{\theta}}<\Gamma ({\h x},{\h y},{\h z},{\h u}) <{\bar{\theta}}+\mu$, it holds $\phi'(\Gamma(\h x, \h y, \h z, \h u) - {\bar{\theta}}){\text{\rm{dist}}}(\h 0,\partial \Gamma (\h x, \h y, \h z, \h u)) \ge 1$. Then, there exists $K_2 \geq K_1$ such that
${\text{dist}}(({\h x}^k, {\h y}^k, {\h z}^k, {\h u}^k), \Omega)<\varrho$ and ${\bar{\theta}}<\Gamma ({\h x}^k, {\h y}^k, {\h z}^k, {\h u}^k)<{\bar{\theta}}+\mu$ for all $k \geq K_2$.

Thus, by using Theorem \ref{subsequential} and Theorem \ref{residual}, for all $k \geq K_2$ it holds
\begin{eqnarray*}& \ & \phi(\Gamma({\h x}^k,{\h y}^k,{\h z}^k,{\h u}^k) - {\bar{\theta}}) -\phi(\Gamma({\h x}^{k+1},{\h y}^{k+1},{\h z}^{k+1},{\h u}^{k+1})-{\bar{\theta}})\\
&\ge & \ \phi'(\Gamma ({\h x}^k,{\h y}^k,{\h z}^k,{\h u}^k) - {\bar{\theta}})\left(\Gamma({\h x}^k,{\h y}^k,{\h z}^k,{\h u}^k) - \Gamma({\h x}^{k+1},{\h y}^{k+1},{\h z}^{k+1},{\h u}^{k+1})\right)\\
&\ge & \ \frac{c}{{\text{\rm{dist}}}(\h 0,\partial \Gamma ({\h x}^k,{\h y}^k,{\h z}^k,{\h u}^k))} \left(\|{\h x}^k-{\h x}^{k+1}\|^2+\|{\h u}^k-{\h u}^{k+1}\|^2+\|{\h z}^k-{\h z}^{k+1}\|^2 \right)\\
&\ge &  \frac{c}{3\zeta} \frac{(\|{\h x}^k-{\h x}^{k+1}\|+\|{\h u}^k-{\h u}^{k+1}\|+\|{\h z}^k-{\h z}^{k+1}\|)^2}{(\|{\h x}^k-{\h x}^{k-1}\|+\|{\h u}^k-{\h u}^{k-1}\|+\|{\h z}^k-{\h z}^{k-1}\|)},
\end{eqnarray*}
where $c$ and $\zeta$ are given as in Theorems \ref{subsequential} and \ref{residual}.

 By denoting $\delta_{{\h x},{\h z},{\h u}}^k:=\|{\h x}^k-{\h x}^{k+1}\|+\|{\h u}^k-{\h u}^{k+1}\|+\|{\h z}^k-{\h z}^{k+1}\|$, it follows that for all $k \geq K_2$
\begin{align*} 2\delta_{{\h x},{\h z},{\h u}}^k\le & \  2\sqrt{ \frac{3\zeta}{c}\left(\phi(\Gamma ({\h x}^k,{\h y}^k,{\h z}^k,{\h u}^k) - {\bar{\theta}}) -\phi(\Gamma ({\h x}^{k+1},{\h y}^{k+1},{\h z}^{k+1},{\h u}^{k+1})-{\bar{\theta}}) \right)
 \delta_{{\h x},{\h z},{\h u}}^{k-1}}\nn\\
\le & \ \delta_{{\h x},{\h z},{\h u}}^{k-1}+  \frac{3\zeta}{ c}\left(\phi(\Gamma ({\h x}^k,{\h y}^k,{\h z}^k,{\h u}^k) - {\bar{\theta}}) -\phi(\Gamma ({\h x}^{k+1},{\h y}^{k+1},{\h z}^{k+1},{\h u}^{k+1})-{\bar{\theta}})\right).
\end{align*}
So, the conclusion follows.
\end{proof}
\begin{remark}\label{semialgebraic}
If ${\cal S}$ is a semialgebraic set and $f$, $g$ and $h$ are semialgebraic functions (that is, their graphs can be written as finite union or intersection of sets described by polynomial inequalities), then $\Gamma$ and $\Pi$ are also a semialgebraic functions.  So, they satisfy the KL property at every point of the domain of their subdifferential \cite{ABRS,ABS}.
\end{remark}

\begin{remark}\label{rationale}
As seen in the proof of Theorem \ref{subsequentialtoStat}, every accumulation point $(\overline{\h x}, \overline{\h y}, \overline{\h z}, \overline{\h u})$  of the sequence $({\h x}^k, {\h y}^k, {\h z}^k, {\h u}^k)$ need to fulfill the system of optimality conditions (\ref{KKT}). Due to the existence of $\gamma > 0$ in the third inclusion of (\ref{KKT}), it is not feasible to anticipate $\overline{\h x}$ as an exact limiting lifted stationary point of (\ref{PForm}).

 Since the sequence of $\gamma_k$ is non-increasing, the convergence or divergence of the FSPS algorithm is the same whether with $\gamma_k\equiv0$ or
 $\gamma_k=0$ for $k\ge k_0$.
A question arises regarding whether setting the sequence ${\gamma_k}$ to identically zero in the conceptual algorithm FSPS generates a sequence converging to  exact limiting lifted stationary points. The following example demonstrates that this is not the case.
For the fourth update block  in FSPS when $\gamma_k=0$, we will choose ${\h{z}}^{k+1}$ as the minimum norm solution.

 \begin{example} \label{ex6dot10} Consider the problem (\ref{PForm}) for  ${\cal S}={[0,1]^2} \subseteq \R^2$, $A=K=I$ where $I$ is the identity mapping, and $g,h,f:\mathbb{R}^2 \rightarrow \mathbb{R}$ are given by
$g({\h x}) = \|{\h x}\|_{1}$, $h({\h x}) = \frac{1}{2}\|{\h x}\|^2+{\h e}^\top {\h x}+\frac{1}{2}$ and  $f({\h x}) = {\h e}^\top {\h x}+\frac{1}{2}$, where ${\h e} = (1,1)^\top$. For $\beta=1$, $\gamma_k \equiv 0$, $\delta_k \equiv 1$, $\theta_0:=2$ and ${\h z}^0 = {\h u}^0 = {\h x}^0:= (1,0)^\top$, FSPS generates a sequence $({\h x}^k, {\h y}^k, {\h z}^k, {\h u}^k)$ such that
$${\h z}^k = {\h u}^k = {\h x}^k=\left\{\begin{array}{rl} (0,1)^\top, & \mbox{if} \ k \ \mbox{is odd},\\
(1,0)^\top, & \mbox{if} \ k \ \mbox{is even}, \end{array}
\right. \quad {\h y}^k={\h e} \quad \mbox{and} \quad \theta_k=2 \quad \forall k \geq 1. $$
One can verify that neither $(1,0)^\top$ nor $(0,1)^\top$ is a limiting lifted stationary point of Example \ref{ex6dot10}. Thus,
even the {\it subsequential convergence to an exact limiting lifted stationary point} cannot be guaranteed in this case.
 On the other hand, even if the parameter $\gamma_k\downarrow 0$ with $\gamma_k> 0$ for all $k$, any accumulation point of
the sequence generated by the FSPS may not be a limiting lifted stationary point, we refer to Appendix \ref{counter1} for more detail.
\end{example}
\end{remark}

\section{Further discussions}
\label{sect7}
 As shown in Theorem \ref{theo8}, under the assumptions there, the sequence $\{\h x^k\}$ generated by the Adaptive FSPS (Algorithm \ref{subx:admm:Bes}) converges to an approximate lifted stationary point of \eqref{PForm}.  It is interesting to see when the basic algorithm FSPS can converge to an exact limiting lifted stationary point.
From Example \ref{ex6dot10}, we observe that none of the accumulation points of the sequence generated by the FSPS algorithm (with \(\gamma_k = 0\) for \(k \geq k_0\)) is a limiting lifted stationary point when \(g\) is nonsmooth.


Consider the conceptual algorithm FSPS  with $\gamma_k\equiv 0$ reads for all $k \geq 0$:

\begin{equation}\label{Z-FSPSO}
\vspace{-0.16cm} \left \{
\begin{array}{rcl}
{\h y}^{k+1} & \in & \partial f(K{\h x}^k)\\[0.1cm]
 \h x^{k+1} & = & {\text{Proj}}_{\cal S}\left({\h u}^k+\frac{\tilde\theta_k}{\delta_k}K^* {\h y}^{k+1}-\frac{1}{\delta_k}\nabla h({\h x}^k)-\frac{1}{\delta_k}A^* {\h z}^k\right),\\[0.1cm]
{\h u}^{k+1} & = & (1-\beta) {\h u}^k+\beta {\h x}^{k+1},\\[0.1cm]
{\h{z}}^{k+1} & = &\arg\min_{\h z} \left[ g^*({\h z})- \langle A{\h x}^{k+1}, {\h z} \rangle\right],\\[0.1cm]
\tilde\theta_{k+1} & = & \displaystyle{\frac{\tilde{\rm\Psi}({\h x}^{k+1},{\h z}^{k+1},{\h u}^{k+1};\delta_k)}{f(K{\h x}^{k+1})}}, \vspace{-0.13cm}
\end{array}
\right.
\end{equation}
where
${\tilde\Psi}({\h x},{\h z},{\h u}, \delta)\!:=\!\langle{\h z},A{\h x} \rangle-g^*({\h z})+h({\h x})+\iota_{\cal S}({\h x}) + \frac{\delta}{2}\|{\h x}-{\h u}\|^2$.\footnote{Note that $\gamma_k\equiv 0$. Then, the function $\Psi$ in (\ref{FSPSO}) reduces to $\tilde \Psi$.}

Next, we show that the sequence $\{\h x^k\}$ generated by \eqref{Z-FSPSO} converges to an exact limiting lifted stationary point of \eqref{PForm} if $g$ is further assumed to be $\ell$-smooth.
(i.e., $g$ is differentiable and its gradient $\nabla g$ is Lipschitz continuous with constant $\ell > 0$). 

\begin{theorem} \label{Excttheo8}
Suppose Assumption \ref{ass1} holds,  $g$ is $\ell$-smooth ($\ell >0 $) and essentially strictly convex, and one of the following conditions is fulfilled:
  \begin{itemize}
  \item[{\rm (i)}] $f^*$ satisfies the calm condition over its effective domain and $\Gamma$ satisfies the KL property at every point of ${\rm dom} (\partial  \Gamma)$.
 \item[{\rm (ii)}] $f$ is differentiable with Lipschitz continuous gradient over an open set containing $K({\cal S})$ and $\Pi$ satisfies the KL property at every point of ${\rm dom} (\partial   \Pi)$.
     \end{itemize}
Let $0<\beta<2$,  \!\! $\nu>0$, \!$\delta_k\equiv \delta \!\! :=2\nu + L_{\nabla h} + 2\ell \|A\|^2$  \!\! \!\! for $k \geq 0$, and $\{{\bm W}^k=({\h x}^k,{\h y}^k,{\h z}^k,{\h u}^k)\}$ be the sequence generated by (\ref{Z-FSPSO}).
Then,
$\sum_{k}\big(\|{\h x}^k-{\h x}^{k+1}\| + \|{\h u}^k-{\h u}^{k+1}\| + \|{\h z}^k -{\h z}^{k+1}\|\big)<+\infty,$
and $\{\h x^k\}$ converges to a limiting lifted stationary point of (\ref{PForm}).
\end{theorem}

\begin{proof} First, analogous to the proof to (\ref{psides1}),  one can show that for all $k \geq 0$
\begin{align*} & \ {\tilde\Psi}({\h x}^{k+1},{\h z}^k,{\h u}^k, \delta) +\tilde\theta_k\left[f(K{\h x}^k)- (\langle K{\h x}^{k+1},{\h y}^{k+1} \rangle -f^*({\h y}^{k+1}) )\right]\nn\\
 \le & \ {\tilde\Psi}({\h x}^{k},{\h z}^k,{\h u}^k, \delta) -\frac{\delta-L_{\nabla h}}{2}\|{\h x}^{k+1}-{\h x}^k\|^2. \vspace{-0.1cm}\end{align*}

Second, using that $g^*$ is  strongly convex with modulus $\frac{1}{\ell}$ and the optimality condition of ${\h z}^{k+1}$ in (\ref{Z-FSPSO}), it holds for all $k \geq 0$
\begin{align*} \langle A{\h x}^{k+1}, {\h z}^{k+1}\rangle-g^*({\h z}^{k+1}) \le \ & \langle A{\h x}^{k+1}, {\h z}^{k}\rangle-g^*({\h z}^{k})+\langle{\h z}^{k+1}-{\h z}^k, A{\h x}^{k+1}-A{\h x}^k\rangle \nn\\
& -\frac{1}{2\ell}\|{\h z}^k-{\h z}^{k+1}\|^2.\end{align*}
So,
${\tilde\Psi}({\h x}^{k+1},{\h z}^{k+1},{\h u}^{k}, \delta)\le{\tilde\Psi}({\h x}^{k+1},{\h z}^{k},{\h u}^{k}, \delta)+\ell\|A\|^2\|{\h x}^{k+1}-{\h x}^k\|^2-\frac{1}{4\ell}\|{\h z}^k-{\h z}^{k+1}\|^2$.

Third,  analogous to the proof to (\ref{PriTheo:Phiu}), we have for all $k \geq 0$
\begin{eqnarray*}
{\tilde\Psi}({\h x}^{k+1},{\h z}^{k+1},{\h u}^{k+1}, \delta)\le{\tilde\Psi}({\h x}^{k+1},{\h z}^{k+1},{\h u}^{k}, \delta)-\frac{\delta(2-\beta)}{2\beta}\|{\h u}^k-{\h u}^{k+1}\|^2.
\end{eqnarray*}
Combining these three relations gives us that for all $k \geq 0$
\begin{align*}
 & \ {\tilde\Psi}({\h x}^{k+1},{\h z}^{k+1},{\h u}^{k+1}, \delta)-\tilde\theta_k f(K{\h x}^{k+1})\nn\\
\le &
-{\tilde c}_1\|{\h x}^k-{\h x}^{k+1}\|^2-{\tilde c}_2\|{\h u}^k-{\h u}^{k+1}\|^2
-{\tilde c}_3\|{\h z}^{k}-{\h z}^{k+1}\|^2,
 \end{align*}
 where ${\tilde c}_{1}:=\nu$, ${\tilde c}_{2}:=\frac{\delta(2-\beta)}{2\beta}$ and ${\tilde c}_3:=\frac{1}{4\ell}.$
  The remaining proofs are similar to Theorems \ref{subsequentialtoStat}, \ref{subsequential}, and  \ref{theo8}, thus omitted here.
  \end{proof}

Another interesting question is to see {\it what happens if we replace the updating step of ${\h z}^{k+1}$ in the conceptual algorithm FSPS
(\ref{Z-FSPSO})
 with the following step }
\begin{equation}\label{ProxZStep}
{\h z}^{k+1} = \arg\min_{\h z} \left\{ g^*(\h z) - \langle A{\h x}^{k+1}, {\h z} \rangle + \frac{\alpha_k}{2} \|\h z- {\h z}^k\|^2 \right\},
\end{equation}
where $\alpha_k>0$. We call this variant  as P-FSPS.
First, we see that if \( g \) is nonsmooth, then P-FSPS can also exhibit a cycling phenomenon, as illustrated by the following example.

      \begin{example}\label{example72}
Consider the problem (\ref{PForm}) for  ${\cal S}={[0,1]^2} \subseteq \R^2$, $A=\frac{1}{2}I,\; K=2 I$ where $I$ is the identity mapping, and $g,h,f:\mathbb{R}^2 \rightarrow \mathbb{R}$ given by
$g({\h x}) = \|{\h x}\|_{1}$, $h({\h x}) = \frac{1}{2}\|{\h x}\|^2+\frac{1}{2}{\h e}^\top {\h x}+\frac{3}{2}$ and  $f({\h x}) = \frac{1}{2}{\h e}^\top {\h x}+1$. Let $\beta=1$,   $\delta_k \equiv \delta = \frac{1}{2}$ for $k \geq 0$,  ${\tilde\theta_0}:=\frac{3}{2}$ and ${\h z}^0 =(1,1)^\top,\;{\h u}^0 = {\h x}^0:= (1,0)^\top$.
For any $\alpha_k>0$, P-FSPS generates a sequence $({\h x}^k, {\h y}^k, {\h z}^k, {\h u}^k)$ such that for all $k \geq 0$
$$ {\h u}^k = {\h x}^k=\left\{\begin{array}{rl} (0,1)^\top, & \mbox{if} \ k \ \mbox{is odd},\\
(1,0)^\top, & \mbox{if} \ k \ \mbox{is even}, \end{array}
\right. \quad {\h y}^k=\frac{1}{2}{\h e}, \; \; {\h z}^k ={\h e} \quad \mbox{and} \quad {\tilde\theta_k}=\frac{3}{2}.$$
Direct verification shows that neither $(1,0)^\top$ nor $(0,1)^\top$ is a limiting lifted stationary point of Example \ref{example72}.
\end{example}
We also note that, if $g$ is $\ell$-smooth, one can also develop an adapted version of P-FSPS such that it converges to an exact limiting lifted stationary point under the same assumptions in Theorem \ref{Excttheo8}. We omit the details for simplicity's sake.

\vspace{-0.1cm}

\section{Numerical results}\label{sec8}
We present numerical results to demonstrate the efficacy of the proposed algorithmic framework. All algorithms are implemented using MATLAB R2016a and executed on a desktop running Windows 10 equipped with an Intel Core i7-7600U CPU processor (2.80GHz) and 16GB of memory. Subsequently, all numerical results are conducted for the reformulated problems as indicated in Remark \ref{constr}.

\vspace{-0.1cm}

\subsection{Implementation details}\label{subsec71}

 In Algorithm \ref{subx:admm:Bes}, prior knowledge of $L_{\nabla h}$ is required, and the selection of $\delta_{k}$ might be conservative.
To enable larger step sizes $\delta_{k}$ and reduce the reliance on the unknown parameter $L_{\nabla h}$, we propose an adaptation of Algorithm \ref{subx:admm:Bes} by integrating a nonmonotone line search strategy \cite{WNF09}, denoted as Adaptive FSPS-{nls} (Algorithm \ref{A-FSPS-MLS}).

{\small \begin{algo}[Adaptive FSPS algorithm  with {\it nonmonotone line search}]\label{A-FSPS-MLS} Let $0<\beta<2$, $\nu>0$, $0<q<1$, $\delta_0>0$, $\gamma_0=1$, and $\varepsilon>0$, $\eta > 1$, $0<\mu<1,\; c>0$, $T, \ell, t\in{\mathbb N}$. Let $({\h x}^0, {\h u}^0)$ be a given starting point.  We use {\text{\rm MaxIt}} to indicate the maximal number of iterations.
\vspace{-0.3cm}
\begin{align*}
        &\text{For}\  k=0 : {\rm MaxIt} \ \text{do}\\
		& \ \ \ \text{Set}\  \gamma_{k,0}:=\gamma_k.\\
		& \ \ \ \text{For} \ j = 0:\ell -1 \ \text{do}\\
        & \ \ \ \ \ \ \ \ \text{Set} \ \gamma_{k,j} :=\gamma_{k,0} q^j.\\
		& \ \ \ \ \ \ \ \ {Set} \ {\h{z}}^{k+1,j}:=\prox_{g^*/\gamma_{k,j}}\left(\frac{A{\h x}^{k}}{\gamma_{k,j}}\right).\\
		& \ \ \ \ \ \ \ \ \text{If}\  \theta_{k+1}:=\frac{{\rm\Psi}({\h x}^{k},{\h z}^{k+1,j},{\h u}^{k},\delta_k,\gamma_{k,j})}{f(K{\h x}^{k})}>0, \ \text{then}\\
		& \  \ \ \ \ \ \ \ \ \ \ \ \text{Update} \ \gamma_{k} :=\gamma_{k,j},\  {\h z}^{k+1}:={\h z}^{k+1,j}.\\
         & \ \ \ \ \ \ \ \ \ \ \ \ \text{Break}\\
        & \ \ \ \ \ \ \ \ \text{End If}\\
        &\ \ \ \text{End For} \\
		&\ \ \ \text{Set}\ \delta_{k,0} := 2\nu +L_{\nabla h}+ \frac{2\|A\|^2}{\gamma_{k}}.\\
		&\ \ \ \text{Choose}\ {\h y}^{k+1}\in \partial f(K{\h x}^k).\\
		&\ \ \ \text{Set} \ {\h d}^{k+1} := \theta_{k+1}K^* {\h y}^{k+1}-\nabla h({\h x}^k)-A^* {\h z}^{k+1}.\\
		& \ \ \ \text{For} \  s = 0 : t-1\\
		& \ \ \ \ \ \ \ \ \text{Set} \  \delta_{k,s} := \mu\eta^{s}\delta_{k,0}.\\
		&\ \ \ \ \ \ \  \ \text{Set} \ \tilde{\h x}^{k+1}:={\text{\rm Proj}}_{\cal S}\left({\h u}^k+\displaystyle{\frac{\h d^{k+1}}{\delta_{k,s}}}\right).\\
		&\ \ \ \ \ \ \ \  \text{If} \ F(\tilde{\h x}^{k+1})\leq \max\limits_{[k-T]_{+}\leq j\leq k}F(\h x^{j}) - \frac{c}{2}||\h x^{k}-\tilde{\h x}^{k+1}||^{2},  \ \text{then}\\
		&\ \ \ \ \ \ \ \ \ \ \  \ \text{Update} \  \h x^{k+1} := \tilde{\h x}^{k+1}.\\
        & \ \ \ \ \ \ \ \ \ \ \ \ \text{Break}\\
		& \ \ \ \ \ \ \ \ \ \text{End If}.\\
		& \ \ \ \text{End For}.\\
		&\ \ \ \text{Update}\ {\h u}^{k+1}:={\h u}^k-\beta({\h u}^k-{\h x}^{k+1}).\\	
        &\ \ \ \text{Update}\ \gamma_{k+1}:=\gamma_k,\ \delta_{k+1}:= \delta_k.\\
        &\ \ \ \ \text{If}\ \|{\h z}^{k+1}\|>\min\left(\frac{\varepsilon}{\gamma_{k}},\sqrt{\frac{2\varepsilon}{\gamma_{k}}}\right),\ \text{then}\\
		&\ \ \ \ \ \ \ \ \text{Update}\ \gamma_{k+1} :=\gamma_k q,\  \delta_{k+1} := 2\nu +L_{\nabla h}+ \frac{2\|A\|^2}{\gamma_{k+1}}.\\
		&\ \ \ \ \text{End If}\\
&\text{End For}
\end{align*}
\end{algo}}

\vspace{-0.1cm}
\subsection{Limited-angle CT reconstruction}\label{subsec72}

We solved the  limited-angle CT reconstruction problem  (\ref{CTmodel}) by comparing Adaptive FSPS-nls with the Extrapolated Proximal Subgradient algorithm (e-PSA) from \cite{BDL}, and the Proximity Gradient Subgradient algorithm with Backtracked Extrapolation (PGSA\_{BE}) from \cite{LSZ}. We set $\tau=0.1$ and $p=2$ in (\ref{CTmodel}) throughout the numerical tests. Each algorithm was initialized with the zero vector (with a safeguard mechanism of computing the denominator of  (\ref{PForm})
 via $\max(\|\nabla{\h x}\|_2,{\text{eps}})$) and used the same  stopping criterion defined by:
\begin{eqnarray}\label{stopC}
\vspace{-0.16cm}
\frac{\|{\h x}^{k+1}-{\h x}^k\|}{\max\{{\text{eps}},\|{\h x}^k\|\}}< 10^{-6} \;\;\mbox{or}\;\;k> {\text{MaxIt}}, \vspace{-0.2cm}\end{eqnarray}
where ${\text{eps}}$ represents the machine precision. We also adopted a {\it two-stage approach with a warm start strategy}, where the last iterate of the first stage served as the initial point for the second stage.
\footnote{The rationale behind employing a two-stage approach with different maximum iteration numbers was twofold: the initial stage, with a smaller maximum iteration limit, aims to explore a wider search space to identify a potential vicinity that contains a promising stationary point.  The subsequent stage, characterized by a larger maximum iteration number, focuses on a more refined local search, striving for increased precision in pinpointing the stationary point.} The warm start will be beneficial for solving the imaging processing problem, but it clearly also requires careful parameter tuning for two phases.

When implementing Adaptive FSPS-nls,
we set $f(\h {x}) := \| \h {x} \|_2$, $A = K = \nabla$, and followed Remark \ref{constr} by setting $g(\h {x}): = \tau \| \h {x} \|_1 + \frac{s}{2} \| \h {x} \|^2$ and $h(\h {x}) := \frac{1}{2} \| P \h {x} - \h {f} \|^2 - \frac{s}{2} \| A \h {x} \|^2$ where $s = 0.1$.
 The superscripts $^{(1)}$ and $^{(2)}$ represent the stage one and stage two, respectively. The parameter settings were
 $\beta^{(1)} = 1.1$, $\beta^{(2)} = 1.45$, $\nu^{(1)} = 2010$, $\nu^{(2)} = 0.1$, $\mu^{(1)} =\mu^{(2)}= 0.4$, $\eta^{(1)}=\eta^{(2)} = 1.5$, $q^{(1)}=q^{(2)} = 0.999$, $T^{(1)}=T^{(2)}  = 5$,
 $c^{(1)} =c^{(2)} = 1\text{e-4}$, $t^{(1)}=t^{(2)} = 250$, $\ell^{(1)}=\ell^{(2)}  = 1000$, $\text{MaxIt}^{(1)} = 50$,
       $\text{MaxIt}^{(2)} = 5000$, and $\varepsilon^{(1)}=\varepsilon^{(2)}=1e-6$.

When applying PGSA\_BE (Algorithm 1 in \cite{LSZ}), we set
$
f(\h{x}) := \tau \| \nabla \h{x} \|_1,\  h(\h{x}): = \frac{1}{2} \| P \h{x} - \h{f} \|^2, \ g(\h{x}) := \| \nabla \h{x} \|_2.
$
The inner loop amounts to solving in each iteration \vspace{-0.2cm}
\begin{eqnarray}\label{pgsasub}
\h{x}^{k+1} = \arg\min_{\h{x} \in \mathcal{B}} \left[ \tau \| \nabla \h{x} \|_1 + \frac{1}{2\alpha} \| \h{x} - \h{q}^k \|^2 \right],
\end{eqnarray}
with \( \h{q}^k = \h{u}^{k+1} - \alpha P^* (P \h{u}^{k+1} - \h{f}) + \alpha c_k \frac{\nabla^* (\nabla \h{x}^k)}{\| \nabla \h{x}^k \|_2} \),
${\h u}^{k+1}={\h x}^k+\beta_k({\h x}^k-{\h x}^{k-1})$ and \( c_k = \frac{f(\h{x}^k) + h(\h{x}^k)}{g(\h{x}^k)} \).
We applied ADMM to (\ref{pgsasub}) by introducing  \( \nabla \h{x} = \h{y} \) and \( \h{x} = \h{z} \), and with  \( \rho_1 \) and \( \rho_2 \) being the penalty parameters. For the outer loop parameters we set
$
\ell^{(1)} = \ell^{(2)} = 0, \beta_k^{(1)} = \beta_k^{(2)} \equiv 0.1,  \alpha^{(1)} = 0.0015, \alpha^{(2)} = 0.001,
$
$\varepsilon^{(1)} = \varepsilon^{(2)} = 1\text{e-3} \text{ (in the backtracking condition)}$,
$\text{MaxIt}^{(1)} = 50, \ \text{MaxIt}^{(2)} = 5000.$
For the inner loop parameters we set $\text{Inner\_tol}^{(1)} = \text{Inner\_tol}^{(2)} = 1\text{e-6},$
and $
\text{Inner\_MaxIt}^{(1)} = 1000, \;  \text{Inner\_MaxIt}^{(2)} = 200,  \rho_1^{(1)} = \rho_1^{(2)} = 1\text{e-4}, \ \rho_2^{(1)} = \rho_2^{(2)} = 1\text{e-2}$.

When applying e-PSA (Algorithm 4.1 in \cite{BDL}), we set $f^n(\h {x}) := \tau \| \nabla \h {x} \|_1$, $f^s(\h {x}): = \frac{1}{2} \| P \h {x} - \h {f} \|^2$, and $g(\h {x}) := \| \nabla \h {x} \|_2$. Due to the absence of boundedness condition (BC),  $\bar\mu=\bar\kappa=0$,
 and so, $\kappa_k=\mu_k=0$ and ${\h u}^k={\h v}^k={\h x}^k$ for all $k \geq 0$.
 The inner loop amounts to solving in each iteration
\begin{eqnarray}\label{epsasub}
\h{x}^{k+1} = \arg\min_{\h{x} \in \mathcal{B}} \left[ \tau \| \nabla \h{x} \|_1 + \frac{1}{2\tau_k} \| \h{x} - \h{p}^k \|^2 +\frac{\ell}{2}\|{\h x}-{\h x}^k\|^2\right],
\end{eqnarray}
with $\h p^k={\h x}^k+\tau_k\theta_k \frac{\nabla^* \nabla {\h x}^k}{\|\nabla \h x^k\|}-\tau_k P^*(P{\h x}^k-{\h f})$
and \( \theta_k = \frac{f^n(\h{x}^k) + f^s(\h{x}^k)}{g(\h{x}^k)} \).
We solved (\ref{epsasub}) also via ADMM, with  \( \rho_1 \) and \( \rho_2 \) being the penalty parameters. For the outer loop parameters we set $\beta^{(1)}=\beta^{(2)}=0$, $\ell^{(1)}=\ell^{(2)}=\|P{\h x}^{\text{true}}\|/\|{\h x}^{\text{true}} \|$,
$\tau_k^{(1)}=\tau_k^{(2)}\equiv760$, and $\text{MaxIt}^{(1)} = 50, \ \text{MaxIt}^{(2)} = 5000.$
For the inner loop parameters we set $\text{Inner\_tol}^{(1)} = \text{Inner\_tol}^{(2)} = 1\text{e-6},$
and $\text{Inner\_MaxIt}^{(1)} = 1000,\ \text{Inner\_MaxIt}^{(2)} = 200,  \ \rho_1^{(1)} = \rho_1^{(2)} =\rho_2^{(1)} = \rho_2^{(2)} = 1\text{e-2}.$

We assessed performance based on two metrics:
the root mean squared error (RMSE) \cite{WTNL} and
the overall structural similarity index (SSIM) \cite{WBSS}.
We conducted tests on parallel beam CT reconstruction of the Shepp-Logan phantom using projection ranges of \(90^\circ\), \(120^\circ\), and \(150^\circ\). We evaluated both noiseless and noisy scenarios where the Gaussian noise with zero mean and standard deviation (SD) \(\sigma=0.001, \ 0.005\).  The performance of the three algorithms is summarized in Table \ref{Tab2}. The results of the computation indicate that the adaptive FSPS-nls has better performance in terms of SSIM, RMSE and CPU time (in seconds) compared to the recently introduced double-loop algorithms PGSA\_BE and e-PSA.

\begin{sidewaystable}
\caption{Parallel beam CT reconstruction of the Shepp-Logan phantom for different projection ranges.}
\label{Tab2}
\centering
\begin{tabular}{c|c|ccc|ccc|ccc|cc}
\hline
\multirow{2}{*}{SD of}& \multirow{2}{*}{Range} & \multicolumn{3}{c|}{e-PSA \cite{BDL}}  & \multicolumn{3}{c|}{PGSA\_BE \cite{LSZ}}               & \multicolumn{3}{c|}{Adaptive FSPS-nls}        & \multicolumn{2}{c}{Time Ratios}       \\
\multirow{2}{*}{ \small noise}& \multirow{3}{*}{}           & \multicolumn{3}{c|}{}                  &\multicolumn{3}{c|}{}   &\multicolumn{3}{c|}{} & \multicolumn{2}{c}{}\\
\cline{3-13}
 &               & SSIM   & RMSE     & T$_1$     & SSIM & RMSE      & T$_2$    & SSIM   & RMSE     & T$_3$   &T$_3$/T$_1$ & T$_3$ / T$_2$ \\
\hline
\multirow{3}{*}{0}
 & 90$^{\circ}$  & 0.9946 & 3.38e-04 & 2.95e+03 & 0.9958 & 3.65e-04 & 3.09e+03 & 1.0000 & 2.39e-05 & 3.67e+01 & 1.25\% & 1.19\% \\
 & 120$^{\circ}$ & 0.9970 & 1.98e-04 & 2.85e+03 & 0.9999 & 3.37e-05 & 2.51e+03 & 1.0000 & 1.21e-05 & 3.28e+01 & 1.15\% & 1.31\% \\
 & 150$^{\circ}$ & 0.9991 & 1.06e-04 & 2.71e+03 & 1.0000 & 1.95e-05 & 1.39e+03 & 1.0000 & 8.47e-06 & 3.17e+01 & 1.17\% & 2.28\% \\
\hline
\multirow{3}{*}{0.001}
 & 90$^{\circ}$  & 0.9946 & 3.38e-04 & 2.95e+03 & 0.9955 & 3.76e-04 & 1.64e+03 & 0.9999 & 2.94e-05 & 1.32e+01 & 0.45\% & 0.8\% \\
 & 120$^{\circ}$ & 0.9970 & 1.97e-04 & 1.80e+03 & 0.9999 & 3.55e-05 & 1.47e+03 & 1.0000 & 1.49e-05 & 1.41e+01 & 0.78\% & 0.96\% \\
 & 150$^{\circ}$ & 0.9991 & 1.08e-04 & 2.99e+03 & 1.0000 & 2.27e-05 & 1.40e+03 & 1.0000 & 1.10e-05 & 1.53e+01 & 0.51\% & 1.10\% \\
\hline
\multirow{3}{*}{0.005}
 & 90$^{\circ}$ & 0.9944  & 3.21e-04 & 2.07e+03 & 0.9865 & 6.22e-04 & 1.62e+03 & 0.9992 & 1.12e-04 & 4.08e+01 & 1.97\% & 2.52\% \\
 & 120$^{\circ}$ & 0.9967 & 2.15e-04 & 3.02e+03 & 0.9994 & 9.83e-05 & 2.18e+03 & 0.9996 & 8.65e-05 & 1.02e+02 & 3.39\% & 4.71\% \\
 & 150$^{\circ}$ & 0.9984 & 1.54e-04 & 1.14e+03 & 0.9996 & 8.35e-05 & 1.42e+03 & 0.9996 & 8.13e-05 & 1.30e+01 & 0.92\% & 1.15\% \\
\hline
\end{tabular}
\end{sidewaystable}

\subsection{Robust Sharpe ratio type minimization problem}\label{subsec73}

 We tested Adaptive FSPS-nls also on the robust sharp-ratio minimization problem (\ref{muleq}), and compared it with
PGSA\_BE,  e-PSA  and the
Dinkelbach's  method with Surrogation (DLS) \cite[Algorithm 7.2.7]{CuiPang}.
 The data $((r_i)_{i=1}^{m_1},({\h a}_i)_{i=1}^{m_1},{(C_i)}_{i=1}^{m_2})$ were generated as follows:
(1) each vector ${\h a}_i$ was generated such that each entry is drawn from a uniform distribution over the interval $[0,1]$;
(2)  ${r_i}$ was set to be greater than $\|{\h a}_i\|_{\infty}$;
(3) each matrix $C_i$ was generated such that each eigenvalue conforms to a uniform distribution over the interval $[10^{-3},1+10^{-3}]$.

We measured these algorithms's performance based on several metrics: the objective function value ${\tt obj}$, the infeasibility ${\tt infeas}:=\|\max(-{\h x},0)\|_1+|\|\h x\|_1-1|$, and the lifted limiting stationarity residual
$${\tt stat}:={\text{dist}}\big( {\bf 0}, \ \left(A^* \partial g(A{{\h x}})+\nabla h({\h x})+\partial \iota_{\cal S}({{\h x}})\right) f(K{\h x})-(g(A{\h x})+h({\h x}))K^* \partial f(K{{\h x}})\big),$$
 evaluated at the last iterate.

We also used (\ref{stopC}) as a stopping criterion.
 When implementing  Adaptive FSPS-nls, we set $f$, $\h r$, $A$, and $K$ as in Section \ref{sec1}, and set $g(\h x):=\|{\h r}-{\h x}\|_{\infty}+\frac{s}{2}\|{\h x}\|^2$ and $h(\h x):=-\frac{s}{2}\|A{\h x}\|^2$ with $s=0.01$, by following Remark \ref{constr}. We set the algorithm parameters as $\ell:=100$, $L_{\nabla h}:=s \|A^*A\|$, $\nu:=1$, $\eta:=1.15$, $q:=0.999$, $\mu:=0.005$, $c:=10^{-4}$, $T:=5$, $\delta_0:=2\nu + L_{\nabla h}+2\|A\|^2$, $t:=250$, ${\text{\rm MaxIt}} := 500$, $\varepsilon:=1e-6$, and  $\beta:=1.6$.
When implementing PGSA\_BE, we defined
$
f(\h{x}) := \max_{1 \le i \le m_1}\{r_i - \h{a}_i^\top \h{x}\}, h({\h x}):=0$, and $g(\h{x}) :=\max_{1 \le i \le m_2} \h{x}^\top C_i \h{x}.
$
The inner loop amounts to solving in each iteration
\begin{eqnarray}\label{SRsub}
\vspace{-0.13cm}
\h{x}^{k+1} = \arg\min_{\h{x} \in  \Delta}\left[ \max_{1\le i\le m_1} \{r_i-{\h a}_i^\top {\h x}\}+\frac{1}{2\alpha}\|{\h x}-{\h u}^{k+1}-\alpha c_k {\h y}^k\|^2\right],\vspace{-0.16cm}
\end{eqnarray}
with ${\h y}^k\in \partial g(\cdot)({\h x}^k)$ and ${\h u}^{k+1}={\h x}^k+\beta_k({\h x}^k-{\h x}^{k-1})$. The inner loops of both  e-PSA and DLS amounts to solving in each iteration a similar problem as (\ref{SRsub}).

For fair comparisons, we solved the inner loop subproblems for all these double-loop algorithms via ADMM. We used
\(\text{MaxIt} = 500\) for both inner and outer loops. In addition, we used for
e-PSA as outer loop parameters
$\beta = 0, \ \tau_k \equiv 0.5$ and as
inner loop parameters
$\rho_1 = 0.01, \ \rho_2 = 0.5$; we used for
PGSA\_BE as outer loop parameters
$\beta_k \equiv 0.01, \ \alpha = 0.1, \ \varepsilon = 1 \times 10^{-3} \text{ (in the backtracking)}$ and as inner loop parameters
$\rho_1 = 0.01, \ \rho_2 = 0.5$, and we used for DLS as inner loop parameters
$\rho_1 = \rho_2 = 0.01.$
We conducted numerical tests by setting $(n, m_1, m_2)$ to $(100, 5, 20)$, $(100, 20, 5)$, $(100, 20, 20)$, $(400, 20, 10)$, $(400, 10, 20)$, and $(400, 20, 20)$. We performed 50 trials for each configuration. The average values of the considered performance metrics, along with the CPU time (in seconds), are reported in Table \ref{Spop}.

As  observed, FSPS-{nls} demonstrates superior performance compared to  e-PSA, PGSA\_{BE} and DLS. Specifically, it achieves a smaller ${\tt stat}$ than PGSA\_{BE} and e-PSA, while attaining a comparable ${\tt stat}$ to DLS. Moreover, FSPS-{nls} yields a smaller ${\tt infeas}$ and requires less computation time, while achieving  comparable ${\tt obj}$ values.

\begin{table}[t]
	\centering
	{ \vskip -2mm\caption{Adaptive FSPS-nls versus double-loop algorithms for robust sharp-ratio type optimization  Problem}\label{Spop}
	\vskip -2mm\begin{tabular}{cccccc}
		\hline
		\multirow{2}{*}{$(n, m_1,m_2)$ } &     & \multirow{2}{*}{FSPS-nls}   & \multirow{2}{*}{e-PSG}     & \multirow{2}{*}{PGSA\_BE}& \multirow{2}{*}{DLS}\\
		                                 &                                &                           &                  &   &\\
		 \hline
		\multirow{4}{*}{\begin{tabular}[c]{@{}c@{}}\\ (100, 5, 20)\end{tabular}}
		 &{\tt obj}    & 1.52e+00 	 	 & 1.36e+00     & 1.24e+00  & 1.50e+00\\
         &{\tt infeas} & 4.87e-09 	  	 & 3.07e-07     & 5.37e-07 	& 4.28e-07\\
         &{\tt stat}   & 2.53e-07        & 1.64e-02     & 1.18e-02  & 1.66e-07\\
         &{ CPU}    & 2.82e-02 	     & 1.06e+00 	& 4.74e+00  & 3.33e-02\\

\hline
		\multirow{4}{*}{\begin{tabular}[c]{@{}c@{}} \\ (100, 20, 5)\end{tabular}}
   &{\tt obj}         &1.76e+00           &1.79e+00 	&1.78e+00 	&1.74e+00 \\
   &{\tt infeas}      &5.90e-09 		  &5.23e-07     &5.25e-07   &4.01e-07 \\
   &{\tt stat}        &3.20e-07  	 	  &6.34e-03 	&2.35e-07 	&3.06e-07\\
   &{ CPU}         &1.11e-02 		  &1.03e+00 	&3.83e+00   &3.31e-02\\

\hline		
		\multirow{4}{*}{\begin{tabular}[c]{@{}c@{}}\\ (100, 20, 20)\end{tabular}}
 &{\tt obj}         &1.68e+00 	  	    &1.62e+00 	 &1.59e+00 	&1.62e+00\\
 &{\tt infeas}      &4.23e-09 	  	    &4.20e-07 	 &5.25e-07 	&3.51e-07\\
 &{\tt stat}        &4.07e-07  	        &6.27e-03 	 &1.90e-07 	&3.13e-07\\
 &{ CPU}         &1.36e-02 	 	    &1.04e+00 	 &4.00e+00 	&5.34e-02\\

\hline
	\multirow{4}{*}{\begin{tabular}[c]{@{}c@{}}\\ (400, 20, 10)\end{tabular}}
 &{\tt obj}          &1.88e+00 	    &1.84e+00 	&1.85e+00 	&1.82e+00\\
 &{\tt infeas}       &3.24e-09 	 	&4.40e-07 	&3.43e-07 	&4.52e-07\\
 &{\tt stat}         &6.30e-05    	&6.33e-05 	&6.57e-05 	&5.60e-05\\
 &{ CPU}          &2.68e-01 	  	&4.45e+00 	&1.64e+01 	&7.63e-01\\

 \hline
	\multirow{4}{*}{\begin{tabular}[c]{@{}c@{}}\\ (400, 10, 20)\end{tabular}}
 &{\tt obj}           &1.70e+00 	 	&1.71e+00   &1.66e+00 	&1.66e+00\\
 &{\tt infeas}        &5.40e-09 	 	&5.30e-07 	&3.89e-07 	&4.71e-07\\
 &{\tt stat}          &3.02e-05  	 	&5.41e-03 	&2.39e-05 	&2.61e-05\\
 &{ CPU}           &4.60e-01 		&6.99e+00 	&2.36e+01 	&8.35e-01 \\
 \hline
	\multirow{4}{*}{\begin{tabular}[c]{@{}c@{}}\\ (400, 20, 20)\end{tabular}}
	 &{\tt obj}           &1.84e+00 	  	&1.78e+00 	 &1.78e+00 	 &1.83e+00\\
     &{\tt infeas}        &4.63e-09 	 	&3.15e-07 	 &3.67e-07 	 &4.34e-07\\
     &{\tt stat}          &4.32e-05  	 	&3.39e-05 	 &3.65e-05 	 &3.62e-05\\
     &{ CPU}           &4.68e-01 	 	&7.18e+00 	 &2.13e+01 	 &1.48e+00\\

\hline
	\end{tabular}}
\end{table}

\section{Conclusions}\label{sec9}

 The paper focuses on a class of fractional programs characterized by linear compositions with nonsmooth function in both the numerator and denominator. We develop a single-loop full-splitting algorithm, Adaptive FSPS, to overcome the challenges in computing the proximal point of the linear composition with the nonsmooth component in the numerator. Our contributions include establishing the global convergence of the proposed algorithm toward an exact/approximate lifted stationary point under the Kurdyka-\L ojasiewicz (KL) property when \( g \) is smooth/nonsmooth, without imposing full-row rank assumptions on the linear operators. We explain the rationale behind the convergence to an approximate lifted stationary point and construct counterexamples to show that pursuing an exact solution can lead to divergence when \( g \) is nonsmooth. We propose a practical version of Adaptive FSPS-nls that incorporates a nonmonotone line search to improve the performance. Finally, we demonstrate the superiority of Adaptive FSPS-nls over some existing state-of-the-art methods for solving two concrete applications.

\appendix
\section{Any  accumulation point of the sequence generated by the FSPS may fail to be a lifted stationary point even when  $\gamma_k \downarrow 0$}\label{counter1}

 Consider the counter-example of Example \ref{ex6dot10}.
 For \(\beta = 1\), \(\gamma_k := \frac{1}{k+1}\), \(\delta_k \equiv 1\), \(\theta_0 := 2\), and \({\h z}^0 = {\h u}^0 = {\h x}^0 := (1, 0)^\top\), FSPS generates a sequence \(\{({\h x}^k, {\h y}^k, {\h z}^k, {\h u}^k)\}\).
The sequence $\{\h x^k\}$ has two accumulation points: \((1, 0)^\top\) and \((0, 1)^\top\).
Indeed, neither \((1, 0)^\top\) nor \((0, 1)^\top\) is a limiting lifted stationary point of Example \ref{ex6dot10}. We provide the details in the following lemma.

\begin{lemma}\label{lemma1}
Let the sequence $\{({\h x}^k, {\h y}^k, {\h z}^k, {\h u}^k)\}$ be generated by  FSPS (\ref{FSPSO}) for solving Example \ref{ex6dot10}
with \(\beta = 1\), \(\gamma_k := \frac{1}{k+1}\), \(\delta_k \equiv 1\), \(\theta_0 := 2\), and \({\h z}^0 = {\h u}^0 = {\h x}^0 := (1, 0)^\top\).
Then, we have ${\h y}^k={\h e}$ for all $k \geq 1$ and the following statements hold:
\begin{itemize}
\item[(i)]
$$ \large{\textcircled{\small{1}}_k:} \; \; {\h z}^k =\left\{\begin{array}{rl} (0,1)^\top, & \mbox{if} \ k \ \mbox{is odd},\\
(1,0)^\top, & \mbox{if} \ k \ \mbox{is even}, \end{array}
\right. $$
\item[(ii)]
$$ \large{\textcircled{\small{2}}_k:} \; \;{\h x}^k =\left\{\begin{array}{rl} (0,\ \theta_{k-1}-1)^\top, & \mbox{if} \ k \ \mbox{is odd},\\
(\theta_{k-1}-1,\ 0)^\top, & \mbox{if} \ k \ \mbox{is even}. \end{array}
\right. $$
where $\theta_k$ is given by \begin{eqnarray}\label{kk}\;\;\;\; \large{\textcircled{\small{3}}_k:}\; \;\theta_{k}=\frac{\theta_{k-1}-1+(0.5*(\theta_{k-1}-1)^2+(\theta_{k-1}-1)+0.5)-\frac{1}{2k}}{\theta_{k-1}-1/2}.\end{eqnarray}
\item[(iii)] $\lim_{k \rightarrow \infty}\theta_k=2$, and hence, the sequence $\{\h x^k\}$ has two accumulation points: \((1, 0)^\top\) and \((0, 1)^\top\).
\end{itemize}
\end{lemma}
\begin{proof}  First, we define a sequence $\{b_k\}$ via the following recurrence formula: $b_0=1$ and for all $k \ge 0$ $b_{k+1}=\frac{\frac{1}{2}b_k^2+b_k-\frac{1}{2(k+1)}}{b_k+1/2}.$
For this sequence, we  first use {\it mathematical induction} to see that
\begin{eqnarray}\label{rangeb} 1/2<b_k <1, \;\forall k\ge 1.\end{eqnarray}
By direct calculations, we see that 
$b_1=2/3$, $b_2=23/42$, $b_3=936.5/1848$ and $b_4=\frac{936.5^2/(1848*2)+705.5}{1860.5}$. Thus, (\ref{rangeb}) holds with $k=1,2,3,4$. Suppose that (\ref{rangeb}) holds with $k=k_0$ for some $k_0 \ge 4$, that is, $1/2<b_{k_0}<1$. We now show that (\ref{rangeb}) holds with $k=k_0+1$. To see this,   we first note that  $b_{k_0+1}=\frac{1}{2}b_{k_0}+\frac{3}{4}-\frac{\frac{3}{8}+\frac{1}{2(k_0+1)}}{b_{k_0}+1/2}.
$
Define a one-variable function $f(x):=\frac{1}{2}x+\frac{3}{4}-\frac{\frac{3}{8}+\frac{1}{2(k_0+1)}}{x+1/2}$. Direct verification shows that
$f$
is an increasing function. So,
$$b_{k_0+1}=f(b_{k_0})\ge f(1/2)=5/8-\frac{1}{2(k_0+1)}>1/2,$$
where the last strict inequality holds as $k_0 \ge 4$.
Moreover, as $b_{k_0} < 1$,
$
b_{k_0+1}=f(b_{k_0})\le f(1) <1.
$
Thus, (\ref{rangeb})  holds.

Next, we show the main results of this lemma.
Clearly, from the definition of $f$ and the construction, ${\h y}^k={\h e}$ for all $k \geq 1$.

[Proof of {\rm (i)} \& {\rm (ii)}] We use  mathematical induction to verify $\large{\textcircled{\small{1}}_k}$, $\large{\textcircled{\small{2}}_k}$ and
$\large{\textcircled{\small{3}}_k}$ hold for all $k \ge 1$. A direct verification shows that the statements of $\large{\textcircled{\small{1}}_k}$, $\large{\textcircled{\small{2}}_k}$ and $\large{\textcircled{\small{3}}_k}$ hold for $k=1,2$; Suppose that $\large{\textcircled{\small{1}}_k}$, $\large{\textcircled{\small{2}}_k}$ and $\large{\textcircled{\small{3}}_k}$ hold
for $k \le k_0$ with $k_0 \ge 2$.
Using (\ref{rangeb}) with $b_{k_0}=\theta_{k_0}-1$, we see that
$3/2<\theta_{k_0}<2$. Using the update formula of $\h x^{k+1}$ in (\ref{FSPSO}), a direct verification shows that $\large{\textcircled{\small{2}}_k}$ holds with $k=k_0+1$. Note from the update formula of $\h z^{k+1}$ in (\ref{FSPSO}) that ${\h z}^{k_0+1}:={\text{\rm Proj}}_{{\cal B}^\infty_1}(\frac{(\theta_{k_0}-1,\ 0)^\top}{1/(k_0+1)})$ or
${\h z}^{k_0+1}:={\text {\rm  Proj}}_{{\cal B}^\infty_1}(\frac{(0,\ \theta_{k_0}-1)^\top}{1/(k_0+1)})$ where ${\cal B}^\infty_1$ is the unit ball defined by the $\ell_\infty$-norm.
Since $3/2<\theta_{k_0}<2$, we have
$\large{\textcircled{\small{1}}}_{k}$ holds with $k=k_0+1$. Finally, using the  update formula of $\theta_{k+1}$ in (\ref{FSPSO}),  $\large{\textcircled{\small{3}}}_{k}$ with $k=k_0+1$ also follows.

[Proof of {\rm (iii)}] To see {\rm (iii)}, we first establish that
$b_{k+1}\ge b_k \;\mbox{when}\;k\ge 4.$
From the definition of the sequence $\{b_k\}$, this is equivalent to
\begin{eqnarray}\label{eq:use19} \frac{1-\sqrt{1-\frac{4}{k+1}}}{2}\le b_k\le \frac{1+\sqrt{1-\frac{4}{k+1}}}{2}.\end{eqnarray}
Clearly, \eqref{eq:use19} is true with $k=4$ by direct computation. Suppose now \eqref{eq:use19} holds with $k=k_0$ with $k_0 \ge 4$. We now show that \eqref{eq:use19} holds with $k=k_0+1$, that is,
 \begin{eqnarray*}
 \frac{1-\sqrt{1-\frac{4}{k_0+2}}}{2}\overset{(\clubsuit)}{\le } b_{k_0+1}\overset{(\spadesuit)}{\le} \frac{1+\sqrt{1-\frac{4}{k_0+2}}}{2}.\end{eqnarray*}
For $(\clubsuit)$, it holds obviously due to $b_{k}>1/2$ for all $k \ge 1$.  To prove $(\spadesuit)$, recall the one-variable $f$ defined as above. We have $b_{k_0+1}=f(b_{k_0}) \le f(\frac{1+\sqrt{1-\frac{4}{k_0+1}}}{2}),$ where the last inequality follows by the induction hypothesis and the fact that $f$ is increasing. Thus, it remains to show that $f(\frac{1+\sqrt{1-\frac{4}{k_0+1}}}{2})\le\frac{1+\sqrt{1-\frac{4}{k_0+2}}}{2}$.
 By letting $\delta:=\frac{1}{k_0+1}$, $\kappa:=\frac{1}{k_0+2}$ and noting that $\kappa\le\delta$.
 Let $c:=\sqrt{1-4\delta}$, $d:=\sqrt{1-4\kappa}$.
 thus we have
 $\frac{c^2}{4}-1\le \frac{1}{4}cd+\frac{d}{2}-\frac{c}{2}-1.$
 Consequently,
 $1+\frac{\sqrt{1-4\delta}}{4}-\frac{\frac{3}{8}+\frac{\delta}{2}}{1+\frac{1}{2}\sqrt{1-4\delta}}\le \frac{1}{2}+\frac{1}{2}\sqrt{1-4\kappa}.$
 With some elementary calculations, it leads to $f(\frac{1+\sqrt{1-\frac{4}{k_0+1}}}{2})\le\frac{1+\sqrt{1-\frac{4}{k_0+2}}}{2}$.
Therefore, the sequence of $\{b_k\}$ is monotone and bounded, thus $\lim\limits_{k\to+\infty}b_k$ exists.
Consequently,
 $\lim\limits_{k\to+\infty}\theta_k$ exists, and$\lim\limits_{k\to+\infty}\theta_k = 2$.
\end{proof}
\vspace{-0.1cm}

\vspace{-0.5cm}

\end{document}